\numberwithin{equation}{section} 
\newenvironment{pdeq}{ \left\{ \begin{aligned}}{\end{aligned}\right.}
\newcommand{\eqrefsub}[2]{\ensuremath{\eqref{#1}_{#2}}}
\newcommand{\np}[1]{(#1)}
\newcommand{\nb}[1]{[#1]}
\newcommand{\bp}[1]{\big(#1\big)}
\newcommand{\bb}[1]{\big[#1\big]}
\newcommand{\Bp}[1]{\bigg(#1\bigg)}
\newcommand{\Bb}[1]{\bigg[#1\bigg]}
\newcommand{\calf}{{\mathcal F}}
\newcommand{\caln}{{\mathcal N}}
\newcommand{\calp}{{\mathcal P}}
\newcommand{\cals}{{\mathcal S}}
\newcommand{\calt}{{\mathcal T}}
\newcommand{\R}{\mathbb{R}}
\newcommand{\Q}{\mathbb{Q}}
\newcommand{\C}{\mathbb{C}}
\newcommand{\Z}{\mathbb{Z}}
\newcommand{\N}{\mathbb{N}}
\DeclareMathOperator{\e}{e}
\DeclareMathOperator{\Div}{div}
\DeclareMathOperator{\supp}{supp}
\DeclareMathOperator{\dist}{dist}
\DeclareMathOperator{\rot}{rot}
\newcommand{\embeds}{\hookrightarrow}
\newcommand{\ra}{\rightarrow}
\newcommand{\wto}{\rightharpoonup}
\newcommand{\set}[1]{\ensuremath{\{#1\}}}
\newcommand{\setc}[2]{\ensuremath{\{#1 : #2\}}}
\newcommand{\setcl}[2]{\ensuremath{\bigl\{#1 : #2\bigr\}}}
\newcommand{\setcL}[2]{\ensuremath{\biggl\{#1\, :\, #2\biggr\}}}
\newcommand{\ball}{B}
\renewcommand{\restriction}[2]{#1\big | _{#2}}
\newcommand{\proj}{\calp}
\newcommand{\projhm}{\proj_{\Omega_m}}
\newcommand{\grp}{G}
\newcommand{\dualgrp}{\widehat{G}}
\newcommand{\grph}{H}
\newcommand{\torus}{{\mathbb T}}
\newcommand{\transpose}{\top}
\newcommand{\idmatrix}{I}
\newcommand{\Rn}{{\R^n}}
\newcommand{\ddt}{\frac{{\mathrm d}}{{\mathrm d}t}}
\newcommand{\grad}{\nabla}
\newcommand{\pt}{\partial_t}
\newcommand{\dx}{{\mathrm d}x}
\newcommand{\dt}{{\mathrm d}t}
\newcommand{\dxi}{{\mathrm d}\xi}
\newcommand{\dtheta}{{\mathrm d}\theta}
\newcommand{\SR}{\mathscr{S}}
\newcommand{\TDR}{\mathscr{S^\prime}}
\newcommand{\ft}[1]{\widehat{#1}}
\newcommand{\FT}{\mathscr{F}}
\newcommand{\iFT}{\mathscr{F}^{-1}}
\newcommand{\AR}{\mathrm{A}}
\newcommand{\norm}[1]{\lVert#1\rVert}
\newcommand{\norml}[1]{\bigl\lVert#1\bigr\rVert}
\newcommand{\snorm}[1]{{\lvert #1 \rvert}}
\newcommand{\snorml}[1]{{\bigl\lvert #1 \big\rvert}}
\newcommand{\snormL}[1]{{\Bigl\lvert #1 \Big\rvert}}
\newcommand{\WSR}[2]{\mathrm{W}^{#1,#2}} 
\newcommand{\WSRN}[2]{\mathrm{W}^{#1,#2}_0}
\newcommand{\WSRloc}[2]{\mathrm{W}^{#1,#2}_{\mathrm{loc}}} 
\newcommand{\CR}[1]{\mathrm{C}^{#1}}  
\newcommand{\LR}[1]{\mathrm{L}^{#1}}
\newcommand{\lR}[1]{\ell^{#1}}
\newcommand{\LRloc}[1]{\mathrm{L}^{#1}_{\mathrm{loc}}} 
\newcommand{\CRi}{\CR \infty}
\newcommand{\CRci}{\CR \infty_0}
\newcommand{\LRsigma}[1]{\mathrm{L}^{#1}_{\sigma}}
\newcommand{\XreyT}{{\mathcal X_{\rey,\tay}^q}}
\newcommand{\Xreys}{{\mathrm X_{\rey,\tay,s}^q}}
\newcommand{\Xreyk}{{\mathrm X_{\rey,\tay,\perf k}^q}}
\newcommand{\XzeroT}{\mathcal X_{0,\tay}^q}
\newcommand{\YT}{\mathcal Y^q}
\newcommand{\Ys}{\mathrm Y^q}
\newcommand{\nsnonlinb}[2]{#1\cdot\grad #2}
\newcommand{\vvel}{v}
\newcommand{\vpres}{p}
\newcommand{\Vvel}{V}
\newcommand{\Vpres}{P}
\newcommand{\wvel}{w}
\newcommand{\wpres}{\mathfrak{q}}
\newcommand{\twvel}{\widetilde{w}}
\newcommand{\twpres}{\widetilde{\pi}}
\newcommand{\uvel}{u}
\newcommand{\upres}{\mathfrak{p}}
\newcommand{\Uvel}{U}
\newcommand{\tuvel}{\widetilde{u}}
\newcommand{\tupres}{\widetilde{\mathfrak{p}}}
\newcommand{\zvel}{z}
\newcommand{\rotterm}[1]{\np{ \eone\wedge #1 - \nsnonlinb{\eone\wedge x}{#1}}}
\newcommand{\rottermsimple}[1]{ \eone\wedge #1 - \nsnonlinb{\eone\wedge x}{#1} }
\newcommand{\tin}{\text{in }}
\newcommand{\tif}{\text{if }}
\newcommand{\ton}{\text{on }}
\newcommand{\tfor}{\text{for }}
\newcommand{\half}{\frac{1}{2}}
\renewcommand{\epsilon}{\varepsilon}
\renewcommand{\phi}{\varphi}
\newcommand{\rey}{\lambda}
\newcommand{\tay}{\omega}
\newcommand{\per}{\calt}
\newcommand{\iper}{\frac{1}{\per}}
\newcommand{\perf}{\frac{2\pi}{\per}}
\newcommand{\perfs}{\tfrac{2\pi}{\per}}
\newcommand{\eone}{\e_1}
\newcommand{\rotmatrix}{Q}
\newcommand{\charfct}{\mathbbm{1}}
\newcommand{\cutoff}{\chi}
\newcommand{\change}[1]{}
\renewcommand{\eqrefsub}[2]{\eqref{#1}\textsubscript{#2}}
\theoremstyle{plain}
\newtheorem{thm}{Theorem}[section]
\newtheorem{lem}[thm]{Lemma}
\newtheorem{prop}[thm]{Proposition}
\newtheorem{cor}[thm]{Corollary}
\theoremstyle{remark}
\newtheorem{rem}[thm]{Remark}
\begin{document}
\title{On the Oseen-type resolvent problem associated with
time-periodic flow past a rotating body}

\author{Thomas Eiter%
\footnote{%
Weierstrass Institute for Applied Analysis and Stochastics,
Mohrenstra\ss{}e 39, 10117 Berlin, Germany.
Email: {\texttt{thomas.eiter@wias-berlin.de}}} 
}

\maketitle

\begin{abstract}
Consider the time-periodic flow of an incompressible viscous fluid past a body performing a rigid motion
with non-zero translational and rotational velocity.
We introduce a framework of homogeneous Sobolev spaces
that renders the resolvent problem of the associated linear problem 
well posed on the whole imaginary axis.
In contrast to the cases without translation or rotation, 
the resolvent estimates are merely uniform 
under additional restrictions,
and the existence of time-periodic solutions
depends on the ratio of the rotational velocity of the body motion
to the angular velocity associated with the time period.
Provided that this ratio is a rational number,
time-periodic solutions to both the linear
and, under suitable smallness conditions, the nonlinear problem
can be established.
If this ratio is irrational,
a counterexample shows that in a special case 
there is no uniform resolvent estimate
and solutions to the time-periodic linear problem do not exist.
\end{abstract}

\noindent
\textbf{MSC2020:} 
76D07, 
76U05, 
47A10, 
35B10, 
76D05, 
35Q30, 
35A01. 
\\
\noindent
\textbf{Keywords:} 
Oseen flow, rotating obstacle, resolvent problem, time-periodic solutions.

\section{Introduction}

Consider the system
\begin{equation}\label{sys:Oseen.rot.res}
\begin{pdeq}
is\vvel+\tay\rotterm{\vvel} 
- \Delta \vvel
-\rey\partial_1\vvel
+ \grad \vpres
 &= g
&& \tin \Omega, \\
\Div\vvel&=0
&& \tin\Omega, \\
\vvel&=0
&& \ton \partial\Omega,
\end{pdeq}
\end{equation}
where $\Omega\subset\R^3$ is a three-dimensional exterior domain,
and $\rey,\,\tay>0$ and $s\in\R$ are given parameters. 
The function $g\colon\Omega\to\R^3$ is a given vector field,
and the unknown solution $\np{\vvel,\vpres}$ consists of
the vector field $\vvel\colon\Omega\to\R^3$ and the scalar field $\vpres\colon\Omega\to\R$.
Problem \eqref{sys:Oseen.rot.res}
can be regarded as a resolvent problem with 
a purely imaginary resolvent parameter $is$, $s\in\R$.
In this article we provide function classes 
where the existence of a unique solution to \eqref{sys:Oseen.rot.res}
can be established.
We further investigate the availability of uniform resolvent estimates,
which allow to establish solutions to the associated 
time-periodic linear problem
\begin{equation}\label{sys:Oseen.rot.tp}
\begin{pdeq}
\pt\uvel+\tay\rotterm{\uvel} 
- \Delta \uvel
-\rey\partial_1\uvel
+ \grad \upres
 &= f
&& \tin \torus\times\Omega, \\
\Div\uvel&=0
&& \tin \torus\times\Omega, \\
\uvel&=0
&& \ton \torus\times\partial\Omega.
\end{pdeq}
\end{equation}
This linear theory can then be applied to 
show existence of solutions to the nonlinear problem
\begin{equation}\label{sys:NS.rot.tp}
\begin{pdeq}
\partial_t \uvel + \tay\rotterm{\uvel} 
-\rey\partial_1\uvel+ \uvel\cdot\grad\uvel
&= f + \Delta \uvel - \grad \upres 
&& \tin \torus\times\Omega, \\
\Div\uvel&=0
&& \tin \torus\times\Omega, \\
\uvel&=\rey\eone+\tay\eone\wedge x
&& \ton \torus\times\partial\Omega, \\
\lim_{\snorm{x}\to\infty} \uvel(t,x) &= 0
&& \tfor t\in \R,
\end{pdeq}
\end{equation}
which describes the time-periodic flow of 
an incompressible viscous fluid past a rigid body 
that moves 
with (non-vanishing, time-independent) translational and rotational velocities
$\rey\eone$ and $\tay\eone$ for $\rey,\,\tay>0$.
Here $\Omega\subset\R^3$ is the exterior domain surrounding the rigid body,
and to indicate that all occurring functions are time periodic,
the time axis is given by the torus group 
$\torus\coloneqq\R/\per\Z$ for a prescribed time period $\per>0$.
The functions $\uvel\colon\torus\times\Omega\to\R^3$
and $\upres\colon\torus\times\Omega\to\R$ 
denote velocity and pressure fields of the fluid,
expressed in a frame attached to the body, 
and $f\colon\torus\times\Omega\to\R^3$ is an external body force.
Here, viscosity and density constants are set equal to $1$,
and the fluid is assumed to be attached to the boundary of the body.
Moreover, \eqrefsub{sys:NS.rot.tp}{4} 
indicates that the flow is at rest at infinity.
Since this condition will later be included
in the definition of the function spaces 
in a generalized sense,
we omitted
the corresponding equation in \eqref{sys:Oseen.rot.res} and \eqref{sys:Oseen.rot.tp}.
Our analysis of the time-periodic problems will be based on the 
study of the resolvent problem \eqref{sys:Oseen.rot.res}.
Note that if $\uvel$ is a $\per$-periodic solution to \eqref{sys:Oseen.rot.tp},
then the Fourier coefficient of order $k\in\Z$ satisfies \eqref{sys:Oseen.rot.res}
with $s=\perf k$.
This also explains why we only consider purely imaginary resolvent parameters $is$, $s\in\R$,
in this article.

The analysis of solutions to the nonlinear time-periodic problem \eqref{sys:NS.rot.tp}
was initiated by Galdi and Silvestre \cite{GaldiSilvestre_ExistenceTPSolutionsNSAroundMovingBody_2006},
who derived the existence of weak solutions
when the body performs a general time-periodic rigid motion.
However, the established $\LR{2}$ framework was not appropriate to capture the 
spatial asymptotic properties of the flow.
This issue was addressed in a recent article by Galdi \cite{Galdi2020_ExistenceUniquenessAsBehRegularTPViscFlowAroundMovingBodyRotCase},
who showed the existence of regular solutions
subject to pointwise decay estimates.
An alternative approach that reflects the 
asymptotic behavior away from the body
is based on the fundamental work by Yamazaki \cite{Yamazaki2000},
who considered the time-periodic flow around a body at rest,
that is, system \eqref{sys:NS.rot.tp} for $\rey=\tay=0$.
He established solutions in $\LR{3,\infty}(\Omega)$,
also known as weak-$\LR{3}(\Omega)$, 
by exploiting well-known $\LR{p}$-$\LR{q}$ smoothing estimates 
for the Stokes semigroup.
Similar estimates
were derived by Shibata \cite{Shibata_OseenSemigroupRotatingEffect_2008}
for the semigroup in the case $\rey,\,\tay>0$.
Using these estimates, 
Yamazaki's method 
leads to time-periodic solutions to \eqref{sys:NS.rot.tp}
in $\LR{3,\infty}(\Omega)$,
as was later shown by Geissert, Hieber and Nguyen \cite{GeissertHieberNguyen_TP2016},
who developed a general approach to time-periodic problems 
based on semigroup theory.
Recently, Eiter and Kyed 
\cite{EiterKyed_ViscousFlowAroundRigidBodyPerformingTPMotion_2021}
used a different method
based on a direct analysis of the linear problem
\eqref{sys:Oseen.rot.tp}
without relying on the associated semigroup,
and existence of solutions to \eqref{sys:NS.rot.tp} was shown
such that the velocity field belongs to 
$\LR{q}(\Omega)$, $q\in(2,\infty)$,
under the assumption that the rotational velocity $\tay$ and the time period $\per$ 
are related by $\tay=\perf$.
This severe restriction
already appears in the existence theory for
the linear problem \eqref{sys:Oseen.rot.tp}
derived in \cite{EiterKyed_ViscousFlowAroundRigidBodyPerformingTPMotion_2021}.
In contrast, 
in the cases without translation ($\rey=0$)
or without rotation ($\tay=0$),
existence of time-periodic solutions to \eqref{sys:Oseen.rot.tp}
can be shown without 
further restrictions on the time-period $\per$;
see \cite{Eiter2021_StokesResTPFlowRotating,GaldiKyed_TPflowViscLiquidpBody_2018}.
A leading question of this article is whether the condition
$\tay=\perf$
is necessary for the existence of 
time-periodic solutions to \eqref{sys:Oseen.rot.tp} and \eqref{sys:NS.rot.tp}
if $\rey,\,\tay>0$,
and in how far it can be weakened.

Since the additional term $\tay\rotterm{\uvel}$ for $\tay>0$
may be regarded as a differential operator with unbounded coefficient,
the linear problem \eqref{sys:Oseen.rot.tp}
cannot be treated as a lower-order perturbation of the case $\tay=0$.
Instead, 
we shall handle this term by a method 
recently developed by Galdi and Kyed \cite{KyedGaldi_asplqesoserfI,KyedGaldi_asplqesoserfII}
to investigate steady-state solutions to \eqref{sys:Oseen.rot.tp},
that is,
solutions to \eqref{sys:Oseen.rot.res} for $s=0$.
This method was successfully applied to the time-periodic problem \eqref{sys:Oseen.rot.tp} 
for $\rey=0$ in \cite{Eiter2021_StokesResTPFlowRotating}
and for $\rey>0$ in
\cite{EiterKyed_ViscousFlowAroundRigidBodyPerformingTPMotion_2021}.
As mentioned above, while the linear theory in \cite{Eiter2021_StokesResTPFlowRotating}
holds for all $\per,\,\tay>0$,
in \cite{EiterKyed_ViscousFlowAroundRigidBodyPerformingTPMotion_2021}
the assumption $\perf=\tay$ was imposed.
This restriction makes it possible to absorb the term 
$\tay\rotterm{\uvel}$ into the time derivative $\pt\uvel$
by a suitable transformation when $\Omega=\R^3$,
and to reduce the problem to the case $\tay=0$.
Note that if $\tay\neq\perf$,
then the associated transformation is not an isomorphism between $\per$-periodic functions.
To circumvent this problem, we proceed as in \cite{Eiter2021_StokesResTPFlowRotating},
and adapt this method 
to first derive well-posedness of the resolvent problem \eqref{sys:Oseen.rot.res}
by a reduction to the auxiliary problem
\begin{equation}\label{sys:Oseen.tp.mod.R3.intro}
\begin{pdeq}
is\uvel
+\pt\uvel
- \Delta \uvel
-\rey\partial_1\uvel
+ \grad \upres 
&= f
&&\tin\torus\times\R^3,
\\
\Div\uvel
&=0 
&& \tin\torus\times\R^3.
\end{pdeq}
\end{equation}
This system may be regarded as the mixture of the classical Oseen resolvent problem 
and the time-periodic Oseen problem.
Since the relevant differential operator in \eqref{sys:Oseen.tp.mod.R3.intro}
has constant coefficients,
a solution formula can directly be deduced in terms of a Fourier multiplier in 
the group setting $\torus\times\R^3$,
and we can derive associated \textit{a priori} estimates by $\LR{q}$ multiplier theorems,
which lead to resolvent estimates for \eqref{sys:Oseen.rot.res}
that are uniform for all $s\in\R$
satisfying $\dist(s,\tay\Z\setminus\set{s})>\delta$ for some fixed $\delta>0$.
This result differs from the case $\rey=0$,
where uniform resolvent estimates for all $s\in\R$ are available
(see \cite{Eiter2021_StokesResTPFlowRotating}).
This observation parallels the known results for the non-rotating case $\tay=0$, that is,
for the resolvent problem 
\begin{equation}
\label{sys:Oseen.res}
\begin{pdeq}
is\vvel
- \Delta \vvel
-\rey\partial_1\vvel
+ \grad \vpres &= g
&& \tin \Omega, \\
\Div\vvel&=0
&& \tin \Omega, \\
\vvel&=0
&& \ton \partial\Omega.
\end{pdeq}
\end{equation}
In the Stokes case ($\rey=0$),
there exists a constant $C>0$ 
such that for all 
$s\in\R\setminus\set{0}$
and $g\in\LR{q}(\Omega)^3$, $q\in(1,\infty)$, 
the velocity field $\vvel$ of the (unique) solution $\np{\vvel,\vpres}$ 
to \eqref{sys:Oseen.res} satisfies
\begin{equation}\label{est:Oseen.res.intro}
\snorm{s}\norm{\vvel}_{q}\leq C\norm{g}_{q}.
\end{equation}
In contrast, in the Oseen case ($\rey>0$),
an analogous statement can only be shown 
with a uniform constant $C$ as long as $\snorm{s}\geq \delta$
for some $\delta>0$.
Moreover, one cannot expect the validity of
a uniform estimate as $s\to0$,
as was pointed out by 
Deuring and Varnhorn \cite{DeuringVarnhorn_OseenResolventEst_2010},
who constructed a counterexample
in the special case $q=2$ and $\Omega=\R^3$.
Our results below show that 
in the rotating case $\tay>0$ the situation becomes even more involved,
and we cannot derive a uniform estimate if $s$ approaches $\tay\Z$.
Similarly to \cite{DeuringVarnhorn_OseenResolventEst_2010},
we further construct a counterexample
showing that uniform resolvent estimates cannot exist in the case $q=2$ and $\Omega=\R^3$.

The described phenomenon is in accordance with the following observation:
For $\tay\geq0$ 
we can understand \eqref{sys:Oseen.rot.res}
as the resolvent problem $(is-A_\tay)\vvel=g$
of a closed operator $A_\tay\colon D(A_\tay)\to\LRsigma{q}(\Omega)$
with domain $D(A_\tay)\subset\LRsigma{q}(\Omega)$,
where $\LRsigma{q}(\Omega)$ is the class of all solenoidal functions in $\LR{q}(\Omega)^3$.
Then the (essential) spectra of $A_\tay$ and $A_0$ are related by
\[
\sigma_\text{ess}(A_\tay)=\sigma_\text{ess}(A_0)+i\tay\Z;
\]
see \cite{FarwigNeustupa_SpectralPropertiesLq_2010}.
Therefore, one would expect that the singular behavior of problem \eqref{sys:Oseen.res} 
at $s=0$ 
can be observed in a similar fashion for solutions to \eqref{sys:Oseen.rot.res}
at any $s\in\tay\Z$.
Indeed, this is what our findings indicate.
Moreover, since $0\in\sigma_\text{ess}(A_0)$, 
problem \eqref{sys:Oseen.rot.res}
is ill-posed for all $s\in\tay\Z$ in this functional framework of closed operators.
Therefore, we introduce a different framework
and show that \eqref{sys:Oseen.rot.res} can be rendered well-posed within homogeneous Sobolev spaces.
In particular, we shall not derive an estimate of the form \eqref{est:Oseen.res.intro},
which would contradict $i\tay\Z\subset\sigma_\text{ess}(A_\tay)$,
but the non-standard resolvent estimate
\begin{equation}
\label{est:Oseen.rot.res.intro}
\norm{is\vvel+\tay\rotterm{\vvel}}_{q}
\leq C \norm{g}_{q}.
\end{equation}
Clearly, in the case $\tay=0$, 
estimate \eqref{est:Oseen.rot.res.intro} reduces to \eqref{est:Oseen.res.intro}. 

Based on the analysis of the resolvent problem \eqref{sys:Oseen.rot.res}, 
we then investigate the time-periodic linear problem \eqref{sys:Oseen.rot.tp}.
Provided that we have a uniform resolvent estimate for the relevant resolvent parameters,
that is, that $\dist(s,\tay\Z\setminus\set{s})>\delta$ for all $s\in\perf\Z$,
we then show existence of solutions to \eqref{sys:Oseen.rot.tp} 
in a framework of absolutely convergent Fourier series.
We see below that this assumption is satisfied if and only if $\perf/\tay$ is a rational number.
Under this condition and suitable smallness assumptions on the data $f$, $\rey$ and $\tay$,
we further establish existence of a solution to the nonlinear problem \eqref{sys:NS.rot.tp}.
Moreover, the aforementioned counterexample for the resolvent problem \eqref{sys:Oseen.rot.res}
enables us to derive a result on the non-existence 
of a time-periodic solution to the linear problem \eqref{sys:Oseen.rot.tp}
in $\LR{2}(\R^3)$
if $\perf/\tay\not\in\Q$.
This suggests that the restriction to the case $\perf/\tay\in\Q$
may also be necessary for existence in the general setting $\LR{q}(\Omega)$.

We first introduce the basic notation in Section \ref{sec:notation},
which allows us to formulate the main results of this article in Section \ref{sec:mainresults}.
In Section \ref{sec:preliminaries} we prepare some preliminary results.
Section \ref{sec:resprob.wholespace} and Section \ref{sec:resprob.extdom} focus on the well-posedness of 
the resolvent problem \eqref{sys:Oseen.rot.res} in the whole space $\Omega=\R^3$
and in an exterior domain $\Omega\subset\R^3$, respectively.
In Section \ref{sec:time-periodic} we prove the existence results
for the time-periodic problems \eqref{sys:Oseen.rot.tp} and \eqref{sys:NS.rot.tp}.
Finally, in Section \ref{sec:counterexample} we construct the counterexample 
to the uniformity of the resolvent estimate \eqref{est:Oseen.res.intro}
and conclude the non-existence result
for the linear time-periodic problem \eqref{sys:Oseen.rot.tp}.

\section{Notation}
\label{sec:notation}

For a fixed period $\per>0$,
the symbol $\torus\coloneqq\R/\per\Z$ denotes the 
associated torus group. 
Occasionally, we identify elements of $\torus$ 
with their unique representatives in $[0,\per)$.
Points $(t,x)\in\torus\times\R^3$, 
consist of a time variable $t\in\torus$ and a space variable $x=(x_1,x_2,x_3)\in\R^3$.
We write $\snorm{x}$ for the Euclidean norm of $x$,
and
$x\cdot y$, $x\wedge y$ and $x\otimes y$
denote the scalar, vector and tensor products of $x,y\in\R^3$,
We further use the notation $x\wedge y\cdot z\coloneqq\np{x\wedge y}\cdot z$ for $x,y,z\in\R^3$.

For derivatives in time and space we write 
$\pt$ and $\partial_j\coloneqq\partial_{x_j}$, $j=1,2,3$, respectively,
and $\grad$, $\Div$ and $\Delta$ denote (spatial) gradient, divergence and Laplace operator. 
By $\grad^2\uvel$ we denote the collection of all second-order spatial derivatives of a function $\uvel$.

We use the symbol $C$
to denote a generic positive constant 
that may change from line to line. 
When we want to emphasize that $C$ depends on a specific set of quantities 
$\set{a,b,\dots}$, 
we write $C=C(a,b,\dots)$.

Unless stated otherwise, 
$\Omega\subset\R^3$ always denotes a three-dimensional 
exterior domain,
that is, $\Omega$ is a domain that is the complement of a compact nonempty set.
We let $\ball_R\subset\R^3$ denote the ball
of radius $R>0$ centered at $0$,
and we set $\Omega_R\coloneqq\Omega\cap\ball_R$.

Classical Lebesgue and Sobolev spaces are denoted by $\LR{q}(\Omega)$
and $\WSR{k}{q}(\Omega)$ for $q\in[1,\infty]$ and $k\in\N$,
and we write $\norm{\cdot}_{q;\Omega}$ 
and $\norm{\cdot}_{k,q;\Omega}$ for the associated norms. 
If the underlying domain is clear from the context, 
we simply write $\norm{\cdot}_{q}$ 
and $\norm{\cdot}_{k,q}$.
The same convention is used for
the norm $\norm{\cdot}_{q,\torus\times\Omega}$
of the Lebesgue space $\LR{q}(\torus\times\Omega)$ in space and time.
We further define $\WSRN{1}{q}(\Omega)$ as the closure of $\CRci(\Omega)$ in $\WSR{1}{q}(\Omega)$,
where $\CRci(\Omega)$ is the set of all smooth functions with compact support in $\Omega$,
and $\WSR{-1}{q'}(\Omega)$ 
is the dual space of $\WSRN{1}{q}(\Omega)$,
where $1/q+1/q'=1$.
We denote the norm of $\WSR{-1}{q'}(\Omega)$ 
by $\norm{\cdot}_{-1,q'}$.
The classes $\LRloc{q}(\Omega)$ and $\WSRloc{k}{q}(\Omega)$
consist of all functions that 
locally belong to $\LR{q}(\Omega)$ and $\WSR{k}{q}(\Omega)$,
respectively.

When clear from the context,
we often do not distinguish between 
a space $X$ and its $n$-fold Cartesian product $X^n$, $n\in\N$.
Moreover, $\norm{\cdot}_X$ denotes the norm of a normed vector space $X$.
The symbol $\LR{q}(\torus;X)$ 
denotes the Bochner--Lebesgue space for $q\in[1,\infty]$,
and we define
$\WSR{1}{q}(\torus;X)\coloneqq\setcl{\uvel\in\LR{q}(\torus;X)}{\pt\uvel\in\LR{q}(\torus;X)}$.
Here the torus group $\torus$ is always equipped with
the normalized Haar measure such that
\[
\forall f\in\CR{}(\torus):\quad
\int_\torus f(t)\,\dt
\coloneqq
\frac{1}{\per}
\int_0^\per f(t')\,\dt',
\]
where $\CR{}(\torus)$ denotes the class of continuous functions on $\torus$.

In the case $\Omega=\R^3$, 
the space-time domain is given by 
the locally compact abelian group $\grp\coloneqq\torus\times\R^3$.
The dual group of $\grp$ can be identified with 
$\dualgrp\coloneqq\Z\times\R^3$.
By $\SR(\grp)$ we denote
the associated Schwartz--Bruhat space,
and $\TDR(\grp)$ is its dual space, the space of tempered distributions. 
Both were first introduced by Bruhat \cite{Bruhat61},
see also \cite{EiterKyed_tplinNS_PiFbook} for more details.
We define the Fourier transform 
$\FT_\grp$ on $\grp$
by
\[
\begin{aligned}
\FT_\grp\colon\SR(\grp)\ra\SR(\dualgrp), 
&&\FT_\grp\nb{\uvel}(k,\xi)
&\coloneqq\int_\torus\int_{\R^3} \uvel(t,x)\e^{-ix\cdot\xi-ik t}\,\dx\dt,\\
\iFT_\grp\colon\SR(\dualgrp)\ra\SR(\grp), 
&&\iFT_\grp\nb{\wvel}(t,x)
&\coloneqq\sum_{k\in\Z}\,\int_{\R^3} \wvel(k,\xi)\e^{ix\cdot\xi+ik t}\,\dxi.
\end{aligned}
\]
Then $\FT_\grp$ is an isomorphism
with inverse $\iFT_\grp$
provided that the Lebesgue measure $\dxi$ is suitably normalized.
By duality, the Fourier transform also becomes 
an isomorphism between the corresponding spaces of tempered distributions.
By analogy, we define the Fourier transform 
on the groups $\torus$ and $\R^3$ as
\[
\begin{aligned}
\FT_{\torus}\colon\SR(\torus)&\ra\SR(\Z), 
&\qquad
\FT_{\torus}\nb{\uvel}(k)
&\coloneqq\int_\torus \uvel(t)\e^{-ik t}\,\dt,\\
\iFT_{\torus}\colon\SR(\Z)&\ra\SR(\torus), 
&\qquad
\iFT_{\torus}\nb{\wvel}(t)
&\coloneqq\sum_{k\in\Z}\wvel(k)\e^{ik t},
\\
\FT_{\R^3}\colon\SR(\R^3)&\ra\SR(\R^3), 
&\qquad
\FT_{\R^3}\nb{\uvel}(\xi)
&\coloneqq\int_{\R^3} \uvel(x)\e^{-ix\cdot\xi}\,\dx,\\
\iFT_{\R^3}\colon\SR(\R^3)&\ra\SR(\R^3), 
&\qquad
\iFT_{\R^3}\nb{\wvel}(x)
&\coloneqq\int_{\R^3} \wvel(\xi)\e^{ix\cdot\xi}\,\dxi.
\end{aligned}
\]

For the investigation of the time-periodic problem \eqref{sys:Oseen.rot.tp},
we also work within the space of absolutely convergent $X$-valued Fourier series 
given by
\begin{align}\label{eq:def.Aspace}
\begin{aligned}
\AR(\torus;X)
&\coloneqq
\setcL{f\colon\torus\to X}{f(t)=\sum_{k\in\Z}f_k \e^{ikt}, \ f_k\in X, \ 
\sum_{k\in\Z}\norm{f_k}_{X}<\infty},
\\
\norm{f}_{\AR(\torus;X)}
&\coloneqq\sum_{k\in\Z}\norm{f_k}_{X}
\end{aligned}
\end{align}
for a normed space $X$.
When $X$ is a Banach space,
then $\AR(\torus;X)$ coincides with the Banach space
$\iFT_\torus\bb{\lR{1}(\Z;X)}$, 
whence $\AR(\torus;X)\embeds\CR{}(\torus;X)$.
Since useful inequalities 
can directly be transferred from spaces $X$ to the corresponding 
spaces $\AR(\torus;X)$ (see 
\cite[Prop.~3.1 and 3.2]{EiterKyed_ViscousFlowAroundRigidBodyPerformingTPMotion_2021} for example),
these spaces also provide a useful framework for the treatment of nonlinear time-periodic problems.
For simplicity, we also write $\uvel\in\AR(\torus;\WSRloc{k}{q}(\Omega))$
if $\uvel\in\AR(\torus;\WSR{k}{q}(K))$
for all compact sets $K\subset\Omega$.

Next we formulate the function spaces
for solutions to the time-periodic problems 
\eqref{sys:Oseen.rot.tp} and \eqref{sys:NS.rot.tp}.
For fixed $\per,\,\rey,\,\tay>0$ 
and $q\in(1,2)$,
we define the space for the time-periodic velocity field by
\[
\begin{aligned}
\XreyT(\torus\times\Omega)\coloneqq
\setcl{\uvel\in&\AR(\torus;\WSRloc{2}{q}(\Omega)^3)}{\\
&\grad^2\uvel, \,
\pt\uvel+\tay\rotterm{\uvel}, \,
\partial_1\uvel \in\AR(\torus;\LR{q}(\Omega)),\\
&\qquad\quad
\uvel\in\AR(\torus;\LR{2q/(2-q)}(\Omega)), \ 
\grad\uvel\in\AR(\torus;\LR{4q/(4-q)}(\Omega))}.
\end{aligned}
\]
The function class for the pressure is independent of $\rey,\,\tay>0$ and 
given by
\[
\YT(\torus\times\Omega)
\coloneqq
\setcl{\upres\in\AR(\torus;\WSRloc{1}{q}(\Omega))}{
\grad\upres\in\AR(\torus;\LR{q}(\Omega)),\
\upres\in\AR(\torus;\LR{3q/(3-q)}(\Omega))}.
\]
For the analysis of the resolvent problem \eqref{sys:Oseen.rot.res},
the function class for the velocity fields
additionally depends on $s\in\R$
and is defined by
\[
\begin{aligned}
\Xreys(\Omega)\coloneqq
\setcl{\vvel\in\WSRloc{2}{q}(\Omega)^3}{
\grad^2\vvel, \,
&is\vvel+\tay\rotterm{\vvel}, \,
\partial_1\vvel \in\LR{q}(\Omega),\\
&\qquad\ 
\vvel\in\LR{2q/(2-q)}(\Omega), \ 
\grad\vvel\in\LR{4q/(4-q)}(\Omega)},
\end{aligned}
\]
and the corresponding pressure is characterized by
\[
\Ys(\Omega)
\coloneqq
\setcl{\vpres\in\WSRloc{1}{q}(\Omega)}{
\grad\vpres\in\LR{q}(\Omega),\
\vpres\in\LR{3q/(3-q)}(\Omega)}.
\]
These spaces are constructed in such a way that 
if a $\per$-time-periodic function 
$\uvel$ belongs to $\XreyT(\torus\times\Omega)$, 
then its $k$-th Fourier coefficient 
$\uvel_k\coloneqq\FT_\torus\nb{\uvel}(k)$, $k\in\Z$,
belongs to $\Xreys(\Omega)$ for $s=\perf k$.
Similarly, Fourier coefficients of elements $\upres\in\YT(\torus\times\Omega)$ belong to $\Ys(\Omega)$.

\section{Main Results}
\label{sec:mainresults}

Here we collect our main results on the 
well-posedness of the
resolvent problem \eqref{sys:Oseen.rot.res}
and the time-periodic problems \eqref{sys:Oseen.rot.tp} and \eqref{sys:NS.rot.tp}.
For the whole section, let $\Omega=\R^3$ or $\Omega\subset\R^3$ 
be an exterior domain with $\CR{3}$-boundary.
At first, we address the resolvent problem \eqref{sys:Oseen.rot.res}.

\begin{thm}\label{thm:Oseen.rot.res}
Let $\rey>0$, $s\in\R$ and $0<\tay\leq\tay_0$,
and let $q\in(1,2)$ and $g\in\LR{q}(\Omega)^3$.
Then there exists a unique solution 
$\np{\vvel,\vpres}\in\Xreys(\Omega)\times\Ys(\Omega)$
to \eqref{sys:Oseen.rot.res},
which obeys the estimate
\begin{equation}\label{est:Oseen.rot.res}
\begin{aligned}
&\norm{\dist(s,\tay\Z) \,\vvel}_q
+\norm{is\vvel+\tay\rotterm{\vvel}}_{q}
+\norm{\grad^2 \vvel}_{q}
+\rey\norm{\partial_1\vvel}_{q}
\\
&\qquad\qquad
+\norm{\grad \vpres}_{q}
+\rey^{1/4}\norm{\grad \vvel}_{4q/(4-q)}
+\rey^{1/2}\norm{\vvel}_{2q/(2-q)}
+\norm{\vpres}_{3q/(3-q)}
\leq C\norm{g}_{q}
\end{aligned}
\end{equation}
for a constant $C=C(\Omega,q,\rey,\tay,s)>0$.
If $\theta>0$ such that
\begin{equation}\label{est:rey2.res} 
\rey^2\leq\theta\min\setcl{\snorm{s-\tay k}}{k\in\Z,\,s\neq\tay k},
\end{equation}
then $C=C(\Omega,q,\rey,\tay_0,\theta)>0$, that is, 
$C$ is independent of $s$ and $\tay$.
If additionally $q\in(1,3/2)$, then $C$ can be chosen 
uniformly in $s$, $\tay$ and $\rey$,
that is, 
such that $C=C(\Omega,q,\tay_0,\theta)>0$.
\end{thm}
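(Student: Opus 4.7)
The proof naturally splits into a whole-space argument on $\Omega=\R^3$ followed by a localization procedure for a general exterior domain, and the three levels of uniformity in \eqref{est:Oseen.rot.res} correspond to three increasingly delicate layers of bookkeeping.

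For $\Omega=\R^3$, the rotation term $\tay\rotterm{\vvel}$ has coefficients of linear growth and therefore cannot be treated as a perturbation. My plan is to absorb it into a time derivative by the rigid-rotation change of variables
\[
\uvel(t,x)\coloneqq Q(\tay t)^{\transpose}\vvel\np{Q(\tay t)x},\qquad t\in\torus\coloneqq\R/\np{2\pi/\tay}\Z,
\]
where $Q(\theta)$ is the rotation by angle $\theta$ about $\eone$. Since $Q(\theta)\eone=\eone$, the operators $\Delta$, $\partial_1$ and $\Div$ commute with the transformation, while a direct computation shows that $\tay\rotterm{\vvel}$ is converted (up to a sign that can be harmlessly fixed by orienting time) into $\pt\uvel$. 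Thus $(\uvel,\upres)$, with correspondingly transformed pressure and right-hand side, solves the constant-coefficient auxiliary problem \eqref{sys:Oseen.tp.mod.R3.intro} on $\torus\times\R^3$. I would then solve the latter explicitly by Fourier transform on $\torus\times\R^3$: after eliminating the pressure through the Helmholtz projection, the velocity symbol reads
\[
\Bp{\snorm{\xi}^2-i\rey\xi_1+i(s+\tay k)}^{-1}\Bp{\mathrm{Id}-\snorm{\xi}^{-2}\xi\otimes\xi},\qquad (k,\xi)\in\Z\times\R^3,
\]
and Marcinkiewicz/Lizorkin-type multiplier theorems, or equivalently the transference principle of de Leeuw, then yield the $\LR{q}$-estimates for each quantity on the left-hand side of \eqref{est:Oseen.rot.res}. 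The mixed-norm contributions $\rey^{1/2}\norm{\vvel}_{2q/(2-q)}$ and $\rey^{1/4}\norm{\grad\vvel}_{4q/(4-q)}$ are obtained by exploiting the anisotropic paraboloidal decay of the Oseen fundamental solution, essentially as in the steady-state analysis of Galdi and Kyed.

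For a general exterior domain I would employ the standard cut-off and Bogovskii procedure: choose $R>0$ large enough that $\R^3\setminus\Omega\subset\ball_R$, split $\vvel=\cutoff\vvel+(1-\cutoff)\vvel$ with a smooth $\cutoff$ equal to $1$ near $\partial\Omega$, solve the boundary-value problem for $\cutoff\vvel$ on the bounded set $\Omega_{R+1}$ by a Fredholm argument applied to a compact perturbation of the Stokes system, solve for $(1-\cutoff)\vvel$ by the whole-space result established above, and correct the divergence produced by the cut-off with a Bogovskii operator on the annular region. Piecing these together gives existence in $\Xreys(\Omega)\times\Ys(\Omega)$ with the estimate inherited from the $\R^3$ case, while uniqueness follows by testing against a suitable adjoint problem in duality.

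The main difficulty will be the uniform versions of the estimate. The Oseen resolvent constant on $\R^3$ is well known to blow up as the resolvent parameter approaches $0$ (cf.\ \cite{DeuringVarnhorn_OseenResolventEst_2010}), so the multiplier $\np{\snorm{\xi}^2-i\rey\xi_1+i(s+\tay k)}^{-1}$ is singular precisely when $s+\tay k=0$. Condition \eqref{est:rey2.res} is tailored to forbid exactly this: it controls $\rey^2$ by $\dist(s,\tay\Z\setminus\set{s})$, which is the minimal distance of $s+\tay k$ from the origin over all admissible $k$, and this is what renders the multiplier norm controllable by a constant depending only on $q$, $\tay_0$ and $\theta$. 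To remove the remaining dependence on $\rey$ in the range $q\in(1,3/2)$, I would rescale by $\rey$, reducing to a normalized resolvent equation in which the explicit $\rey^{1/4}$ and $\rey^{1/2}$ weights on the left-hand side of \eqref{est:Oseen.rot.res} exactly absorb the scaling factors; the restriction $q<3/2$ is precisely the regime in which all the mixed-norm Lebesgue indices appearing in the statement remain compatible with this anisotropic Oseen scaling. I expect this combined bookkeeping of spectral-gap control and Oseen scaling to be the principal technical obstacle.
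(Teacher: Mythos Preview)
Your whole-space strategy---absorb the rotation into $\pt$ via the rigid-rotation change of variables, solve the resulting constant-coefficient system \eqref{sys:Oseen.tp.mod.R3.intro} on $\torus\times\R^3$ by Fourier multipliers and transference, and track the multiplier norm in terms of $\rey^2/\dist(s,\tay\Z\setminus\set{s})$---is exactly what the paper does, and your identification of condition \eqref{est:rey2.res} as the spectral-gap condition that bounds this ratio by $\theta$ is correct.

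The gap lies in the exterior-domain step, specifically in how you propose to obtain the uniform versions of \eqref{est:Oseen.rot.res}. The cut-off/Bogovski\u{\i} decomposition does not give the estimate cleanly ``inherited from the $\R^3$ case'': it produces lower-order error terms of the form $\norm{\vvel}_{1,q;\Omega_R}$, $\norm{\vpres}_{q;\Omega_R}$ and $\snorm{s}\norm{\vvel}_{-1,q;\Omega_R}$ on the right-hand side, and these have to be absorbed into the left-hand side before you have \eqref{est:Oseen.rot.res}. The paper does this by a contradiction/compactness argument (assume the constant blows up along a sequence $(s_j,\tay_j,\rey_j)$, normalize, extract weak limits, identify the limit as a solution of a homogeneous problem, and use uniqueness). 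Your proposed alternative---rescale by $\rey$ to remove the $\rey$-dependence---cannot work here because the exterior domain $\Omega$ is fixed and not scale-invariant; anisotropic Oseen scaling changes the domain. In fact, the restriction $q<3/2$ has nothing to do with compatibility of the mixed-norm indices under scaling: its real role is that in the contradiction argument one may have $\rey_j\to 0$, in which case the control $\rey_j^{1/2}\norm{\vvel_j}_{2q/(2-q)}\leq 1$ degenerates, and one needs the Sobolev embedding $\grad^2\vvel\in\LR{q}\Rightarrow\vvel\in\LR{3q/(3-2q)}$ (available only for $q<3/2$) to retain enough integrability of the limit velocity to invoke uniqueness for the limiting (possibly Stokes) problem. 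Without this mechanism you will not be able to show the constant is uniform in $\rey$.
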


Observe that
we may reformulate \eqref{est:rey2.res}
as
\[
\rey^2\leq\theta\dist(s,\tay\Z\setminus\set{s})=
\begin{cases}
\theta\tay &\tif s\in\tay\Z,
\\
\theta\dist(s,\tay\Z) &\tif s\not\in\tay\Z.
\end{cases}
\]
One readily sees that there is no $\theta>0$ such that 
this condition is satisfied for all $s\in\R$ at the same time.
Therefore,
Theorem \ref{thm:Oseen.rot.res}
does not allow to choose the same constant $C$ for all $s\in\R$.
As explained in the introduction, 
this lack of a uniform constant is not surprising 
since the same phenomenon occurs for \eqref{sys:Oseen.res}, 
the Oseen resolvent problem without rotation,
as $s\to0$.

To obtain a $\per$-periodic solution 
to \eqref{sys:Oseen.rot.tp} in terms of a Fourier series,
a uniform constant for all $s\in\perf\Z$ is necessary.
As follows from Proposition \ref{prop:linearcomb} below,
condition \eqref{est:rey2.res} can only be satisfied for all $s\in\perf\Z$
if the quotient $\perf/\tay$ is a rational number.
This observation leads to the following existence result 
for the time-periodic problem \eqref{sys:Oseen.rot.tp}.

\begin{thm}\label{thm:Oseen.rot.tp}
Let $\per,\,\rey>0$ and $0<\tay\leq\tay_0$,
and let $q\in(1,2)$ and $f\in\AR(\torus;\LR{q}(\Omega)^3)$.
If $\perf/\tay\in\Q$,
then there exists a unique $\per$-periodic solution 
$\np{\uvel,\upres}\in\XreyT(\torus\times\Omega)\times\YT(\torus\times\Omega)$
to \eqref{sys:Oseen.rot.tp},
which obeys the estimate
\begin{equation}\label{est:Oseen.rot.tp}
\begin{aligned}
&\norm{\pt\uvel+\tay\rotterm{\uvel}}_{\AR(\torus;\LR{q}(\Omega))}
+\norm{\grad^2 \uvel}_{\AR(\torus;\LR{q}(\Omega))}
+\rey\norm{\partial_1\uvel}_{\AR(\torus;\LR{q}(\Omega))}
\\
&\qquad\quad
+\norm{\grad \upres}_{\AR(\torus;\LR{q}(\Omega))}
+\rey^{1/4}\norm{\grad \uvel}_{\AR(\torus;\LR{4q/(4-q)}(\Omega))}
+\rey^{1/2}\norm{\uvel}_{\AR(\torus;\LR{2q/(2-q)}(\Omega))}
\\
&\qquad\qquad\qquad\qquad\qquad\qquad\qquad\qquad\quad\
+\norm{\upres}_{\AR(\torus;\LR{3q/(3-q)}(\Omega))}\leq C\norm{f}_{\AR(\torus;\LR{q}(\Omega))}
\end{aligned}
\end{equation}
for a constant $C=C(\Omega,q,\rey,\tay,\per)>0$.
If $\theta>0$ such that 
\begin{equation}
\label{est:rey2.tp}
\rey^2\leq\theta\min\setcl{a>0}{a\in\perf\Z+\tay\Z},
\end{equation}
then $C=C(\Omega,q,\rey,\tay_0,\theta)>0$, that is, 
$C$ is independent of $\per$ and $\tay$.
If additionally $q\in(1,3/2)$, then $C$ can be chosen 
independently of $\tay$, $\per$ and $\rey$,
that is, 
such that $C=C(\Omega,q,\tay_0,\theta)>0$.
\end{thm}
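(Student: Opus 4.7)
The plan is to reduce \eqref{sys:Oseen.rot.tp} to the resolvent problem \eqref{sys:Oseen.rot.res} via Fourier series in time. Expanding $f=\sum_{k\in\Z}f_k\e^{ikt}$ with $\sum_{k\in\Z}\norm{f_k}_q<\infty$, I would apply Theorem \ref{thm:Oseen.rot.res} at each $s=\perf k$ to obtain a unique pair $(\vvel_k,\vpres_k)\in\Xreys(\Omega)\times\Ys(\Omega)$, and then assemble the candidate solution
\[
\uvel(t,x)=\sum_{k\in\Z}\vvel_k(x)\,\e^{ikt},
\qquad
\upres(t,x)=\sum_{k\in\Z}\vpres_k(x)\,\e^{ikt}.
\]
The task then reduces to showing that these series converge absolutely in all the norms appearing in the definition of $\XreyT(\torus\times\Omega)$ and $\YT(\torus\times\Omega)$, which in turn requires uniform resolvent estimates from Theorem \ref{thm:Oseen.rot.res} at every $s\in\perf\Z$.

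The crucial observation is that the rationality condition $\perf/\tay\in\Q$ ensures exactly this uniformity. For $s=\perf k$ and arbitrary $m\in\Z$ with $s\neq\tay m$, the difference $s-\tay m$ lies in $\perf\Z+\tay\Z$, and hence $\dist(s,\tay\Z\setminus\set{s})\geq\min\setcl{a>0}{a\in\perf\Z+\tay\Z}$. When $\perf/\tay\in\Q$, Proposition \ref{prop:linearcomb} guarantees that $\perf\Z+\tay\Z$ is a discrete subgroup of $\R$, so this infimum is attained and strictly positive; otherwise the subgroup is dense in $\R$ and no uniform positive lower bound can exist. Consequently, under \eqref{est:rey2.tp} the hypothesis \eqref{est:rey2.res} of Theorem \ref{thm:Oseen.rot.res} is satisfied simultaneously for all $s\in\perf\Z$ with the same constant $\theta$, and the bound \eqref{est:Oseen.rot.res} holds for each $(\vvel_k,\vpres_k)$ with a constant $C$ independent of $k$ (and, in the second statement, independent of $\tay$ and $\per$).

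Summing the resulting pointwise estimates for $(\vvel_k,\vpres_k)$ then yields \eqref{est:Oseen.rot.tp} by the very definition of the $\AR(\torus;X)$-norm in \eqref{eq:def.Aspace}. Absolute convergence of the series legitimises termwise differentiation and justifies that $\np{\uvel,\upres}$ solves \eqref{sys:Oseen.rot.tp} in the distributional sense, with $\pt\uvel+\tay\rotterm{\uvel}=\sum_k\bp{i\perf k\,\vvel_k+\tay\rotterm{\vvel_k}}\e^{ikt}$ converging in $\AR(\torus;\LR{q}(\Omega))$. The general case $\perf/\tay\in\Q$ without \eqref{est:rey2.tp} is handled in the same way, only that the minimum $\delta\coloneqq\dist(\perf\Z,\tay\Z\setminus\perf\Z)>0$ now enters the constants through Theorem \ref{thm:Oseen.rot.res}, producing a $C$ that depends additionally on $\per$ and $\tay$. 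Uniqueness is immediate: any second solution in $\XreyT\times\YT$ has Fourier coefficients in $\Xreys\times\Ys$ solving \eqref{sys:Oseen.rot.res} with data $f_k$, so Theorem \ref{thm:Oseen.rot.res} forces them to coincide with $(\vvel_k,\vpres_k)$ for every $k\in\Z$.

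The main obstacle is securing the $k$-independent bound from Theorem \ref{thm:Oseen.rot.res}; without the uniformity, the summation in $\AR(\torus;\LR{q}(\Omega))$ breaks down even if the individual resolvent solutions exist. All remaining steps are routine consequences of the definition of the spaces $\XreyT$, $\YT$ and of the fact that absolute Fourier series commute with Bochner integration and with spatial differential operators.
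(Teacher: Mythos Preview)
Your proposal is correct and follows essentially the same route as the paper: expand $f$ in a Fourier series, solve the resolvent problem \eqref{sys:Oseen.rot.res} at each $s=\perf k$ via Theorem~\ref{thm:Oseen.rot.res}, use Proposition~\ref{prop:linearcomb} together with $\perf/\tay\in\Q$ to obtain a $k$-uniform resolvent constant, sum the estimates in the $\AR$-norm, and deduce uniqueness from the uniqueness part of Theorem~\ref{thm:Oseen.rot.res} applied to each Fourier mode. The only minor slip is notational (your exponential should carry the frequency $\perf k$ rather than $k$, matching your choice $s=\perf k$), but the argument is otherwise identical to the paper's.
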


Observe that, due to the linear structure of $\perf\Z+\tay\Z$,
the condition
\eqref{est:rey2.tp} can be reformulated as
\[
\rey^2
\leq\theta\min\setcl{\snorm{a-b}}{a,b\in\perf\Z+\tay\Z,\ a\neq b}
\]
or
\[
\rey^2
\leq\theta\min\setcl{\snorm{s-\tay k}}{s\in\perf\Z,\,k\in\Z,\,s\neq\tay k}.
\]
This shows that \eqref{est:rey2.tp} is directly obtained from \eqref{est:rey2.res}
by taking the minimum over all $s\in\perf\Z$.
As follows from
Proposition \ref{prop:linearcomb} below,
existence and positivity of this minimum
are ensured by the restriction to $\perf/\tay\in\Q$.
As explained above, 
this restriction
is due to the lack of a uniform estimate 
for the resolvent problem \eqref{sys:Oseen.rot.res}
as $s$ approaches $\tay\Z$.

\begin{rem}\label{rem:est.dist}
In contrast to the other terms on the left-hand side of  \eqref{est:Oseen.rot.res},
the term $\norm{\dist(s,\tay\Z) \,\vvel}_q$ 
does not directly correspond to any of the terms in \eqref{est:Oseen.rot.tp}.
However, if we let $A_1\coloneqq\setc{k\in\Z}{\perf k\in\tay\Z}$
and $A_2\coloneqq\setc{k\in\Z}{\perf k\not\in\tay\Z}$
and decompose
the velocity field $\uvel$
as 
\[
\uvel=\uvel^{(1)}+\uvel^{(2)}, 
\qquad
\uvel^{(1)}\coloneqq\sum_{k\in A_1}\uvel_k \e^{i\perf kt},
\qquad
\uvel^{(2)}\coloneqq\sum_{k\in A_2}\uvel_k \e^{i\perf kt},
\]
then our proof below also yields the estimate
\[
\min\setcl{a>0}{a\in\perf\Z+\tay\Z} \norml{\uvel^{(2)}}_q\leq C\norm{f}_{q}.
\]
As mentioned above, the existence and positivity of the minimum is due to $\perf/\tay\in\Q$.
\end{rem}

For the treatment of the nonlinear problem \eqref{sys:NS.rot.tp},
first observe that $\rey$ and $\tay$ appear as data on the right-hand side of \eqrefsub{sys:NS.rot.tp}{3}.
Therefore, to obtain a solution to \eqref{sys:NS.rot.tp} for ``small'' data,
it is important that the constant $C$
in the \textit{a priori} estimate \eqref{est:Oseen.rot.tp}
can be controlled as $\rey,\tay\to0$.
By Theorem \ref{thm:Oseen.rot.tp},
this is the case if \eqref{est:rey2.tp} holds and $q<3/2$.
Under these conditions and suitable smallness assumptions,
we can derive existence 
of solutions to the nonlinear problem \eqref{sys:NS.rot.tp}.

\begin{thm}\label{thm:NS.rot.tp}
Let $q\in[\frac{12}{11},\frac{6}{5}]$, $\rho\in(\frac{3q-3}{q},1]$ and $\per,\theta,\kappa>0$.
Then there exists $\rey_0>0$ such that for all $\rey,\tay>0$ with $\perf/\tay\in\Q$ and
\begin{equation}\label{est:rey2.nonlin}
\rey\leq\rey_0, \qquad
\tay\leq\kappa\rey^\rho, \qquad
\rey^2\leq\theta\min\setcl{a>0}{a\in\perf\Z+\tay\Z},
\end{equation}
there exists $\varepsilon>0$
such that for all $f\in\AR(\torus;\LR{q}(\Omega)^3)$ 
with $\norm{f}_{\AR(\torus;\LR{q}(\Omega)^3)}<\varepsilon$
there is a solution $\np{\uvel,\upres}\in\XreyT(\torus\times\Omega)\times\YT(\torus\times\Omega)$
to \eqref{sys:NS.rot.tp}.
\end{thm}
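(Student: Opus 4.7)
The plan is to reduce \eqref{sys:NS.rot.tp} to a problem with homogeneous boundary condition and then close a Banach fixed-point argument around Theorem \ref{thm:Oseen.rot.tp}. First I construct a time-independent, solenoidal extension $V$ of the boundary value $\rey\eone+\tay\eone\wedge x$: take $\cutoff\in\CRci(\R^3)$ with $\cutoff\equiv 1$ near $\partial\Omega$ and supported in a fixed ball, set $V_0=\cutoff\np{\rey\eone+\tay\eone\wedge x}$, and remove its divergence by a Bogovski\u{\i} correction supported in a fixed bounded subdomain. By the linearity of the boundary datum in $(\rey,\tay)$, we may write $V=\rey V_1+\tay V_2$ with $V_1,V_2$ fixed, compactly supported, solenoidal vector fields independent of $\rey,\tay,\per$. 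Substituting $\uvel=\vvel+V$ in \eqref{sys:NS.rot.tp} produces a time-periodic problem for $\vvel$ with vanishing boundary value and right-hand side
\[
\calf(\vvel)\coloneqq f-\bigl(\tay\rotterm{V}-\Delta V-\rey\partial_1 V+V\cdot\grad V\bigr)-\vvel\cdot\grad V-V\cdot\grad\vvel-\vvel\cdot\grad\vvel.
\]

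Define $\calm\colon\vvel\mapsto\wvel$, where $\wvel$ is the solution provided by Theorem \ref{thm:Oseen.rot.tp} to \eqref{sys:Oseen.rot.tp} with source $\calf(\vvel)$. Note that the third inequality in \eqref{est:rey2.nonlin} is precisely \eqref{est:rey2.tp}, and $q\in[\tfrac{12}{11},\tfrac{6}{5}]\subset(1,\tfrac{3}{2})$ falls in the range where the constant in \eqref{est:Oseen.rot.tp} is \emph{independent} of $\rey,\tay,\per$; this uniformity is the pivotal input and is the whole reason for the restriction $q<\tfrac{3}{2}$. Next I bound each summand of $\calf(\vvel)$ in $\AR(\torus;\LR{q}(\Omega))$: the purely $V$-dependent terms are compactly supported and polynomial in $(\rey,\tay)$, so they are $\leq C_V(\rey+\tay)$; the linear-in-$\vvel$ cross terms $\vvel\cdot\grad V+V\cdot\grad\vvel$ are $\leq C_V(\rey+\tay)\norm{\vvel}_{\XreyT}$ via H\"older on the bounded support of $V$ combined with the embeddings built into $\XreyT$; and the genuinely nonlinear term $\vvel\cdot\grad\vvel$ is handled with H\"older using $\vvel\in\LR{2q/(2-q)}$ and $\grad\vvel\in\LR{4q/(4-q)}$, supplemented by the Sobolev embedding issued from $\grad^2\vvel\in\LR{q}$ to reach $\vvel\in\LR{3q/(3-2q)}$, plus the Banach-algebra transfer of \cite[Prop.~3.1,\,3.2]{EiterKyed_ViscousFlowAroundRigidBodyPerformingTPMotion_2021} from $\LR{q}$ to $\AR(\torus;\LR{q})$. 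The narrow window $q\in[\tfrac{12}{11},\tfrac{6}{5}]$ is exactly what makes the resulting H\"older/Sobolev exponents match $q$ and yield $\vvel\cdot\grad\vvel\in\AR(\torus;\LR{q}(\Omega))$ with bound $\leq C\norm{\vvel}_{\XreyT}^2$.

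Collecting the estimates, Theorem \ref{thm:Oseen.rot.tp} gives
\[
\norm{\calm(\vvel)}_{\XreyT}\leq C_\ast\bigl(\varepsilon+\rey+\tay+(\rey+\tay)\norm{\vvel}_{\XreyT}+\norm{\vvel}_{\XreyT}^2\bigr),
\]
with $C_\ast=C_\ast(\Omega,q,\tay_0,\theta)$ independent of $\rey,\tay,\per$, and a matching Lipschitz estimate for $\calm(\vvel_1)-\calm(\vvel_2)$. Choosing $\rey_0$ small enough that $C_\ast(\rey+\kappa\rey^\rho)<\tfrac{1}{4}$ on a small ball of radius $\delta$, and then $\varepsilon$ small enough, turns $\calm$ into a strict contraction of the closed $\XreyT$-ball of radius $\delta$; its unique fixed point $\vvel$ yields the desired solution $\uvel=\vvel+V$.

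The main obstacle is that the boundary value $\tay\eone\wedge x$ grows linearly in $x$, so even after cut-off $V$ produces source contributions of size $O(\tay)$ and $O(\tay^2)$ in $\calf$, and the cross term $\vvel\cdot\grad V$ carries a prefactor $\rey+\tay$. Theorem \ref{thm:Oseen.rot.tp} delivers $\rey,\tay,\per$-independent constants only under \eqref{est:rey2.tp}, hence the coupling $\tay\leq\kappa\rey^\rho$ in \eqref{est:rey2.nonlin} is crucial; the precise lower bound $\rho>(3q-3)/q$ is what is needed so that the $\tay$-generated terms in $\calf$ vanish strictly faster in $\rey$ than the smallness threshold imposed by the linear a priori estimate together with the quadratic nonlinear bound, while the restriction $q\in[\tfrac{12}{11},\tfrac{6}{5}]$ is dictated by the nonlinear estimate for $\vvel\cdot\grad\vvel$ in $\LR{q}$.
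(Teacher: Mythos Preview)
Your overall strategy matches the paper's: lift the boundary datum, then run a contraction argument on top of Theorem~\ref{thm:Oseen.rot.tp}. The lifting via cut-off plus Bogovski\u{\i} is fine (the paper uses an explicit curl potential instead, but this is cosmetic). However, your displayed fixed-point estimate
\[
\norm{\calm(\vvel)}_{\XreyT}\leq C_\ast\bigl(\varepsilon+\rey+\tay+(\rey+\tay)\norm{\vvel}_{\XreyT}+\norm{\vvel}_{\XreyT}^2\bigr),
\quad
C_\ast=C_\ast(\Omega,q,\tay_0,\theta),
\]
cannot be correct with $C_\ast$ independent of $\rey$, and this is exactly where the condition $\rho>(3q-3)/q$ enters---not merely through the $\tay$-generated lifting terms, as you suggest.

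The point is that Theorem~\ref{thm:Oseen.rot.tp} controls the \emph{$\rey$-weighted} quantities $\rey^{1/2}\norm{\uvel}_{\AR(\torus;\LR{2q/(2-q)})}$ and $\rey^{1/4}\norm{\grad\uvel}_{\AR(\torus;\LR{4q/(4-q)})}$ uniformly, not the unweighted ones. So either (a) you put these $\rey$-weights into the norm on $\XreyT$, in which case the linear constant is uniform but the H\"older estimate for $\vvel\cdot\grad\vvel$ forces you to pay a factor $\rey^{-(3q-3)/q}$ when you extract $\norm{\vvel}_{2q/(2-q)}$ or $\norm{\grad\vvel}_{4q/(4-q)}$; or (b) you drop the $\rey$-weights from the norm, in which case the nonlinear bound is uniform but the linear solution operator now carries a factor $\rey^{-1/2}$. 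In either formulation one of your two ``uniform'' claims fails. As written, your argument never uses the hypothesis $\rho>(3q-3)/q$: with $C_\ast$ truly independent of $\rey$, any $\rho>0$ would close the contraction.

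The paper fixes this by working with the weighted norm $\norm{\cdot}_{q,\rey,\tay}$, obtaining the nonlinear bound $\norm{\caln(\zvel)}_{\AR(\torus;\LR{q})}\leq C\bigl((\rey+\tay)(1+\rey+\tay+\norm{\zvel}_{q,\rey,\tay})+\rey^{-(3q-3)/q}\norm{\zvel}_{q,\rey,\tay}^2\bigr)$, and then choosing the fixed-point ball radius $\delta=\rey^{\mu}$ with $\mu\in\bigl(\tfrac{3q-3}{q},\rho\bigr)$. The upper bound $\mu<\rho$ ensures $(\rey+\kappa\rey^{\rho})\ll\delta$, while the lower bound $\mu>(3q-3)/q$ ensures $\rey^{-(3q-3)/q}\delta\to 0$; both are needed, and this is the genuine origin of the constraint on $\rho$. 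Your sketch should make the $\rey$-dependence in the nonlinear estimate explicit and choose $\delta$ accordingly.
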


For the proof we will proceed as in \cite{EiterKyed_ViscousFlowAroundRigidBodyPerformingTPMotion_2021},
where the case $\tay=\perf$ was treated,
and combine the linear theory from Theorem \ref{thm:Oseen.rot.tp}
with a fixed-point argument.
Note that by following \cite{EiterKyed_ViscousFlowAroundRigidBodyPerformingTPMotion_2021},
one could also allow for a time-dependent translational velocity,
where $\rey$ in \eqref{sys:NS.rot.tp} is replaced with
a time-periodic function $\alpha\colon\torus\to\R$
that satisfies suitable smallness conditions
and has a non-zero mean value $\rey\coloneqq\int_\torus\alpha\,\dt>0$.

As explained above, \eqref{est:rey2.nonlin} cannot be satisfied for $\rey>0$ if $\perf/\tay\not\in\Q$,
since this implies that $\inf\setcl{a>0}{a\in\perf\Z+\tay\Z}=0$.
In contrast, if $\perf/\tay\in\Q$,
one can find suitable parameters $\rey$, $\tay$ to satisfy \eqref{est:rey2.nonlin}. 
Indeed, if $\perf/\tay=c/d$ with $c,d\in\N$ coprime,
then 
\[
\min\setcl{a>0}{a\in\perf\Z+\tay\Z}
=\frac{\tay}{d}\min\setcl{a>0}{a\in c\Z+d\Z}
=\frac{\tay}{d}=\frac{2\pi}{\per c}.
\]
To satisfy \eqref{est:rey2.nonlin}
for given $\per>0$, 
one can thus fix $d\in\N$ and decide $\tay$ by choosing a number $c\in\N$ coprime to $d$ and so large that
$\rey^2/\theta\leq\perf/c=\tay/d\leq\kappa\rey^\rho/d$,
which is possible if $\rey>0$ is sufficiently small.

Now a natural question is 
what happens for $\perf/\tay\not\in\Q$ 
and whether
the exclusion of this case 
in Theorem \ref{thm:Oseen.rot.tp} and Theorem \ref{thm:NS.rot.tp}
is only a remnant of our proof 
or a necessary condition
for the existence of a $\per$-time-periodic solution.
The conjecture that it might be necessary
is supported by the following result.
It shows the existence of a right-hand side $f$ 
such that a time-periodic solution to the linear problem \eqref{sys:Oseen.rot.tp}
satisfying an estimate of the form \eqref{est:Oseen.rot.tp}
cannot exist in the case $q=2$ and $\Omega=\R^3$.
Actually, we show two non-existence results, 
one in the setting of absolutely convergent Fourier series 
of the previous theorems,
and one in the more general setting of $\LR{2}(\torus\times\R^3)$ functions.

\begin{thm}\label{thm:counterexample.tp}
Let $\Omega=\R^3$, $q=2$ and $\rey,\tay, \per>0$
such that $\perf/\tay\not\in\Q$.
\begin{enumerate}[label=\roman*.]
\item \label{it:counterexample.tp.i}
There is $f\in\AR(\torus;\LR{2}(\R^3)^3)$ 
such that there exists no time-periodic solution
$\np{\uvel,\upres}\in\LRloc{1}(\torus\times\R^3)^{3+1}$
to \eqref{sys:Oseen.rot.tp}
with
$\pt\uvel+\tay\rotterm{\uvel},\,
\grad^2\uvel,\,
\partial_1\uvel
\in\AR(\torus;\LR{2}(\R^3)^3)$.
\item \label{it:counterexample.tp.ii}
There is $f\in\LR{2}(\torus\times\R^3)^3$ 
such that there exists no time-periodic solution
$\np{\uvel,\upres}\in\LRloc{1}(\torus\times\R^3)^{3+1}$
to \eqref{sys:Oseen.rot.tp}
with
$\pt\uvel+\tay\rotterm{\uvel},\,
\grad^2\uvel,\,
\partial_1\uvel
\in\LR{2}(\torus\times\R^3)^3$.
\end{enumerate}
\end{thm}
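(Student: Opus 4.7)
The plan is to derive Theorem~\ref{thm:counterexample.tp} from an earlier result of Section~\ref{sec:counterexample} asserting that, in the case $q=2$ and $\Omega=\R^3$, there is no uniform constant controlling the norms on the left-hand side of \eqref{est:Oseen.rot.res} by $\norm{g}_2$ as $s$ approaches $\tay\Z$ from outside. Concretely, I would use a sequence $(s^{(n)},g^{(n)})\in(\R\setminus\tay\Z)\times\LR{2}(\R^3)^3$ with $\dist(s^{(n)},\tay\Z)\to 0$ and $\norm{g^{(n)}}_2=1$ for which the unique solutions $\vvel^{(n)}$ of \eqref{sys:Oseen.rot.res} satisfy
\[
M_n\coloneqq\norm{is^{(n)}\vvel^{(n)}+\tay\rotterm{\vvel^{(n)}}}_2\to\infty.
\]
The hypothesis $\perf/\tay\not\in\Q$, combined with Proposition~\ref{prop:linearcomb}, ensures that $\perf\Z$ accumulates at $\tay\Z$, so one can pass to a subsequence with $s^{(n)}=\perf k_n$ for pairwise distinct $k_n\in\Z$. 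This step is what turns the resolvent non-uniformity into obstructions at genuine Fourier frequencies of $\torus$.

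For Part~\ref{it:counterexample.tp.ii} I would invoke Plancherel on $\torus\times\R^3$: the condition $f\in\LR{2}(\torus\times\R^3)$ is equivalent to $\sum_k\norm{f_k}_2^2<\infty$, and the same decoupling holds for the quantities $\pt\uvel+\tay\rotterm{\uvel}$, $\grad^2\uvel$ and $\partial_1\uvel$. Any periodic distributional solution of \eqref{sys:Oseen.rot.tp} in this class then has Fourier coefficients $(u_k,p_k)$ solving \eqref{sys:Oseen.rot.res} with $s=\perf k$ and $g=f_k$; by the uniqueness part of Theorem~\ref{thm:Oseen.rot.res} these must agree with the resolvent solutions. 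Choosing $f_{k_n}\coloneqq\alpha_n g^{(n)}$ and $f_k\coloneqq 0$ otherwise, with $\sum_n\alpha_n^2<\infty$ but $\sum_n\alpha_n^2 M_n^2=\infty$ (which is arranged by extracting a further subsequence along which $M_n\to\infty$ fast enough), yields $f\in\LR{2}(\torus\times\R^3)$ whose would-be solution cannot satisfy $\pt\uvel+\tay\rotterm{\uvel}\in\LR{2}(\torus\times\R^3)$. Part~\ref{it:counterexample.tp.i} is analogous but in an $\ell^1$-framework: setting $f\coloneqq\sum_n c_n g^{(n)}\e^{i\perf k_n t}$ with $\sum_n c_n<\infty$ (so $f\in\AR(\torus;\LR{2}(\R^3))$) and $\sum_n c_n M_n=\infty$ produces a datum whose candidate solution violates $\pt\uvel+\tay\rotterm{\uvel}\in\AR(\torus;\LR{2}(\R^3))$.

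The main obstacle is the resolvent counterexample itself, which I am treating as input from earlier in Section~\ref{sec:counterexample}. In the spirit of Deuring and Varnhorn \cite{DeuringVarnhorn_OseenResolventEst_2010}, one would construct near-resonance data $g^{(n)}$ via Fourier-multiplier analysis on $\R^3$, quantifying how the symbol of \eqref{sys:Oseen.rot.res} degenerates as the parameter approaches the essential spectrum located on $i\tay\Z$. The key technical point is to obtain an explicit, controllable blow-up rate $M_n\to\infty$; once this is in hand, the passage from the resolvent to the non-existence assertion for \eqref{sys:Oseen.rot.tp} via Fourier series, as sketched above, is purely a bookkeeping step relying on the density of $\perf\Z$ modulo $\tay$ granted by $\perf/\tay\not\in\Q$.
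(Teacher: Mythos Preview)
Your approach is essentially the paper's: build $f$ as a Fourier series supported on the frequencies $k_n$ coming from the resolvent counterexample, weight the modes so that the data series converges while the corresponding solution series diverges, and invoke uniqueness at each Fourier mode to force any candidate solution to coincide with the bad resolvent solutions. Two small corrections are needed. First, you cannot literally ``pass to a subsequence'' of a generic real sequence $(s^{(n)})$ and land in the countable set $\perf\Z$; the point is rather that Theorem~\ref{thm:counterexample} already furnishes the counterexample with $(s_n)\subset\alpha\Z$ for any prescribed $\alpha>0$ with $\alpha/\tay\notin\Q$, so you simply take $\alpha=\perf$ from the outset. Second, the uniqueness result you invoke must cover $q=2$: Theorem~\ref{thm:Oseen.rot.res} is stated only for $q\in(1,2)$, so you should cite the whole-space result Theorem~\ref{thm:Oseen.rot.res.R3} instead, whose uniqueness part~i.\ holds for all $q\in(1,\infty)$ and identifies precisely the quantity $is\vvel+\tay\rotterm{\vvel}$ that you need.
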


In order to prove these non-existence results,
we construct a counterexample 
that shows that there is no uniform resolvent estimate
if $\rey>0$, $q=2$ and $\Omega=\R^3$.
More precisely, we construct a sequence of 
resolvent parameters, right-hand sides and corresponding solutions
to the resolvent problem \eqref{sys:Oseen.rot.res}
that violates the existence of a uniform constant in estimate 
\eqref{est:Oseen.rot.res.intro} with $q=2$.

\begin{thm}\label{thm:counterexample}
Let $\Omega=\R^3$, $q=2$ and $\rey,\tay>0$.
Then there exist sequences $(s_n)\subset\R$
and $(g_n)\subset\LR{2}(\R^3)^3$
and a sequence of solutions 
$(\vvel_n,\vpres_n)\subset\WSRloc{2}{2}(\R^3)^3\times\WSRloc{1}{2}(\R^3)$
to \eqref{sys:Oseen.rot.res} (with $s=s_n$ and $g=g_n$)
such that
$\grad^2\vvel_n,\ is\vvel_n+\tay\rotterm{\vvel_n},\,
\partial_1\vvel_n\in\LR{2}(\torus\times\R^3)$
and
\begin{equation}\label{est:counterexample}
\norm{is_n\vvel_n+\tay\rotterm{\vvel_n}}_{2}
\geq C n^{1/2} \norm{g_n}_{2}
\end{equation}
for a constant $C$ independent of $n$. 
Moreover, for any $\alpha>0$ such that $\alpha/\tay\not\in\Q$,
we can choose $(s_n)\subset\alpha\Z$
with $\dist(s_n,\tay\Z)\to0$ as $n\to\infty$.
\end{thm}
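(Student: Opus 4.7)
The plan is to reduce problem \eqref{sys:Oseen.rot.res} on $\Omega=\R^3$ to the classical Oseen resolvent problem by restricting to a single azimuthal Fourier mode, and then to combine a concentration argument in the style of Deuring and Varnhorn with a Diophantine approximation argument to choose $(s_n)\subset\alpha\Z$. For the reduction, we work in cylindrical coordinates $x=(x_1,r\cos\theta,r\sin\theta)$ with orthonormal basis $\{\eone,\hat{e}_r(\theta),\hat{e}_\theta(\theta)\}$. Using $\partial_\theta\hat{e}_r=\hat{e}_\theta$ and $\partial_\theta\hat{e}_\theta=-\hat{e}_r$, the pointwise rotation $\eone\wedge\vvel$ cancels the basis-rotation part of $(\eone\wedge x)\cdot\grad\vvel=\partial_\theta\vvel$, giving
\[
\tay\rotterm{\vvel}=-\tay\bigl((\partial_\theta v_1)\eone+(\partial_\theta v_r)\hat{e}_r+(\partial_\theta v_\theta)\hat{e}_\theta\bigr).
\]
Hence for any vector field whose cylindrical components are of the \emph{azimuthal mode $k$} form $v_j=\e^{ik\theta}\tilde v_j(x_1,r)$, one has $is\vvel+\tay\rotterm{\vvel}=i(s-\tay k)\vvel$, and since $\Delta,\partial_1,\grad,\Div$ are all invariant under rotation about $\eone$, the mode-$k$ subspace is preserved by the operators appearing in \eqref{sys:Oseen.rot.res}. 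The problem therefore reduces, on this subspace, to the classical Oseen resolvent problem with spectral parameter $\delta:=s-\tay k$.

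On this subspace we construct a Deuring--Varnhorn-type counterexample in Fourier variables. The classical Oseen resolvent corresponds to the multiplier $(i\delta-i\rey\xi_1+\snorm{\xi}^2)^{-1}P_\xi$, whose magnitude attains a minimum of order $(\delta/\rey)^2$ at the critical point $\xi_\ast=(\delta/\rey,0,0)$. For each $n$ we take $\hat g_n$ to be a smooth bump supported in the thin torus $\{\snorm{\xi_1-\delta_n/\rey}\leq\rho_n,\ \snorm{\xi'}\in[\rho_n,2\rho_n]\}$ of radial size $\rho_n\sim\snorm{\delta_n}/\rey$, with polarization proportional to $\e^{ik_n\theta_\xi}\hat{e}_\theta(\theta_\xi)$. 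This polarization makes $g_n$ a mode-$k_n$ vector field, keeps the Fourier support off the $\xi_1$-axis, and is almost orthogonal to $\xi$ on the support so that $P_\xi\hat g_n\neq 0$. Sharp two-sided symbol bounds then yield
\[
\norm{\vvel_n}_{\LR{2}(\R^3)}\geq c\,\rey^2\snorm{\delta_n}^{-2}\norm{g_n}_{\LR{2}(\R^3)},
\]
while compactness of the Fourier support guarantees $\grad^2\vvel_n,\partial_1\vvel_n\in\LR{2}(\R^3)$ and, via the azimuthal identity, $is_n\vvel_n+\tay\rotterm{\vvel_n}=i\delta_n\vvel_n\in\LR{2}(\R^3)$.

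To select $(s_n)\subset\alpha\Z$, we invoke Dirichlet's approximation theorem applied to the irrational number $\alpha/\tay$: there exist infinitely many pairs $(m,k)\in\N\times\Z$ with $\snorm{m\alpha-\tay k}\leq\tay/m$. Picking a subsequence with $m_n\geq n$, setting $s_n:=m_n\alpha$, and denoting by $k_n$ the corresponding approximating integer, we have $\snorm{\delta_n}\leq\tay/m_n$ and $\dist(s_n,\tay\Z)\to 0$. Combining with the preceding step,
\[
\norm{is_n\vvel_n+\tay\rotterm{\vvel_n}}_{\LR{2}(\R^3)}=\snorm{\delta_n}\,\norm{\vvel_n}_{\LR{2}(\R^3)}\geq c\,\rey^2\snorm{\delta_n}^{-1}\norm{g_n}_{\LR{2}(\R^3)}\geq c\,\rey^2\tay^{-1}n\,\norm{g_n}_{\LR{2}(\R^3)},
\]
which yields \eqref{est:counterexample} with room to spare (the effective blow-up rate is $n$ rather than $n^{1/2}$).

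The main technical obstacle lies in the concentration step: the mode-$k_n$ constraint forces the Fourier support of $g_n$ off the $\xi_1$-axis, so one must verify that a bump placed on an annular torus (rather than a solid ball around $\xi_\ast$) still produces sharp two-sided symbol bounds of order $(\delta_n/\rey)^2$, and that the polarization $\hat{e}_\theta(\theta_\xi)$ survives the Helmholtz projection uniformly on the support. These are quantitative refinements of the classical Deuring--Varnhorn construction, and once the geometry is fixed they should be routine.
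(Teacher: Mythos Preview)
Your overall strategy is sound and genuinely different from the paper's route. The paper never works in azimuthal modes directly; instead it builds a solution $V_n$ to the classical Oseen resolvent problem with parameter $\sigma_n\in[\tfrac1n,\tfrac2n]$, lifts it to a time-periodic function via $e^{i\tay\ell_n t}$, conjugates by the rotation $Q_\tay(t)$, and then takes a time average over $\torus=\R/\tfrac{2\pi}{\tay}\Z$ to produce $v_n$. The parity constraint $\ell_n$ odd (furnished by Corollary~\ref{cor:linearcomb.odd}) is needed so that this time average does not vanish, and the sequence $(s_n)\subset\alpha\Z$ is obtained from the density of $\alpha\Z+\tay(2\Z+1)$. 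Your approach short-circuits this detour: the cylindrical identity $is\vvel+\tay\rotterm{\vvel}=i(s-\tay k)\vvel$ on the mode-$k$ subspace is exactly the observation that makes the paper's time-averaging work, and invoking Dirichlet's theorem directly avoids the parity issue. What the paper's route buys is consistency with the machinery developed in Section~\ref{sec:resprob.wholespace}; what your route buys is a shorter and conceptually cleaner argument.

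There is, however, a genuine quantitative gap in your concentration step. With $\rho_n\sim|\delta_n|/\rey$ used for \emph{both} the $\xi_1$-width and the $|\xi'|$-annulus, the denominator $|i\delta_n-i\rey\xi_1+|\xi|^2|$ is \emph{not} uniformly of order $(\delta_n/\rey)^2$ on the support: the imaginary part $|\delta_n-\rey\xi_1|$ can be as large as $\rey\rho_n\sim|\delta_n|$, which for small $|\delta_n|$ dominates $|\xi|^2\sim(\delta_n/\rey)^2$. So the pointwise lower bound on the symbol is only $|\delta_n|^{-1}$, not $\rey^2|\delta_n|^{-2}$, and the asserted inequality $\norm{\vvel_n}_2\geq c\,\rey^2|\delta_n|^{-2}\norm{g_n}_2$ does not follow. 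There are two easy fixes. Either shrink the $\xi_1$-width to $\rho_n'\sim\delta_n^2/\rey^3$ while keeping $|\xi'|\sim|\delta_n|/\rey$, which makes both parts of the denominator $\sim(\delta_n/\rey)^2$ uniformly and restores your rate $n$; or keep $\rho_n\sim|\delta_n|/\rey$ and integrate across the near-singularity $\xi_1\to\delta_n/\rey$ (this is what the paper does, picking up a factor $n$ in the integral and ending with the stated rate $n^{1/2}$). Since the theorem only claims $n^{1/2}$, either route suffices; but as written your bound is not justified. Note also that your polarization $\hat e_\theta(\theta_\xi)$ is exactly orthogonal to $\xi$, so the Helmholtz projection acts as the identity and the ``almost orthogonal'' caveat is unnecessary.
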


\section{Preliminaries}
\label{sec:preliminaries}

Before we start with the proofs of the main theorems,
we collect some auxiliary results in this section.
We begin with the following estimates,
which may be regarded as anisotropic versions of Sobolev's inequality.
For the sake of generality, we consider the $n$-dimensional case.

\begin{lem}\label{lem:embedding.oseen}
Let $n\in\N$, $n\geq2$ and $q\in(1,\infty)$.
Then there exists a constant $C=C(n,q)>0$ such that
for all $\rey>0$ and
all $\vvel\in\WSRloc{2}{q}(\Rn)$ with 
$\grad^2\vvel,\,\partial_1\vvel\in\LR{q}(\Rn)$
and $\vvel\in\LR{r}(\Rn)$ for some $r\in[1,\infty)$
it holds
\begin{align}
\label{est:embedding.oseen.grad}
\rey^{1/(n+1)}\norm{\grad\vvel}_{s_1}
&\leq C\norm{\Delta\vvel+\rey\partial_1\vvel}_q
&&\qquad\tif q\in(1,n+1),
\\
\label{est:embedding.oseen.fct}
\rey^{2/(n+1)}\norm{\vvel}_{s_2}
&\leq C\norm{\Delta\vvel+\rey\partial_1\vvel}_q
&&\qquad\tif q\in(1,\frac{n+1}{2}),
\end{align}
where 
\[
s_1\coloneqq\frac{(n+1)q}{n+1-q},
\qquad
s_2\coloneqq\frac{(n+1)q}{n+1-2q}.
\]
\end{lem}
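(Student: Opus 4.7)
The plan is to reduce everything to the classical Oseen inequalities for $\rey=1$ via an isotropic rescaling and then invoke well-known $\LR{q}$-estimates for the Oseen operator in $\R^n$.

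First I would perform the rescaling $\uvel(y)\coloneqq\vvel(y/\rey)$, so that $\vvel(x)=\uvel(\rey x)$. Then a direct computation gives $\partial_j\vvel(x)=\rey(\partial_j\uvel)(\rey x)$ and $\Delta\vvel(x)=\rey^2(\Delta\uvel)(\rey x)$, hence
\begin{equation*}
\Delta\vvel+\rey\partial_1\vvel=\rey^2\bp{\Delta\uvel+\partial_1\uvel}(\rey\,\cdot).
\end{equation*}
Using the standard dilation formula $\norm{f(\rey\,\cdot)}_p=\rey^{-n/p}\norm{f}_p$, this translates the right-hand side of \eqref{est:embedding.oseen.grad}--\eqref{est:embedding.oseen.fct} into
\begin{equation*}
\norm{\Delta\vvel+\rey\partial_1\vvel}_q=\rey^{2-n/q}\norm{\Delta\uvel+\partial_1\uvel}_q,
\end{equation*}
while the left-hand sides become $\rey^{1-n/s_1}\norm{\grad\uvel}_{s_1}$ and $\rey^{-n/s_2}\norm{\uvel}_{s_2}$, respectively. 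With $s_1=(n+1)q/(n+1-q)$ and $s_2=(n+1)q/(n+1-2q)$ one verifies the identity $n/q-n/s_1=n/(n+1)$, so that the power of $\rey$ exactly matches the prefactor $\rey^{1/(n+1)}$; the analogous identity for $s_2$ yields $\rey^{2/(n+1)}$. Thus both claimed inequalities are equivalent to their respective counterparts for $\rey=1$.

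It therefore remains to prove the scale-invariant estimates
\begin{equation*}
\norm{\grad\uvel}_{s_1}\leq C\norm{\Delta\uvel+\partial_1\uvel}_q,\qquad
\norm{\uvel}_{s_2}\leq C\norm{\Delta\uvel+\partial_1\uvel}_q,
\end{equation*}
which are the classical anisotropic Sobolev inequalities for the Oseen operator in $\R^n$. These can be established by a Fourier-multiplier argument: express $\uvel$ (up to a polynomial, which is ruled out by the integrability assumption $\uvel\in\LR{r}$) through the Oseen fundamental solution, and apply the Littlewood--Paley/homogeneous $\LR{p}$-$\LR{q}$ estimates available for kernels with symbol $\snorm{\xi}^2+i\xi_1$. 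Equivalently, the result is contained in the literature on the Oseen problem. The qualitative growth rates of this kernel in the parabolic scaling $(\xi_1,\xi')\mapsto(\lambda^2\xi_1,\lambda\xi')$ are exactly what produce the exponents $s_1,s_2$ above.

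The main obstacle is the base estimate at $\rey=1$: the Oseen multiplier is not homogeneous in the standard sense, and a naive application of Calder\'on--Zygmund to $\Delta$ alone only yields $\grad^2\uvel\in\LR{q}$, which is weaker. The gain to the anisotropic Lebesgue exponents $s_1,s_2$ requires exploiting the parabolic-type scaling $(\xi_1,\xi')\mapsto(\lambda^2\xi_1,\lambda\xi')$ under which the Oseen symbol transforms homogeneously of degree $2$, combined with a Hardy--Littlewood--Sobolev type argument. Once this base case is in hand, the uniqueness-up-to-polynomials and the assumption $\uvel\in\LR{r}$ guarantee that the fundamental-solution representation is the only candidate, and the scaling computation above delivers \eqref{est:embedding.oseen.grad} and \eqref{est:embedding.oseen.fct} with the precise $\rey$-dependence stated.
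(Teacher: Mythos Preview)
Your proposal is correct, and the scaling reduction to $\rey=1$ is a clean way to make the $\rey$-dependence transparent. The paper's proof proceeds differently: it does not rescale, but writes the $\rey$-dependent multipliers
\[
m_1(\xi)=\frac{\rey^{1/(n+1)}i\xi_j}{i\rey\xi_1-\snorm{\xi}^2},
\qquad
m_2(\xi)=\frac{\rey^{2/(n+1)}}{i\rey\xi_1-\snorm{\xi}^2},
\]
and invokes Lizorkin's multiplier theorem (as in \cite[Lemma~VII.4.2]{GaldiBookNew}) to obtain the $\LR{q}\to\LR{s_j}$ bounds directly, with constants that are uniform in $\rey$ as a byproduct of the multiplier verification. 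Your route factors out the $\rey$-dependence first, so the base-case multiplier carries no parameter and the uniformity in $\rey$ is automatic from the dilation identities; the price is that you still need the $\rey=1$ Oseen estimate, which is exactly the same Lizorkin-type argument the paper uses. In substance both approaches rest on the same Fourier-multiplier input; yours isolates the scaling content more explicitly, while the paper's version is slightly more self-contained in that the multipliers are written down in one step. Your remark that the $\LR{r}$ assumption rules out polynomial ambiguities and justifies the fundamental-solution representation is the right way to close the approximation step, which the paper handles by first proving the estimates on $\SR(\Rn)$ and then passing to the limit.
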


\begin{proof}
At first, assume that $\vvel\in\SR(\Rn)$.
Then we can express $\vvel$ and $\partial_j\vvel$, $j=1,\dots,n$,
by means of the Fourier transform such that
\[
\rey^{1/(n+1)}\partial_j\vvel
=\iFT_{\Rn}\bb{m_1\,\FT_{\Rn}\nb{\Delta\vvel+\rey\partial_1\vvel}},
\quad
\rey^{2/(n+1)}\vvel
=\iFT_{\Rn}\bb{m_2\,\FT_{\Rn}\nb{\Delta\vvel+\rey\partial_1\vvel}}
\]
with
\[
m_1(\xi)
\coloneqq\frac{\rey^{1/(n+1)}i\xi_j}{i\rey\xi_1-\snorm{\xi}^2},
\qquad
m_2(\xi)
\coloneqq\frac{\rey^{2/(n+1)}}{i\rey\xi_1-\snorm{\xi}^2}.
\]
As in the proof of \cite[Lemma VII.4.2]{GaldiBookNew},
one readily deduces from Lizorkin's multiplier theorem
that $m_1$ and $m_2$ are Fourier multipliers
such that
\[
\begin{aligned}
&\forall g\in\SR(\Rn):
\quad 
&\norml{\iFT_{\Rn}\bb{m_1\,\FT_{\Rn}\nb{g}}}_{s_1}
&\leq C\norm{g}_{q} \quad \tif q<n+1,
\\
&\forall g\in\SR(\Rn):
\quad
&\norml{\iFT_{\Rn}\bb{m_2\,\FT_{\Rn}\nb{g}}}_{s_2}
&\leq C\norm{g}_{q} \quad \tif q<(n+1)/2,
\end{aligned}
\]
where $C$ is independent of $\rey$.
Together with the above representation formulas, 
this property directly implies \eqref{est:embedding.oseen.grad} 
and \eqref{est:embedding.oseen.fct}
for $\vvel\in\SR(\Rn)$.
For general $\vvel\in\LR{r}(\Rn)$ with $\grad^2\vvel,\,\partial_1\vvel\in\LR{q}(\Rn)$,
the inequalities now follow from an approximation by functions from $\SR(\Rn)$.
\end{proof}

The following elementary result
will explain how the restriction $\perf/\tay\in\Q$ 
comes into play
in the existence result of Theorem \ref{thm:Oseen.rot.tp}.

\begin{prop}
\label{prop:linearcomb}
Let $\alpha,\tay>0$ and define $M\coloneqq\setcl{k\alpha+\ell\tay}{k,\ell\in\Z}$.
Then $M$ is discrete in $\R$ if and only if $\alpha/\tay\in\Q$,
and $M$ is dense in $\R$ if and only if $\alpha/\tay\not\in\Q$.
In particular,
\begin{equation}
\label{eq:linearcomb.inf}
\inf\setcl{a>0}{a\in M}
=
\begin{cases}
\min\setcl{a>0}{a\in M}>0 
& \tif \alpha/\tay\in\Q,
\\
0 
&\tif \alpha/\tay\not\in\Q.
\end{cases}
\end{equation}
\end{prop}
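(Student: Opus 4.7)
The plan is to recognize that $M=\alpha\Z+\tay\Z$ is an additive subgroup of $(\R,+)$ and to invoke the classical dichotomy for subgroups of $\R$: every nontrivial subgroup is either of the form $c\Z$ for some $c>0$ (discrete) or dense in $\R$. Once this dichotomy is at hand, the whole proposition reduces to deciding which alternative occurs in terms of $\alpha/\tay$.

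First I would establish the dichotomy. Set $c\coloneqq\inf\{a>0:a\in M\}\geq 0$. If $c>0$, I would argue that $c\in M$: otherwise, by definition of infimum, there would exist $a_1,a_2\in M$ with $c<a_1<a_2<\tfrac{3}{2}c$, and then $a_2-a_1\in M\cap(0,c/2)$ would contradict the choice of $c$. Then $M=c\Z$ follows by the standard Euclidean-division argument: any $a\in M$ can be written as $a=nc+r$ with $n\in\Z$ and $r\in[0,c)$, and $r=a-nc\in M$ forces $r=0$. If instead $c=0$, then for every $\epsilon>0$ there is some $a\in M$ with $0<a<\epsilon$, and the integer multiples $\{na:n\in\Z\}\subset M$ are $\epsilon$-dense in $\R$, proving density.

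Next I would determine which case applies. If $\alpha/\tay=p/q$ with $p,q\in\Z$ coprime and $q\geq 1$, then
\[
k\alpha+\ell\tay=\frac{\tay}{q}\bigl(kp+\ell q\bigr),
\]
and Bezout's identity gives $\{kp+\ell q:k,\ell\in\Z\}=\Z$, so $M=\tfrac{\tay}{q}\Z$, which is discrete with smallest positive element $\tay/q>0$. Conversely, if $M=c\Z$ for some $c>0$, then $\alpha,\tay\in c\Z\setminus\{0\}$, say $\alpha=mc$, $\tay=nc$ with $m,n\in\Z\setminus\{0\}$, whence $\alpha/\tay=m/n\in\Q$. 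Combining, $M$ is discrete iff $\alpha/\tay\in\Q$, and dense iff $\alpha/\tay\notin\Q$.

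The formula \eqref{eq:linearcomb.inf} is then immediate: in the discrete case $M=\tfrac{\tay}{q}\Z$, so the infimum is attained at $\tay/q>0$; in the dense case the infimum equals $0$ since arbitrarily small positive elements of $M$ exist. There is no real obstacle here — the argument is entirely elementary, and the only mildly subtle step is verifying that the infimum $c$ actually belongs to $M$ when $c>0$.
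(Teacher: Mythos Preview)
Your proof is correct and rests on the same classical fact as the paper's: the dichotomy for additive subgroups of $\R$ (equivalently, that $\beta\Z+\Z$ is discrete iff $\beta\in\Q$ and dense otherwise). The paper simply normalizes via $M=\tay N$ with $N=\{k\beta+\ell:k,\ell\in\Z\}$, $\beta=\alpha/\tay$, and cites this as well known, whereas you supply a self-contained derivation of the dichotomy together with the explicit identification $M=(\tay/q)\Z$ via B\'ezout; both routes are equivalent in substance.
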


\begin{proof}
It is well known that $N\coloneqq\setcl{k\beta+\ell}{k,\ell\in\Z}\subset\R$
is discrete if $\beta\in\Q$, and $N$ is dense if $\beta\not\in\Q$.
Choosing $\beta=\alpha/\tay$, we have $M=\tay N$,
and the whole statement follows directly.
\end{proof}

For the construction of the sequence of resolvent parameters 
in Theorem \ref{thm:counterexample},
we use that
we still obtain a dense set 
after restricting to 
odd multiples of $\tay$
if $\alpha/\tay\not\in\Q$.

\begin{cor}
\label{cor:linearcomb.odd}
Let $\alpha,\tay>0$ with $\alpha/\tay\not\in\Q$.
Then $\alpha\Z+\tay(2\Z+1)$ is dense in $\R$.
\end{cor}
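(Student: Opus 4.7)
The plan is to reduce the claim to Proposition \ref{prop:linearcomb} by rewriting $\alpha\Z+\tay(2\Z+1)$ as a translate of the lattice-like set $\alpha\Z+(2\tay)\Z$.

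First I would observe the elementary decomposition
\[
\alpha\Z+\tay(2\Z+1)=\alpha\Z+2\tay\Z+\tay,
\]
i.e., the set in question is the translate by $\tay$ of the additive set $M'\coloneqq\alpha\Z+2\tay\Z$.

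Next I would apply Proposition \ref{prop:linearcomb} to $M'$ with the pair $(\alpha,2\tay)$ in place of $(\alpha,\tay)$. The required irrationality is $\alpha/(2\tay)\notin\Q$, which follows immediately from the hypothesis $\alpha/\tay\notin\Q$ since $\alpha/(2\tay)=\tfrac12\cdot\alpha/\tay$ and $\Q$ is closed under multiplication by $\tfrac12$. Hence $M'$ is dense in $\R$.

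Finally, density is preserved by translations, so $M'+\tay=\alpha\Z+\tay(2\Z+1)$ is dense in $\R$ as well, which is the claim. There is no real obstacle here; the only thing to be careful about is to state the decomposition correctly and to verify that halving preserves irrationality, both of which are immediate.
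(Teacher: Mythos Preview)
Your argument is correct. It is also cleaner than the paper's own proof, which is worth noting.

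The paper does not use the translation observation directly. Instead, it applies Proposition~\ref{prop:linearcomb} to the pair $(2\alpha,4\tay)$ to get density of $2\alpha\Z+4\tay\Z$, then for a target $a\in\R$ it picks two approximating sequences $(b_n)$ and $(c_n)$ in that set converging to $a$ and $a-2\tay$, and sets $a_n\coloneqq\tfrac12(b_n+c_n+2\tay)$; a short computation shows $a_n\in\alpha\Z+2\tay\Z+\tay$ and $a_n\to a$. Your route---write $\alpha\Z+\tay(2\Z+1)=(\alpha\Z+2\tay\Z)+\tay$, apply Proposition~\ref{prop:linearcomb} to $(\alpha,2\tay)$ using that $\alpha/(2\tay)\notin\Q$, and then translate---reaches the same conclusion with one application of the proposition and no averaging trick. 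The paper's detour through $2\alpha\Z+4\tay\Z$ and the halving step buys nothing extra here; your argument is strictly more economical.
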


\begin{proof}
Let $a\in\R$.
Since $(2\alpha)/(4\tay)\in\R\setminus\Q$,
Proposition \ref{prop:linearcomb}
shows existence of sequences $(b_n), (c_n)\subset 2\alpha\Z+4\tay\Z$ 
such that $b_n\to a$ and $c_n\to a-2\tay$ as $n\to\infty$.
We define $a_n\coloneqq \half(b_n+c_n+2\tay)$.
Then $(a_n)\subset\alpha\Z+2\tay\Z+\tay=\alpha\Z+\tay(2\Z+1)$
and $a_n\to a$ as $n\to\infty$.
This completes the proof.
\end{proof}

\section{The resolvent problem in the whole space}
\label{sec:resprob.wholespace}

We begin to study 
the resolvent problem \eqref{sys:Oseen.rot.res}
in the case $\Omega=\R^3$,
where it reduces to
\begin{equation}\label{sys:Oseen.rot.res.R3}
\begin{pdeq}
is\vvel
+\tay\rotterm{\vvel}
- \Delta \vvel
- \rey\partial_1\vvel
+ \grad \vpres 
&= g
&&\tin\R^3,
\\
\Div\vvel
&=0 
&& \tin\R^3.
\end{pdeq}
\end{equation}
This investigation
prepares the analysis of the exterior-domain problem \eqref{sys:Oseen.rot.res}
in the subsequent section.
Moreover, it allows us to establish solutions 
to the associated time-periodic problem
\eqref{sys:Oseen.rot.tp} for $\Omega=\R^3$.
For this purpose,
it is important to keep track of the constants
in the resolvent estimates for \eqref{sys:Oseen.rot.res.R3}.
The main result of this section reads as follows.
\begin{thm}\label{thm:Oseen.rot.res.R3}
Let $q\in(1,\infty)$,
and let $\rey>0,\,\tay>0$ and $s\in\R$.
For each $g\in\LR{q}(\R^3)^3$ there exists 
a solution 
$\np{\vvel,\vpres}\in\WSRloc{2}{q}(\R^3)^3\times\WSRloc{1}{q}(\R^3)$ 
to \eqref{sys:Oseen.rot.res.R3}
that satisfies
\begin{equation}
\begin{aligned}
\norm{\dist(s,\tay\Z)\,\vvel}_{q}
&+\norm{is\vvel+\tay\rotterm\vvel}_{q}
\\
&\qquad+\norm{\grad^2 \vvel}_{q}
+\rey\norm{\partial_1 \vvel}_{q}
+\norm{\grad \vpres}_{q}
\leq C \norm{g}_{q}
\end{aligned}
\label{est:Oseen.rot.res.R3}
\end{equation}
as well as
\begin{align}
\label{est:Oseen.rot.res.R3.grad}
\rey^{1/4}
\norm{\grad\vvel}_{4q/(4-q)}
&\leq C\norm{g}_{q}
\qquad\tif q<4,
\\
\label{est:Oseen.rot.res.R3.grad.Sobolev}
\norm{\grad\vvel}_{3q/(3-q)}
+\norm{\vpres}_{3q/(3-q)}
&\leq C\norm{g}_{q}
\qquad\tif q<3,
\\
\label{est:Oseen.rot.res.R3.fct}
\rey^{1/2}
\norm{\vvel}_{2q/(2-q)}
&\leq C\norm{g}_{q}
\qquad\tif q<2,
\\
\label{est:Oseen.rot.res.R3.fct.Sobolev}
\norm{\vvel}_{3q/(3-2q)}
&\leq C\norm{g}_{q}
\qquad\tif q<3/2
\end{align}
for a constant $C=C(q,\rey,\tay,s)>0$ given by
\begin{equation}
\label{eq:Oseen.rot.res.R3.const}
C=\begin{cases}
C_0 P\bp{\rey^2/\tay}
&\tif s\in\tay\Z,
\\
C_0 P\bp{\rey^2/\dist(s,\tay\Z)}
&\tif s\not\in\tay\Z,
\end{cases}
\end{equation}
where $C_0=C_0(q)>0$ is a constant only depending on $q$,
and $P(\theta)\coloneqq(1+\theta)^3$. 
Moreover, if $\np{\wvel,\wpres}\in\LRloc{1}(\R^3)^{3+1}$
is another distributional solution
to \eqref{sys:Oseen.rot.res.R3}, then the following holds:
\begin{enumerate}[label=\roman*.]
\item
If $\grad^2\wvel,\ is\wvel+\tay\rotterm\wvel,\,
\partial_1\wvel\in\LR{q}(\R^3)$,
then 
\[
\begin{aligned}
is\wvel+\tay\rotterm\wvel&=is\vvel+\tay\rotterm\vvel,
\\
\grad^2\wvel=\grad^2\vvel,
\qquad
&\partial_1\wvel=\partial_1\vvel,
\qquad
\grad\wpres=\grad\vpres.
\end{aligned}
\]
\item
If $q<2$ or $s\not\in\tay\Z$, and if 
$\wvel\in\LR{r}(\R^3)^3$ for some $r\in[1,\infty)$,
then $\wvel=\vvel$ and $\wpres=\vpres+c$ for some constant $c\in\R$.
\end{enumerate}
\end{thm}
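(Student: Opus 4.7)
The plan is to reduce the resolvent problem \eqref{sys:Oseen.rot.res.R3} on $\R^3$ to the auxiliary time-periodic system \eqref{sys:Oseen.tp.mod.R3.intro} on $\torus\times\R^3$ via the rotating change of variables used in \cite{KyedGaldi_asplqesoserfI,Eiter2021_StokesResTPFlowRotating}. Letting $Q(t)=\exp(t\tay\,\eone\wedge\cdot)$, which is $\per$-periodic with $\per=2\pi/\tay$ and fixes $\eone$, a direct computation shows $\pt\uvel=\tay\rotterm{\uvel}$ whenever $\uvel(t,x)\coloneqq Q(t)\vvel(Q(t)^\top x)$. Consequently, setting $\upres(t,x)\coloneqq\vpres(Q(t)^\top x)$ and $f(t,x)\coloneqq Q(t)g(Q(t)^\top x)$, the pair $(\vvel,\vpres)$ solves \eqref{sys:Oseen.rot.res.R3} if and only if the lifted pair $(\uvel,\upres)$ solves the constant-coefficient system \eqref{sys:Oseen.tp.mod.R3.intro}. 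This transfers the variable-coefficient rotation term into the time derivative and puts us in a setting where Fourier multipliers on the abelian group $\grp=\torus\times\R^3$ apply.

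Next I would apply $\FT_\grp$ and eliminate the pressure via the Helmholtz projection; the velocity is then represented as a Fourier multiplier on $\dualgrp=\Z\times\R^3$ with symbol
\begin{equation*}
N(k,\xi)\coloneqq i(s+k\tay)-i\rey\xi_1+\snorm{\xi}^2,
\end{equation*}
and $\grad^2\vvel$, $\partial_1\vvel$, $\grad\vpres$, and $is\vvel+\tay\rotterm{\vvel}$ (which corresponds to $is\uvel+\pt\uvel$) are obtained by multiplying by appropriate polynomial symbols and dividing by $N$. Crucially, $N$ vanishes only at $\xi=0$ together with $s+k\tay=0$, which occurs at exactly one frequency $k_0\in\Z$ when $s\in\tay\Z$ and otherwise not at all. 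I would split $\uvel$ into its singular mode $\uvel^{(\mathrm{s})}$ (at $k_0$, present only when $s\in\tay\Z$) and the remainder $\uvel^{(\mathrm{r})}$: the singular mode is $t$-independent and solves the steady Oseen system on $\R^3$, to which the Galdi--Kyed theory applies, while the remainder is handled by a Marcinkiewicz/Lizorkin-type multiplier theorem on $\Z\times\R^3$, using that $\snorm{s+k\tay}\geq\dist(s,\tay\Z\setminus\set{s})$ for every non-singular mode. Tracking the scaling in $\rey$ and $\dist(s,\tay\Z)$ yields the polynomial constant $P(\theta)=(1+\theta)^3$ in \eqref{eq:Oseen.rot.res.R3.const}; in particular, the non-standard estimate for $\norm{\dist(s,\tay\Z)\,\vvel}_q$ arises as the bound on the multiplier $\dist(s,\tay\Z)\,N^{-1}$, which is finite precisely because the prefactor annihilates the otherwise-singular contribution at $k_0$. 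The anisotropic Sobolev-type estimates \eqref{est:Oseen.rot.res.R3.grad}--\eqref{est:Oseen.rot.res.R3.fct.Sobolev} then follow from Lemma~\ref{lem:embedding.oseen} applied to the bound already obtained for $\Delta\vvel+\rey\partial_1\vvel$, together with standard Sobolev embedding for the pressure.

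For uniqueness, let $(\tvvel,\tvpres)$ be the difference of two solutions; it solves the homogeneous problem with the stated $\LR{q}$-integrability of the relevant derivatives. Lifting to $(\tilde\uvel,\tilde\upres)$ on $\torus\times\R^3$ and applying $\FT_\grp$ shows that the Fourier transform of $\tilde\uvel$ is annihilated by $N(k,\xi)$ in the distributional sense, hence supported in the zero set of $N$---which is empty if $s\notin\tay\Z$ and a single point otherwise. For part~(i), this forces each temporal mode of $\tilde\uvel$ to be polynomial in $x$; combined with $\grad^2\tilde\uvel,\partial_1\tilde\uvel\in\LR{q}(\torus\times\R^3)$, the claimed derivative identities between $\vvel$ and $\wvel$ follow. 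For part~(ii), either the singular mode is absent ($s\notin\tay\Z$), or the extra integrability rules out any non-trivial polynomial contribution, yielding $\tvvel\equiv 0$ and $\tvpres\equiv\mathrm{const}$. The main obstacle will be the quantitative multiplier analysis: establishing the explicit polynomial dependence $(1+\rey^2/\dist(s,\tay\Z))^3$ uniformly over the non-singular modes and combining it consistently with the separate Galdi--Kyed estimate for the singular steady-Oseen subproblem, while preserving the precise powers of $\rey$ in the Sobolev-type bounds.
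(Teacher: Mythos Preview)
Your proposal is essentially the paper's own approach: lift via the rotation $Q_\tay(t)$ to the constant-coefficient auxiliary system on $\torus\times\R^3$ with $\per=2\pi/\tay$, analyze it by Fourier multipliers on $\Z\times\R^3$ (transference plus Marcinkiewicz, with the singular mode handled separately via steady/time-periodic Oseen theory), and deduce the anisotropic bounds from Lemma~\ref{lem:embedding.oseen} and Sobolev embedding. The one step you should make explicit is how to recover a time-independent $(\vvel,\vpres)$ from the time-periodic $(\uvel,\upres)$: the paper does not claim the constructed $(\uvel,\upres)$ is automatically a lift of some $(\vvel,\vpres)$, but instead back-rotates to the system with rotation term and then takes the time mean $\vvel(x)\coloneqq\int_\torus\uvel(t,x)\,\dt$, which solves \eqref{sys:Oseen.rot.res.R3} because $\int_\torus\pt\uvel\,\dt=0$ and the data are $t$-independent.
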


For the analysis of \eqref{sys:Oseen.rot.res.R3}
we first study the auxiliary problem 
\begin{equation}\label{sys:Oseen.tp.mod.R3}
\begin{pdeq}
is\uvel
+\pt\uvel
- \Delta \uvel
-\rey\partial_1\uvel
+ \grad \upres 
&= f
&&\tin\torus\times\R^3,
\\
\Div\uvel
&=0 
&& \tin\torus\times\R^3.
\end{pdeq}
\end{equation}
As before, the torus group $\torus\coloneqq\R/\per\Z$
indicates time periodicity of all functions
with a given period $\per>0$.
Problem \eqref{sys:Oseen.tp.mod.R3}
has the advantage that,
in contrast to the original time-periodic problem \eqref{sys:Oseen.rot.tp},
the associated differential operator
has constant coefficients.
Therefore, one can readily derive a representation formula 
for its solution 
in terms of Fourier multipliers.
Since problem \eqref{sys:Oseen.tp.mod.R3}
is formulated in the locally compact abelian group $\grp\coloneqq\torus\times\R^3$,
we also use multiplier expressions in $\grp$.
One tool to derive $\LR{q}$ multiplier estimates
in this framework is
the so-called transference principle,
which goes back to de Leuuw \cite{Leeuw1965} and, 
in a generalized form, to Edwards and Gaudry \cite[Theorem B.2.1]{EdwardsGaudryBook}.
For an introduction how to employ this theory in the context of the Navier--Stokes equations,
we refer to \cite{EiterKyed_tplinNS_PiFbook}.
Here we use the following version of the transference principle.

\begin{thm}
\label{thm:TransferencePrinciple}
Let $\grp\coloneqq\torus\times\R^3$ and $\grph\coloneqq\R\times\R^3$.
For each $q\in(1,\infty)$ there exists a constant $C_q>0$
with the following property:
If a continuous function $M\colon\grph\to\C$ is 
an $\LR{q}\np{\grph}$ multiplier,
that is,
\[
\forall h\in\SR(\grph) : \quad
\norml{\iFT_{\grph}\bb{M\,\FT_{\grph}\nb{h}}}_{\LR{q}(\grph)}
\leq C_M \norm{h}_{\LR{q}(\grph)}
\]
for some $C_M>0$,
then the restriction $m\coloneqq\restriction{M}{\Z\times\R^3}$
is an $\LR{q}\np{\grp}$ multiplier such that
\[
\forall g\in\SR(\grp) : \quad
\norml{\iFT_{\grp}\bb{m\,\FT_{\grp}\nb{g}}}_{\LR{q}(\grp)}
\leq C_q C_M \norm{g}_{\LR{q}(\grp)}.
\]
\end{thm}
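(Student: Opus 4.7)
The plan is to follow the classical transference principle of de Leeuw, as generalized by Edwards--Gaudry, specialized to the pair of groups $\grph = \R\times\R^3$ and its quotient $\grp = \grph/(\per\Z\times\{0\}) = \torus\times\R^3$. The dual of $\grp$ inside the dual of $\grph$ is exactly $\Z\times\R^3$, so restricting $M$ to $\Z\times\R^3$ is the natural analogue of ``taking the multiplier symbol on the quotient.'' All work is in the first (time) variable; the spatial variable $\xi\in\R^3$ will be treated as a parameter throughout, so the proof essentially reduces to the one-dimensional statement $\R\rightsquigarrow\torus$.

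First I would reduce to $g\in\SR(\grp)$ of the form $g(t,x)=\sum_{k\in F}g_k(x)\e^{ikt}$ with $F\subset\Z$ finite and $g_k\in\SR(\R^3)$; density of such $g$ in $\LR{q}(\grp)$ allows the final estimate to extend. Next I would fix a Gaussian-type approximation $\psi_\epsilon\in\SR(\R)$ with $\psi_\epsilon(\tau)=\epsilon^{1/q}\e^{-\pi\epsilon^2\tau^2}$, so that on the one hand
\begin{equation*}
\norm{\psi_\epsilon}_{\LR{q}(\R)}\to c_q>0,\qquad \ft{\psi_\epsilon}(\sigma)=\epsilon^{1/q-1}\e^{-\pi\sigma^2/\epsilon^2},
\end{equation*}
is concentrated near $0$ as $\epsilon\to0$. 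Define $h_\epsilon\in\SR(\grph)$ by $h_\epsilon(\tau,x)\coloneqq \psi_\epsilon(\tau)\,\tilde g(\tau,x)$, where $\tilde g$ is the $\per$-periodic extension of $g$ to $\R\times\R^3$. A direct Fubini--periodicity computation gives
\begin{equation*}
\norm{h_\epsilon}_{\LR{q}(\grph)}^q
=\int_{\R^3}\int_\R\snorm{\psi_\epsilon(\tau)}^q\,\snorm{\tilde g(\tau,x)}^q\,\dtau\,\dx
\longrightarrow c_q^q \,\per\,\norm{g}_{\LR{q}(\grp)}^q
\end{equation*}
as $\epsilon\to0$, by a density point / Riemann--Lebesgue style argument.

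On the Fourier side one computes
\begin{equation*}
\FT_\grph\nb{h_\epsilon}(\sigma,\xi)
=\sum_{k\in F}\ft{\psi_\epsilon}(\sigma-k)\,\FT_{\R^3}\nb{g_k}(\xi),
\end{equation*}
a sum of sharply peaked bumps around the integers $k\in F$. Applying the multiplier $M$ and using its continuity at each $(k,\xi)$ with $k\in F$, together with the concentration of $\ft{\psi_\epsilon}(\cdot-k)$, the product $M(\sigma,\xi)\ft{\psi_\epsilon}(\sigma-k)$ is uniformly well approximated by $m(k,\xi)\ft{\psi_\epsilon}(\sigma-k)$ on the support. Hence
\begin{equation*}
\iFT_\grph\bb{M\,\FT_\grph\nb{h_\epsilon}}(\tau,x)
=\psi_\epsilon(\tau)\,\iFT_\grp\bb{m\,\FT_\grp\nb{g}}(\tau,x)+R_\epsilon(\tau,x),
\end{equation*}
with the remainder satisfying $\norm{R_\epsilon}_{\LR{q}(\grph)}=o\bp{\norm{h_\epsilon}_{\LR{q}(\grph)}}$ as $\epsilon\to0$.

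Combining the multiplier bound $\norm{\iFT_\grph\bb{M\,\FT_\grph\nb{h_\epsilon}}}_{\LR{q}(\grph)}\leq C_M\norm{h_\epsilon}_{\LR{q}(\grph)}$ with the two norm computations above and passing to the limit $\epsilon\to0$, the factor $\norm{\psi_\epsilon}_{\LR{q}(\R)}$ cancels between the two sides, leaving
\begin{equation*}
\norml{\iFT_\grp\bb{m\,\FT_\grp\nb{g}}}_{\LR{q}(\grp)}\leq C_q\,C_M\,\norm{g}_{\LR{q}(\grp)},
\end{equation*}
with $C_q$ coming purely from the normalization of $\psi_\epsilon$. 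The hard part is the limit argument: one must control the remainder $R_\epsilon$ uniformly, and this is exactly where \emph{continuity} of $M$ enters, via a uniform-on-compacta oscillation bound for $M(\sigma,\xi)-M(k,\xi)$ on the shrinking supports of $\ft{\psi_\epsilon}(\sigma-k)$. Once this step is handled by dominated convergence, extending from the dense class $\SR(\grp)$ to all of $\LR{q}(\grp)$ is immediate.
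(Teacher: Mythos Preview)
The paper does not give its own proof of this theorem: it is stated with the remark that it ``goes back to de Leeuw'' and, in general form, to Edwards--Gaudry \cite{Leeuw1965,EdwardsGaudryBook}, with a further pointer to \cite{EiterKyed_tplinNS_PiFbook} for its use in the Navier--Stokes context. There is thus no in-paper argument to compare against.

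Your sketch is precisely the classical de~Leeuw transference argument that underlies those references, so it is consistent with what the paper invokes. Two small remarks. First, with the paper's normalized Haar measure on $\torus$ (so that $\int_\torus 1\,\dt=1$), the factor $\per$ in your limit $\norm{h_\epsilon}_{\LR{q}(\grph)}^q\to c_q^q\,\per\,\norm{g}_{\LR{q}(\grp)}^q$ should not appear; this is harmless since a matching factor would occur on the other side. Second, the remainder step you flag as ``the hard part'' is usually handled most cleanly by choosing the auxiliary function $\psi$ with $\ft\psi\in\CRci(\R)$ rather than a Gaussian: then for $\epsilon$ small the bumps $\ft\psi_\epsilon(\cdot-k)$, $k\in F$, have disjoint supports, and on each piece the operator with symbol $M(\cdot,\cdot)-M(k,\cdot)$ is itself an $\LR{q}(\grph)$ multiplier with norm $\leq 2C_M$, so the remainder can be estimated directly in $\LR{q}$ without a dominated-convergence argument on the Fourier side. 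With a Gaussian the argument still goes through but needs a little more care (one typically uses boundedness of $M$, which follows by interpolation from the $\LR{q}$ and $\LR{q'}$ multiplier bounds). Either way, your outline is correct.
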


With this theorem one can reduce Fourier multipliers in the group $\grp=\torus\times\R^3$ 
to Fourier multipliers
in the Euclidean setting $\grph=\R\times\R^3$, 
where tools like the multiplier theorems by Mikhlin and Marcinkiewicz 
are available.
We use this method to 
derive $\LR{q}$ estimates of solutions to \eqref{sys:Oseen.tp.mod.R3}
in the context of the following existence theorem.

\begin{thm}\label{thm:Oseen.tp.mod.R3}
Let $q\in(1,\infty)$,
let $\rey,\,\per>0$ and $s\in\R$, and set $\tay\coloneqq\perf$.
For each $f\in\LR{q}(\torus\times\R^3)^3$ there exists 
a solution $\np{\uvel,\upres}$ with
\[
\uvel\in\WSR{1}{q}(\torus;\LRloc{q}(\R^3)^3)\cap\LR{q}(\torus;\WSRloc{2}{q}(\R^3)^3),
\qquad
\upres\in\LR{q}(\torus;\WSRloc{1}{q}(\R^3))
\]
to \eqref{sys:Oseen.tp.mod.R3}
that satisfies
\begin{equation}
\norm{\dist(s,\tay\Z)\,\uvel}_{q}
+\norm{is\uvel+\pt\uvel}_{q}
+\norm{\grad^2 \uvel}_{q}
+\rey\norm{\partial_1 \uvel}_{q}
+\norm{\grad \upres}_{q}
\leq C \norm{f}_{q}
\label{est:Oseen.tp.mod.R3}
\end{equation}
as well as
\begin{align}
\label{est:Oseen.tp.mod.R3.grad}
\rey^{1/4}
\norm{\grad\uvel}_{\LR{q}(\torus;\LR{4q/(4-q)}(\R^3))}
&\leq C\norm{f}_{q}
\qquad\tif q<4,
\\
\label{est:Oseen.tp.mod.R3.grad.Sobolev}
\norm{\grad\uvel}_{\LR{q}(\torus;\LR{3q/(3-q)}(\R^3))}
+\norm{\upres}_{\LR{q}(\torus;\LR{3q/(3-q)}(\R^3))}
&\leq C\norm{f}_{q}
\qquad\tif q<3,
\\
\label{est:Oseen.tp.mod.R3.fct}
\rey^{1/2}
\norm{\uvel}_{\LR{q}(\torus;\LR{2q/(2-q)}(\R^3))}
&\leq C\norm{f}_{q}
\qquad\tif q<2,
\\
\label{est:Oseen.tp.mod.R3.fct.Sobolev}
\norm{\uvel}_{\LR{q}(\torus;\LR{3q/(3-2q)}(\R^3))}
&\leq C\norm{f}_{q}
\qquad\tif q<3/2,
\end{align}
for the constant $C>0$ from \eqref{eq:Oseen.rot.res.R3.const}.
Moreover, if $\np{\wvel,\wpres}\in\LRloc{1}(\torus\times\R^3)^{3+1}$
is another distributional solution
to \eqref{sys:Oseen.tp.mod.R3}, then the following holds:
\begin{enumerate}[label=\roman*.]
\item
\label{it:Oseen.tp.mod.R3.uniqueness.i}
If $\grad^2\wvel,\ is\wvel+\pt\wvel,\,\partial_1\wvel\in\LR{q}(\torus\times\R^3)$,
then 
\[
is\wvel+\pt\wvel=is\uvel+\pt\uvel,
\qquad
\grad^2\wvel=\grad^2\uvel,
\qquad
\partial_1\wvel=\partial_1\uvel
\qquad
\grad\wpres=\grad\upres.
\]
\item
\label{it:Oseen.tp.mod.R3.uniqueness.ii}
If $q<2$ or $s\not\in\tay\Z$, and if 
$\wvel\in\LR{1}(\torus;\LR{r}(\R^3)^3)$ for some $r\in[1,\infty)$,
then $\uvel=\wvel$ and $\upres=\wpres+d$ for a (space-independent) function 
$d\colon\torus\to\R$.
\end{enumerate}
\end{thm}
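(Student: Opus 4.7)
The strategy is to construct the solution via Fourier analysis on the locally compact abelian group $\grp=\torus\times\R^3$ and to derive $\LR{q}$ bounds by reducing multiplier estimates on $\grp$ to Euclidean multiplier estimates via the transference principle in Theorem~\ref{thm:TransferencePrinciple}. Since the differential operator in \eqref{sys:Oseen.tp.mod.R3} has constant coefficients, applying $\FT_\grp$ and using the Helmholtz projection yields the explicit representation
\[
\uvelft(k,\xi)
=\frac{1}{i(s+k\tay)+\snorm{\xi}^2-i\rey\xi_1}
\Bp{\idmatrix-\frac{\xi\otimes\xi}{\snorm{\xi}^2}}\ft{f}(k,\xi),
\qquad
\upresft(k,\xi)=\frac{-i\xi\cdot\ft f(k,\xi)}{\snorm{\xi}^2},
\]
where $\tay=\perf$. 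Every quantity occurring in \eqref{est:Oseen.tp.mod.R3}--\eqref{est:Oseen.tp.mod.R3.fct.Sobolev} then takes the form of a Fourier multiplier operator on $\grp$ applied to $f$, whose symbols involve the denominator $\snorm{\xi}^2+i(s+k\tay-\rey\xi_1)$.

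To estimate these operators in $\LR{q}(\grp)$, each symbol is extended continuously from $\Z\times\R^3$ to $\R\times\R^3$ by replacing $k\tay$ with a real variable $\eta$, a Marcinkiewicz-type bound is verified on $\R\times\R^3$, and Theorem~\ref{thm:TransferencePrinciple} is invoked. The principal symbols to analyse are
\[
\frac{\xi_i\xi_j}{\snorm{\xi}^2+i(s+\eta-\rey\xi_1)},
\qquad
\frac{\rey\xi_1}{\snorm{\xi}^2+i(s+\eta-\rey\xi_1)},
\qquad
\frac{i(s+\eta)}{\snorm{\xi}^2+i(s+\eta-\rey\xi_1)},
\]
together with the lower-order symbol $\dist(s,\tay\Z)/\bp{\snorm{\xi}^2+i(s+\eta-\rey\xi_1)}$. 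For the latter the crucial point is that on the lattice $\eta=k\tay$, $k\in\Z$, one has $\snorm{s+k\tay}\geq\dist(s,\tay\Z)$, so the apparent singularity at $\xi=0$ is absorbed after restriction. The Sobolev-type estimates \eqref{est:Oseen.tp.mod.R3.grad}--\eqref{est:Oseen.tp.mod.R3.fct.Sobolev} follow by applying Lemma~\ref{lem:embedding.oseen} pointwise in $t\in\torus$ to the bounds for $\grad^2\uvel$ and $\rey\partial_1\uvel$ from \eqref{est:Oseen.tp.mod.R3}.

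The main difficulty is producing the factored constant $C_0\,P(\rey^2/\dist(s,\tay\Z))$ of \eqref{eq:Oseen.rot.res.R3.const}. This requires careful control of how the Marcinkiewicz norms depend on $s$, $\rey$ and $\tay$: a rescaling in $\xi$ by an appropriate power of $\rey$ exposes the combination $\rey^2/\dist(s,\tay\Z)$ in the symbols, after which the derivative estimates produce a polynomial of degree at most three in this quantity. When $s\in\tay\Z$ the resonant mode $k=-s/\tay$ must be treated separately, because there $s+k\tay=0$: the substitution $\uvel(t,x)\mapsto\e^{-ist}\uvel(t,x)$ is $\per$-periodic precisely because $s\in\tay\Z$ and reduces this mode to the steady Oseen resolvent problem whose theory is available from \cite{KyedGaldi_asplqesoserfI,KyedGaldi_asplqesoserfII}, producing the constant $C_0\,P(\rey^2/\tay)$ in that case.

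For uniqueness, the difference $(\wvel,\wpres)$ of two solutions solves the homogeneous system. Under the hypotheses of \ref{it:Oseen.tp.mod.R3.uniqueness.i}, the multiplier bounds above applied with $f=0$ directly give that the listed derivatives of $\wvel$ and $\wpres$ vanish. For \ref{it:Oseen.tp.mod.R3.uniqueness.ii}, I would expand $\wvel(t,x)=\sum_{k\in\Z}\wvel_k(x)\e^{ik\tay t}$; the previous step, applied mode by mode, shows that each $\wvel_k$ solves a homogeneous steady Oseen resolvent problem with parameter $s+k\tay$. When $s\notin\tay\Z$ this parameter is non-zero for every $k$, and known $\LR{r}$-type uniqueness results for the Oseen resolvent force $\wvel_k=0$ and $\wpres_k$ constant in $x$; when instead $q<2$, one uses the multiplier bounds valid in that range together with Liouville-type arguments for the steady Oseen problem to reach the same conclusion for the potentially surviving resonant mode. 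In either case $\wvel=0$ and $\wpres$ depends only on $t$, yielding \ref{it:Oseen.tp.mod.R3.uniqueness.ii}.
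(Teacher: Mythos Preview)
Your overall architecture---Fourier representation on $\grp$, transference to $\R\times\R^3$, Marcinkiewicz---matches the paper's. But two genuine gaps remain.

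\textbf{Extension of the symbol and the constant.} The extension ``replace $k\tay$ by a real variable $\eta$'' is \emph{not} continuous: the denominator $\snorm{\xi}^2+i(s+\eta-\rey\xi_1)$ vanishes at $(\eta,\xi)=(-s,0)$, so the naive extension has a singularity there and cannot be fed into Theorem~\ref{thm:TransferencePrinciple} as stated. The paper first reduces to $\snorm{s}\leq\tay/2$ via $\np{\uvel,\upres,f}\mapsto\np{\e^{i\tay\ell t}\uvel,\e^{i\tay\ell t}\upres,\e^{i\tay\ell t}f}$ with $\ell\in\Z$ chosen so that $\tilde s\coloneqq s-\tay\ell$ satisfies $\snorm{\tilde s}=\dist(s,\tay\Z)$; after this shift the singular point $-\tilde s/\tay$ lies strictly between lattice points, and one inserts a smooth cut-off $\cutoff\bp{1+\tay\eta/\tilde s}$ that vanishes for $\snorm{\tilde s+\tay\eta}\leq\snorm{\tilde s}/2$ and equals $1$ on $\Z$. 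With this cut-off the Marcinkiewicz derivatives can be estimated by the elementary case split $\snorm{\tilde s+\tay\eta}\gtrless 2\rey\snorm{\xi_1}$, yielding the bound $C_0(1+\rey^2/\snorm{\tilde s})^3=C_0(1+\rey^2/\dist(s,\tay\Z))^3$ directly; no rescaling in $\xi$ is needed, and your ``rescaling'' sketch does not obviously produce this precise exponent. The case $s\in\tay\Z$ is then simply $\tilde s=0$, where one cites the known time-periodic Oseen estimate (\cite{Kyed_mrtpns}, \cite[Theorem~5.1]{EiterKyed_ViscousFlowAroundRigidBodyPerformingTPMotion_2021}) for the full problem on $\grp$, not merely for the single resonant mode.

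\textbf{Uniqueness, part~\ref{it:Oseen.tp.mod.R3.uniqueness.i}.} Your argument ``the multiplier bounds above applied with $f=0$ directly give that the listed derivatives vanish'' is circular: the multiplier formulas and bounds were derived for the \emph{specific} solution you constructed from the symbol, not for an arbitrary $\LRloc{1}$ distributional solution $\np{\wvel,\wpres}$. You cannot presuppose that $\wvel$ is given by the multiplier expression. The paper instead takes the difference $(\tuvel,\tupres)$, observes $\Delta\tupres=0$, hence $\supp\FT_\grp\nb{\tupres}\subset\Z\times\set{0}$, and then from the equation in Fourier space concludes $\supp\FT_\grp\nb{\tuvel}\subset\Z\times\set{0}$ as well. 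Thus $\tuvel(t,\cdot)$ and $\tupres(t,\cdot)$ are polynomials for a.a.~$t$, and the integrability hypotheses in~\ref{it:Oseen.tp.mod.R3.uniqueness.i} and~\ref{it:Oseen.tp.mod.R3.uniqueness.ii} then force the stated identities. Your mode-by-mode idea for~\ref{it:Oseen.tp.mod.R3.uniqueness.ii} is workable, but for~\ref{it:Oseen.tp.mod.R3.uniqueness.i} you need this Liouville-type step.
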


\begin{proof}
We mainly follow the proof of 
\cite[Theorem 4.1]{Eiter2021_StokesResTPFlowRotating},
where a similar existence result was shown for \eqref{sys:Oseen.tp.mod.R3}
in the Stokes case $\rey=0$.
However, the derivation of the estimates 
\eqref{est:Oseen.tp.mod.R3}--\eqref{est:Oseen.tp.mod.R3.fct.Sobolev}
is more involved in the present case $\rey>0$.

At first, let $s\in\R$ and consider $\ell\in\Z$ such that
$\snorm{s-\tay\ell}\leq\tay/2$.
We set $\widetilde{s}=s-\tay\ell$ and $\widetilde{f}(t,x)=f(t,x)\e^{i\tay\ell t}$,
and assume that $\np{\tuvel,\tupres}$
is a solution to \eqref{sys:Oseen.tp.mod.R3} 
satisfying \eqref{est:Oseen.tp.mod.R3}--\eqref{est:Oseen.tp.mod.R3.fct.Sobolev}
with $s$ and $f$ replaced with $\widetilde{s}$ and $\widetilde{f}$,
respectively.
Then 
$\np{\uvel,\upres}$
with $\uvel(t,x)\coloneqq\tuvel(t,x)\e^{-i\tay\ell t}$
and $\upres(t,x)\coloneqq\tupres(t,x)\e^{-i\tay\ell t}$
satisfies the original problem \eqref{sys:Oseen.tp.mod.R3} 
and the corresponding estimates \eqref{est:Oseen.tp.mod.R3}--\eqref{est:Oseen.tp.mod.R3.fct.Sobolev}.
This shows that it suffices to only consider $s\in\R$ with $\snorm{s}\leq\tay/2$.

For $0<\snorm{s}\leq\tay/2$,
we first consider $f\in\SR(\grp)^3$,
where $\grp\coloneqq\torus\times\R^3$.
To derive a representation formula for $\upres$,
we compute the divergence of \eqrefsub{sys:Oseen.tp.mod.R3}{1},
which leads to $\Delta\upres=\Div f$
and, by means of the Fourier transform $\FT_\grp$,
the identity $-\snorm{\xi}^2\FT_\grp\nb{\upres}=i\xi\cdot\FT_\grp\nb{f}$.
This yields
\begin{equation}\label{eq:Oseen.tp.mod.R3.gradp}
\upres
=\iFT_{\grp}\Bb{\frac{-i\xi}{\snorm{\xi}^2}\FT_\grp\nb{f}},
\qquad
\grad\upres
=\iFT_{\grp}\Bb{\frac{\xi\otimes\xi}{\snorm{\xi}^2}\FT_\grp\nb{f}}.
\end{equation}
In particular, $\upres$ is well defined as a distribution in $\TDR(\grp)$ in this way, 
and the continuity of the Riesz transforms $\LR{q}(\R^3)\to\LR{q}(\R^3)$
implies
\begin{equation}\label{est:Oseen.tp.mod.R3.grad.Sobolevp}
\norm{\grad\upres}_{q}\leq C\norm{f}_{q}.
\end{equation}
Next we apply the Fourier transform to \eqrefsub{sys:Oseen.tp.mod.R3}{1}
and conclude
\begin{equation}\label{eq:Oseen.tp.mod.R3.u}
\uvel=\iFT_\grp\bb{m\,\FT_\grp\nb{f-\grad\upres}}
=\iFT_\grp\Bb{m\Bp{\idmatrix-\frac{\xi\otimes\xi}{\snorm{\xi}^2}}\FT_\grp\nb{f}}
\end{equation}
with
\[
m\colon\Z\times\R^3\to\R,\qquad
m(k,\xi)\coloneqq\frac{1}{is+i\tay k -i\rey\xi_1+\snorm{\xi}^2}.
\]
Since $0<\snorm{s}<\tay/2$,
the denominator of $m$ 
has no zeros $(k,\xi)\in\Z\times\R^3$ and is bounded,
so that $\uvel$ is a well-defined distribution in $\TDR(\grp)$.
Moreover, we derive the formulas 
\begin{align*}
is\uvel
&=\iFT_\grp\Bb{m_0\Bp{\idmatrix-\frac{\xi\otimes\xi}{\snorm{\xi}^2}}\FT_\grp\nb{f}},
&m_0(k,\xi)&\coloneqq\frac{is}{is+i\tay k-i\rey\xi_1 +\snorm{\xi}^2},
\\
\pt\uvel
&=\iFT_\grp\Bb{m_1\Bp{\idmatrix-\frac{\xi\otimes\xi}{\snorm{\xi}^2}}\FT_\grp\nb{f}},
&m_1(k,\xi)&\coloneqq\frac{i\tay k}{is+i\tay k-i\rey\xi_1 +\snorm{\xi}^2},
\\
\partial_j\partial_\ell\uvel
&=\iFT_\grp\Bb{m_{j\ell}\Bp{\idmatrix-\frac{\xi\otimes\xi}{\snorm{\xi}^2}}\FT_\grp\nb{f}},
&m_{j\ell}(k,\xi)&\coloneqq\frac{-\xi_j\xi_\ell}{is+i\tay k -i\rey\xi_1+\snorm{\xi}^2},
\\
\rey\partial_1\uvel
&=\iFT_\grp\Bb{m_2\Bp{\idmatrix-\frac{\xi\otimes\xi}{\snorm{\xi}^2}}\FT_\grp\nb{f}},
&m_2(k,\xi)&\coloneqq\frac{i\rey\xi}{is+i\tay k-i\rey\xi_1 +\snorm{\xi}^2}.
\end{align*}
For the $\LR{q}$ estimates, 
we now employ the transference principle from Theorem \ref{thm:TransferencePrinciple}.
We focus on $m_2$ in the following, the other multipliers can be treated in a similar fashion.
We introduce
$\cutoff\in\CRi(\R)$ with $0\leq\cutoff\leq1$ and
such that $\cutoff(x)=0$ for $\snorm{x}\leq 1/2$ and $\cutoff(x)=1$ for $\snorm{x}\geq1$,
and we define $M_2\colon\R\times\R^3\to\C$ by
\[
M_{2}(\eta,\xi)\coloneqq\frac{i\rey\xi_1\,\cutoff\bp{1+\frac{\tay\eta}{s}}}{N(\eta,\xi)},
\qquad
N(\eta,\xi)\coloneqq
is+i\tay \eta -i\rey\xi+\snorm{\xi}^2.
\]
The numerator vanishes in a neighborhood of the 
only zero $\np{\eta,\xi}=\np{-s/\tay,0}$
of the denominator $N$. 
Hence
$M_2$ is a well-defined continuous function with 
$\restriction{M_2}{\Z\times\R^3}=m_2$.
Moreover, $M_2$ vanishes for $\snorm{s+\tay\eta}\leq\snorm{s}/2$,
and for $\snorm{s+\tay\eta}\geq\snorm{s}/2$
we have
\[
\begin{aligned}
\snorm{s+\tay\eta}\geq 2\rey\snorm{\xi_1}
&\implies
\snorml{N(\eta,\xi)}
\geq
\snorm{s+\tay\eta -\rey\xi_1}\geq\half\snorm{s+\tay\eta},
\\
\snorm{s+\tay\eta}\leq 2\rey\snorm{\xi_1}
&\implies
\snorml{N(\eta,\xi)}
\geq\snorm{\xi}^2
\geq\frac{1}{4\rey^2}\snorm{s+\tay\eta}^2
\geq\frac{\snorm{s}}{8\rey^2}\snorm{s+\tay\eta},
\end{aligned}
\]
so that
\[
\frac{\snorm{s+\tay\eta}}{\snorm{N(\eta,\xi)}}
\leq C\bp{1+\frac{\rey^2}{\snorm{s}}}.
\]
Using this estimate, 
for $\snorm{s+\tay\eta}\geq\snorm{s}/2$
we further obtain
\[
\frac{\snorm{s}}{\snorm{N(\eta,\xi)}}
+\frac{\snorm{\tay\eta}}{\snorm{N(\eta,\xi)}}
+\frac{\snorm{\rey\xi_1}}{\snorm{N(\eta,\xi)}}
\leq C\bp{1+\frac{\rey^2}{\snorm{s}}}.
\]
With these estimates at hand, one shows
that
\[
\sup
\setcl{\snorml{
\eta^{\alpha}\xi^{\beta}\partial_\eta^{\alpha}\partial_\xi^\beta
M_2(\eta,\xi)
}}
{\alpha\in\set{0,1},\,
\beta\in\set{0,1}^3,\,
\np{\eta,\xi}\in\R\times\R^3
}
\leq C\Bp{1+\frac{\rey^2}{\snorm{s}}}^3.
\]
By the Marcinkiewicz multiplier theorem 
(see \cite[Corollary 5.2.5]{Grafakos1} for example)
we thus conclude
that $M_2$ is an $\LR{q}(\R\times\R^3)$ multiplier,
and the
transference principle (Theorem~\ref{thm:TransferencePrinciple})
implies that $m_2$ is
an $\LR{q}(\grp)$ multiplier with
\[
\norm{\iFT_{\grp}\bb{m_2\FT_{\grp}\nb{g}}}_{q}
\leq C\Bp{1+\frac{\rey^2}{\snorm{s}}}^3 \norm{g}_{q}
\]
for all $g\in\SR(\grp)$, where $C=C(q)$.
In the same way, we show 
that $m_0$, $m_1$ and $m_{j\ell}$ are multipliers with 
associated norms bounded by $C(1+\rey^2/\snorm{s})^3$.
By combining these estimates with the continuity of the Riesz transforms,
the above representation formulas
yield 
\[
\norm{is\uvel}_{q}+\norm{\pt\uvel}_{q}+\norm{\nabla^2\uvel}_{q}+\rey\norm{\partial_1\uvel}_{q}
\leq C_0\Bp{1+\frac{\rey^2}{\snorm{s}}}^3\norm{f}_{q}
\]
with a constant $C_0=C_0(q)$.
Since $0<\snorm{s}<\tay/2$, the combination of this estimate with \eqref{est:Oseen.tp.mod.R3.grad.Sobolevp}
yields \eqref{est:Oseen.tp.mod.R3}.
Finally, \eqref{est:Oseen.tp.mod.R3.grad.Sobolev} and \eqref{est:Oseen.tp.mod.R3.fct.Sobolev}
follow from Sobolev's inequality,
and \eqref{est:Oseen.tp.mod.R3.grad} and \eqref{est:Oseen.tp.mod.R3.fct}
are implied by Lemma \ref{lem:embedding.oseen}.
In summary, 
for $f\in\SR(\grp)$ we have now constructed a solution to \eqref{sys:Oseen.tp.mod.R3}
with the desired properties.
A classical approximation argument
based on the estimates \eqref{est:Oseen.tp.mod.R3}--\eqref{est:Oseen.tp.mod.R3.fct.Sobolev} 
finally yields the existence of a solution for any $f\in\LR{q}(\grp)$. 

In the case $s=0$,
problem \eqref{sys:Oseen.tp.mod.R3}
reduces to the classical time-periodic Oseen system.
Solutions $\np{\uvel,\upres}$ to this problem
were established in \cite{Kyed_mrtpns},
and the validity of the \textit{a priori} estimate 
\eqref{est:Oseen.tp.mod.R3}
with a constant $C=C_0 P(\rey^2/\tay)$
was derived in the proof of 
\cite[Theorem 5.1]{EiterKyed_ViscousFlowAroundRigidBodyPerformingTPMotion_2021},
where $P$ is a polynomial.
Arguing as above, one can show that $P$ can be chosen in the claimed form.

For the uniqueness assertion,
we let
$\np{\tuvel,\tupres}\coloneqq\np{\uvel-\wvel,\upres-\wpres}\in\LRloc{1}(\grp)^{3+1}$,
which is a solution  
to \eqref{sys:Oseen.tp.mod.R3} with $f=0$.
Computing the divergence of both sides of \eqrefsub{sys:Oseen.tp.mod.R3}{1},
we conclude $\Delta\tupres=0$ and, in particular,
$\supp\FT_\grp\nb{\tupres}\subset\Z\times\set{0}$.
An application of $\FT_\grp$ to \eqrefsub{sys:Oseen.tp.mod.R3}{1} thus
leads to
$(is+i\tay k + \snorm{\xi}^2-i\rey\xi_1)\FT_\grp\nb{\tuvel}=-i\xi\FT_\grp\nb{\tupres}$,
and we deduce
\[
\supp\bb{(is+i\tay k + \snorm{\xi}^2-i\rey\xi_1)\FT_\grp\nb{\tuvel}}
\subset\Z\times\set{0}.
\]
Since $is+i\tay k + \snorm{\xi}^2-i\rey\xi_1$ can only vanish for $\xi=0$, 
we conclude
$\supp\FT_\grp\nb{\tuvel}
\subset\Z\times\set{0}$,
so that
$\supp\FT_{\R^3}\nb{\tuvel}(t,\cdot)\subset\set{0}$
for a.a.~$t\in\torus$ due to $\tuvel\in\LRloc{1}(\grp)$.
Hence, $\tuvel(t,\cdot)$ is a polynomial 
for a.a.~$t\in\torus$. 
In the same way we show that $\tupres(t,\cdot)$ is a polynomial for a.a.~$t\in\torus$.
In case \ref{it:Oseen.tp.mod.R3.uniqueness.i}~
we additionally have $\grad^2\tuvel,\,\partial_1\tuvel,\,\grad\tupres\in\LR{q}(\grp)$,
which is only possible if $\grad^2\tuvel=0$ and $\partial_1\tuvel=\grad\tupres=0$.
In virtue of \eqrefsub{sys:Oseen.tp.mod.R3}{1}, 
this also implies $is\tuvel+\pt\tuvel=0$.
This shows the statement in case \ref{it:Oseen.tp.mod.R3.uniqueness.i}
In case \ref{it:Oseen.tp.mod.R3.uniqueness.ii}~
we have that $\tuvel$ is a polynomial with $\tuvel\in\LR{1}(\torus;\LR{r_0}(\R^3)^3+\LR{r}(\R^3)^3)$
where $r_0=2q/(2-q)$ if $q<2$, and $r_0=q$ if $s\not\in\tay\Z$.
This is only possible if $\tuvel=0$,
and returning to \eqrefsub{sys:Oseen.tp.mod.R3}{1}, we also conclude $\grad\tupres=0$.
In total, this completes the proof.
\end{proof}

Next we introduce the rotation term and
study the problem
\begin{equation}\label{sys:Oseen.tp.rot.mod.R3}
\begin{pdeq}
is\uvel
+\pt\uvel
+\tay\rotterm\uvel
- \Delta \uvel
-\rey\partial_1\uvel
+ \grad \upres 
&= f
&&\tin\torus\times\R^3,
\\
\Div\uvel
&=0 
&& \tin\torus\times\R^3
\end{pdeq}
\end{equation}
in the case that $\tay=\perf$.
By means of a suitable coordinate transform, 
we derive well-posedness of \eqref{sys:Oseen.tp.rot.mod.R3}
by a reduction to problem \eqref{sys:Oseen.tp.mod.R3}.

\begin{thm}\label{thm:Oseen.tp.rot.mod.R3}
Let $q\in(1,\infty)$ and $\rey,\,\per>0$, and let $\tay=\perf$.
For each $f\in\LR{q}(\torus\times\R^3)^3$ there exists 
a solution $\np{\uvel,\upres}$ with
\[
\uvel\in\WSR{1}{q}(\torus;\LRloc{q}(\R^3)^3)\cap\LR{q}(\torus;\WSRloc{2}{q}(\R^3)^3),
\qquad
\upres\in\LR{q}(\torus;\WSRloc{1}{q}(\R^3))
\]
to \eqref{sys:Oseen.tp.rot.mod.R3}
that satisfies
\begin{equation}
\begin{aligned}
\norm{\dist(s,\tay\Z)\,\uvel}_{q}
&+\norm{is\uvel+\pt\uvel+\tay\rotterm\uvel}_{q}
\\
&\qquad\qquad\quad+\norm{\grad^2 \uvel}_{q}
+\rey\norm{\partial_1 \uvel}_{q}
+\norm{\grad \upres}_{q}
\leq C \norm{f}_{q}
\end{aligned}
\label{est:Oseen.tp.rot.mod.R3}
\end{equation}
as well as
\eqref{est:Oseen.tp.mod.R3.grad}--\eqref{est:Oseen.tp.mod.R3.fct.Sobolev}
for the constant $C>0$ from \eqref{eq:Oseen.rot.res.R3.const}.
Moreover, if $\np{\wvel,\wpres}\in\LRloc{1}(\torus\times\R^3)^{3+1}$
is another distributional solution
to \eqref{sys:Oseen.tp.rot.mod.R3}, then the following holds:
\begin{enumerate}[label=\roman*.]
\item
If $\grad^2\wvel,\ is\wvel+\pt\wvel+\tay\rotterm\wvel,\,
\partial_1\wvel\in\LR{q}(\torus\times\R^3)$,
then 
\[
\begin{aligned}
is\wvel+\pt\wvel+\tay\rotterm\wvel&=is\uvel+\pt\uvel+\tay\rotterm\uvel,
\\
\grad^2\wvel=\grad^2\uvel,
\qquad
&\partial_1\wvel=\partial_1\uvel
\qquad
\grad\wpres=\grad\upres.
\end{aligned}
\]
\item
If $q<2$ or $s\not\in\tay\Z$, and if 
$\wvel\in\LR{1}(\torus;\LR{r}(\R^3)^3)$ for some $r\in[1,\infty)$,
then $\uvel=\wvel$ and $\upres=\wpres+d$ for a (space-independent) function 
$d\colon\torus\to\R$.
\end{enumerate}
\end{thm}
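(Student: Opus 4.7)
The strategy is to remove the rotation term in \eqref{sys:Oseen.tp.rot.mod.R3} by a time-dependent rigid rotation of the spatial variable, thereby reducing the problem to \eqref{sys:Oseen.tp.mod.R3}, whose solvability and a priori bounds are supplied by Theorem \ref{thm:Oseen.tp.mod.R3}. Set $Q(t)\coloneqq\exp(\tay t[\eone]_\times)$, the rotation about $\eone$ of angular speed $\tay$, and define
\[
\tuvel(t,y)\coloneqq Q(t)\uvel(t,Q(t)^\transpose y),\quad \tupres(t,y)\coloneqq\upres(t,Q(t)^\transpose y),\quad \tf(t,y)\coloneqq Q(t)f(t,Q(t)^\transpose y).
\]
Because $\tay=\perf$, the rotation $Q$ is $\per$-periodic, so the substitution preserves $\per$-time-periodicity; and since $Q(t)$ is orthogonal, the map $f\mapsto\tf$ is an isometry of $\LR{q}(\torus\times\R^3)^3$, and the corresponding spatial $\LR{r}$-norms of $\uvel$, $\grad\uvel$, $\grad^2\uvel$, $\upres$, $\grad\upres$ are preserved pointwise in $t$.

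Using $Q'(t)=\tay[\eone]_\times Q(t)$, the identity $Q(t)\eone=\eone$, and the chain-rule relation $v\cdot\grad_x\uvel = Q(t)^\transpose\bp{(Q(t)v)\cdot\grad_y\tuvel}$ valid for every $v\in\R^3$, one verifies directly that
\[
Q(t)\bp{\pt\uvel+\tay\rotterm{\uvel}}=\pt\tuvel,\qquad Q(t)\partial_1\uvel=\partial_1\tuvel,
\]
together with $Q(t)(-\Delta\uvel+\grad\upres)=-\Delta\tuvel+\grad\tupres$ and $\Div_x\uvel=\Div_y\tuvel$. Consequently $(\uvel,\upres)$ solves \eqref{sys:Oseen.tp.rot.mod.R3} with data $f$ if and only if $(\tuvel,\tupres)$ solves \eqref{sys:Oseen.tp.mod.R3} with data $\tf$, and Theorem \ref{thm:Oseen.tp.mod.R3} therefore yields a solution of the transformed system enjoying the full set of estimates \eqref{est:Oseen.tp.mod.R3}--\eqref{est:Oseen.tp.mod.R3.fct.Sobolev}. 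Pulling back, each term on the left-hand side of \eqref{est:Oseen.tp.rot.mod.R3} coincides with its $\tuvel$-counterpart; in particular $\norm{is\uvel+\pt\uvel+\tay\rotterm{\uvel}}_q=\norm{is\tuvel+\pt\tuvel}_q$ and $\norm{\grad^2\uvel}_q=\norm{\grad^2\tuvel}_q$, and analogously for the mixed space-time embeddings, yielding \eqref{est:Oseen.tp.rot.mod.R3} and \eqref{est:Oseen.tp.mod.R3.grad}--\eqref{est:Oseen.tp.mod.R3.fct.Sobolev} with the same constant \eqref{eq:Oseen.rot.res.R3.const}.

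Uniqueness is proved by the same substitution: a distributional solution $(\wvel,\wpres)$ to \eqref{sys:Oseen.tp.rot.mod.R3} under the hypotheses of case~i or case~ii is mapped to a distributional solution $(\twvel,\widetilde{\wpres})$ to \eqref{sys:Oseen.tp.mod.R3} satisfying the matching hypothesis, and the corresponding uniqueness statement of Theorem \ref{thm:Oseen.tp.mod.R3} transfers back via the inverse rotation, any space-independent function $d\colon\torus\to\R$ produced in case~ii remaining space-independent under composition with $Q(t)^\transpose$. The one step that is not purely mechanical is the algebraic identity $Q(t)\bp{\pt\uvel+\tay\rotterm{\uvel}}=\pt\tuvel$: the contribution $\tay[\eone]_\times(Q(t)\uvel)$ from differentiating $Q(t)$ must absorb the $\tay\eone\wedge\uvel$ part of the rotation term, while the contribution $-\tay(\eone\wedge y)\cdot\grad_y\tuvel$ from differentiating $Q(t)^\transpose y$ must absorb the $-\tay(\eone\wedge x)\cdot\grad\uvel$ part; both cancellations hinge on $Q(t)\eone=\eone$, i.e.\ on the translation axis and the rotation axis being aligned.
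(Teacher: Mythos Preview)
Your proposal is correct and follows essentially the same approach as the paper: both reduce \eqref{sys:Oseen.tp.rot.mod.R3} to \eqref{sys:Oseen.tp.mod.R3} via the time-dependent rotation $Q(t)=Q_\tay(t)$ about $\eone$, invoke Theorem~\ref{thm:Oseen.tp.mod.R3} for the transformed data, and then pull back the solution and the estimates (and the uniqueness statements) through the inverse rotation. Your explicit discussion of the key algebraic identity $Q(t)\bp{\pt\uvel+\tay\rotterm{\uvel}}=\pt\tuvel$ and of the role of $Q(t)\eone=\eone$ makes the mechanism more transparent than the paper's own proof, which merely refers to a ``direct computation''.
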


\begin{proof}
The proof is based on the idea 
to absorb the rotational term $\tay\rotterm{\uvel}$ into the time derivative
by the coordinate transform
arising from the rotation matrix 
\begin{equation}\label{eq:Q.def}
\rotmatrix_\tay(t)\coloneqq\begin{pmatrix}
1 & 0 & 0\\
0 & \cos(\tay t) & -\sin(\tay t)\\
0 & \sin(\tay t) & \cos(\tay t)
\end{pmatrix}.
\end{equation}
Let $f\in\LR{q}(\torus\times\R^3)^3$ and define the vector field $\widetilde f$ by
\[
\widetilde{f}(t,x)
\coloneqq\rotmatrix_\tay(t) f(t,\rotmatrix_\tay(t)^\transpose x).
\]
Then $\widetilde{f}\in\LR{q}(\torus\times\R^3)^3$ 
since $\torus=\R/\per\Z$ with $\per=\frac{2\pi}{\tay}$.
By Theorem \ref{thm:Oseen.tp.mod.R3}
there exists a solution 
$\np{\tuvel,\tupres}$
to \eqref{sys:Oseen.tp.mod.R3} (with $f$ replaced by $\widetilde f$),
which satisfies the estimates \eqref{est:Oseen.tp.mod.R3}--\eqref{est:Oseen.tp.mod.R3.fct.Sobolev}.
We now define the $\per$-time-periodic functions
\[
\uvel(t,x)\coloneqq\rotmatrix_\tay(t)^\transpose\tuvel(t,\rotmatrix_\tay(t) x),
\qquad
\upres(t,x)\coloneqq\tupres(t,\rotmatrix_\tay(t) x).
\]
Since 
$\ddt\nb{{\rotmatrix}_\tay(t)x}
= \tay\eone\wedge \nb{\rotmatrix_\tay(t)x}
= \rotmatrix_\tay(t)\nb{\tay\eone\wedge x}$ 
for any $x\in\R^3$, a direct computation shows
that $\np{\uvel,\upres}$ is a solution to \eqref{sys:Oseen.tp.rot.mod.R3}
and obeys the estimates \eqref{est:Oseen.tp.rot.mod.R3} and 
\eqref{est:Oseen.tp.mod.R3.grad}--\eqref{est:Oseen.tp.mod.R3.fct.Sobolev}.

For uniqueness, 
we use the above transformation to obtain solutions
$\np{\tuvel,\tupres}$ and $\np{\twvel,\twpres}$ 
to \eqref{sys:Oseen.tp.mod.R3} with the same right-hand side $\widetilde f$.
The statement is then a consequence of
 follows from Theorem \ref{thm:Oseen.tp.mod.R3}.
\end{proof}

Observe that, by simply considering 
$s=0$ in \eqref{sys:Oseen.tp.rot.mod.R3},
we would obtain the original time-periodic problem 
\eqref{sys:Oseen.rot.tp},
and Theorem \ref{thm:Oseen.tp.rot.mod.R3}
yields existence of a unique solution.
However, in Theorem \ref{thm:Oseen.tp.rot.mod.R3}
we required $\tay=\perf$,
and the presented proof does not allow
to choose $\tay$ and $\per$ independently.
To make this possible,
we first consider time-independent solutions 
to \eqref{sys:Oseen.tp.rot.mod.R3},
which are solutions to the resolvent problem
\eqref{sys:Oseen.rot.res.R3}.
In this way, we conclude the proof of Theorem \ref{thm:Oseen.rot.res.R3}.

\begin{proof}[Proof of Theorem \ref{thm:Oseen.rot.res.R3}]
Set $\torus\coloneqq\R/\per\Z$ with $\per=\frac{2\pi}{\tay}$,
let $g\in\LR{q}(\R^3)^3$
and define $f(t,x)\coloneqq g(x)$. 
Then $f\in\LR{q}(\torus\times\R^3)^3$,
and there exists a solution $\np{\uvel,\upres}$
to \eqref{sys:Oseen.tp.rot.mod.R3} 
by Theorem \ref{thm:Oseen.tp.rot.mod.R3}.
Computing the time means
\[
\vvel(x)\coloneqq\int_\torus\uvel(t,x)\,\dt,
\qquad
\vpres(x)\coloneqq\int_\torus\upres(t,x)\,\dt,
\]
we obtain a solution $\np{\vvel,\vpres}$
to \eqref{sys:Oseen.rot.res.R3},
and estimates 
\eqref{est:Oseen.rot.res.R3}--\eqref{est:Oseen.rot.res.R3.fct.Sobolev}
follow directly from 
\eqref{est:Oseen.tp.rot.mod.R3}, 
\eqref{est:Oseen.tp.mod.R3.grad}--\eqref{est:Oseen.tp.mod.R3.fct.Sobolev}.
Since every solution to \eqref{sys:Oseen.rot.res.R3}
is a (time-independent) 
solution to \eqref{sys:Oseen.tp.rot.mod.R3},
the uniqueness statement follows directly from 
Theorem \ref{thm:Oseen.tp.rot.mod.R3}.
\end{proof}

\section{The resolvent problem in an exterior domain}
\label{sec:resprob.extdom}

After having established well-posedness 
of the resolvent problem \eqref{sys:Oseen.rot.res.R3}
in $\R^3$,
we next consider the corresponding problem
in an exterior domain $\Omega\subset\R^3$,
that is,
the resolvent problem \eqref{sys:Oseen.rot.res}
with a purely imaginary resolvent parameter $is$, $s\in\R$.
We first address the question of uniqueness 
of solutions.

\begin{lem}\label{lem:Oseen.rot.res.uniqueness}
Let $\Omega\subset\R^3$ be an exterior domain of class $\CR{1,1}$.
Let $\rey\geq 0$, $\tay>0$, $s\in\R$, 
and let $\np{\vvel,\vpres}$ be a distributional solution to \eqref{sys:Oseen.rot.res}
with $g=0$ and
$\grad^2\vvel,\,\partial_1\vvel,\,\grad\vpres \in\LR{q}(\Omega)$
for some $q\in(1,\infty)$ and $\vvel\in\LR{r}(\Omega)$ for some $r\in(1,\infty)$.
Then 
$\vvel=0$ and $\vpres$ is constant.
\end{lem}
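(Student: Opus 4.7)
The plan is to transfer the uniqueness question from the exterior domain $\Omega$ to the whole space $\R^3$, where Theorem~\ref{thm:Oseen.rot.res.R3} applies, and then close with a classical energy identity exploiting $\vvel|_{\partial\Omega}=0$.

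\emph{Reduction to $\R^3$.}
Choose $R>0$ so large that $\partial\Omega\subset\ball_R$, and pick $\cutoff\in\CRci(\R^3)$ with $\cutoff=1$ on $\ball_R$ and $\supp\cutoff\subset\ball_{2R}$. Since $\vvel|_{\partial\Omega}=0$ and $\Div\vvel=0$, the divergence theorem yields $\int_{\R^3}\grad\cutoff\cdot\vvel\,\dx=0$, so Bogovski\u\i's operator on the annulus $A\coloneqq\ball_{2R}\setminus\overline{\ball_R}$ produces $\wvel_B\in\WSRN{1}{q}(A)\cap\WSR{2}{q}(A)$ compactly supported in $\overline{A}$ with $\Div\wvel_B=-\grad\cutoff\cdot\vvel$. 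Set $\wvel\coloneqq(1-\cutoff)\vvel-\wvel_B$ and $\wpres\coloneqq(1-\cutoff)\vpres$, both extended by zero across $\partial\Omega$. Then $\wvel$ is solenoidal on $\R^3$, satisfies $\wvel\in\LR{r}(\R^3)$ and $\grad^2\wvel,\,\partial_1\wvel,\,\grad\wpres\in\LR{q}(\R^3)$, and solves the whole-space system
\[
is\wvel+\tay\rotterm{\wvel}-\Delta\wvel-\rey\partial_1\wvel+\grad\wpres=F\quad\tin\R^3,
\]
where $F$ collects the commutator terms arising from $\cutoff$ and the image of $\wvel_B$ under the momentum operator. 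By interior regularity of $\vvel$ together with Bogovski\u\i's estimates, $F$ is compactly supported in $\overline{A}$ and belongs to $\LR{p}(\R^3)$ for every $p\in(1,\infty)$.

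\emph{Integrability bootstrap.}
Fix an auxiliary exponent $\tq\in(1,\min\set{r,2})$ and apply Theorem~\ref{thm:Oseen.rot.res.R3} with right-hand side $F\in\LR{\tq}(\R^3)$ to produce a solution $(\wvel^\star,\wpres^\star)$ enjoying all the integrability displayed in \eqref{est:Oseen.rot.res.R3}--\eqref{est:Oseen.rot.res.R3.fct.Sobolev}. Since $\tq<2$ and $\wvel\in\LR{r}(\R^3)$, uniqueness part (ii) of that theorem gives $\wvel=\wvel^\star$ and $\wpres=\wpres^\star+c$ for some $c\in\R$. Hence $\wvel$ inherits the embeddings of Theorem~\ref{thm:Oseen.rot.res.R3}; interpolating these with $\wvel\in\LR{r}(\R^3)$ (or iterating the reduction with a different choice of $\tq$) yields $\wvel,\grad\wvel\in\LR{2}(\R^3)$. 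Together with the fact that $\cutoff\vvel+\wvel_B$ belongs to $\WSR{2}{q}(\Omega)$ with compact support, the identity $\vvel=\cutoff\vvel+\wvel+\wvel_B$ on $\Omega$ gives $\vvel,\grad\vvel\in\LR{2}(\Omega)$.

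\emph{Energy identity.}
Let $\cutoffgamma_\rho$ be a \emph{radial} cutoff with $\cutoffgamma_\rho=1$ on $\ball_\rho$, $\supp\cutoffgamma_\rho\subset\ball_{2\rho}$, and $\snorm{\grad\cutoffgamma_\rho}\leq C/\rho$. Test \eqref{sys:Oseen.rot.res} (with $g=0$) against $\overline\vvel\,\cutoffgamma_\rho^2$, integrate by parts using $\vvel|_{\partial\Omega}=0$ and $\Div\vvel=0$, and take real parts. The resolvent and Coriolis terms drop as purely imaginary; the pressure term vanishes by $\Div\vvel=0$; and the transport contribution reduces to $\tfrac{\tay}{2}\int\snorm{\vvel}^2(\eone\wedge x)\cdot\grad\cutoffgamma_\rho^2\,\dx$, which is identically zero because $\grad\cutoffgamma_\rho^2\parallel x$ by radial symmetry while $(\eone\wedge x)\cdot x=0$. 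The Oseen remainder $\tfrac{\rey}{2}\int\snorm{\vvel}^2\partial_1\cutoffgamma_\rho^2\,\dx$ and the Laplacian remainder $\int\grad\vvel\cdot\overline\vvel\,\grad\cutoffgamma_\rho^2\,\dx$ tend to zero as $\rho\to\infty$ by the $\LR{2}$ control from the previous step. What survives is $\int_\Omega\snorm{\grad\vvel}^2\,\dx=0$, so $\vvel$ is constant; the boundary condition forces $\vvel=0$, and the momentum equation then yields $\grad\vpres=0$.

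The main obstacle is the integrability bootstrap: uniqueness part (ii) of Theorem~\ref{thm:Oseen.rot.res.R3} requires the auxiliary exponent to lie in $(1,2)$ when $s\in\tay\Z$, so landing $\wvel$ in $\LR{2}(\R^3)$ through the Sobolev-type embeddings of the theorem is borderline and demands a careful interpolation with the input integrability $\vvel\in\LR{r}(\Omega)$, or a two-step iteration of the cutoff reduction. In the energy step, the radial symmetry of $\cutoffgamma_\rho$ is essential to offset the unbounded coefficient $\eone\wedge x$ in the transport term.
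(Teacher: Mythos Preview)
Your strategy---cut-off reduction to $\R^3$, a regularity bootstrap through Theorem~\ref{thm:Oseen.rot.res.R3}, then an energy identity---is exactly the route the paper takes. Two execution points need correction, however.

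First, the pressure term does \emph{not} vanish when you test against $\overline\vvel\,\cutoffgamma_\rho^2$: integration by parts leaves the remainder $-\int_\Omega\vpres\,\overline\vvel\cdot\grad\cutoffgamma_\rho^2\,\dx$, which must be sent to zero alongside the Oseen and Laplacian remainders using the bootstrapped integrability of $\vpres$ (the additive constant contributes nothing because $\int_\Omega\overline\vvel\cdot\grad\cutoffgamma_\rho^2\,\dx=0$ by $\Div\vvel=0$ and $\vvel|_{\partial\Omega}=0$).

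Second, you will not obtain $\vvel\in\LR{2}(\Omega)$ from Theorem~\ref{thm:Oseen.rot.res.R3}: the exponent $2\tq/(2-\tq)$ always exceeds $2$ for $\tq\in(1,2)$, and interpolation with the given $\LR{r}$ membership only helps when $r<2$, which is not assumed. The paper does not claim $\vvel\in\LR{2}$ either; it records $\vvel\in\LR{r}(\Omega)$ for all $r\in(3,\infty)$ and $\grad\vvel\in\LR{r}(\Omega)$ for all $r\in(3/2,6)$, integrates over $\Omega_R$, and lets the surface integrals on $\partial\ball_R$ vanish. In your cutoff formulation the fix is to apply Theorem~\ref{thm:Oseen.rot.res.R3} with several exponents $\tq\in(1,6/5]$ (the compactly supported $F$ lies in every $\LR{\tq}$, and uniqueness part~(ii) identifies the outputs), which yields $\vvel\in\LR{p}(\Omega)$ for some $p\in(2,3)$, $\grad\vvel\in\LR{2}(\Omega)$, and $\vpres$ (modulo a constant) in $\LR{2}(\Omega)$. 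With these exponents all three annular remainders are $O(\rho^{-\epsilon})$ for some $\epsilon>0$, and the argument closes.
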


\begin{proof}
For $\rey=0$, the statement was shown in \cite[Lemma 5.1]{Eiter2021_StokesResTPFlowRotating}.
For $\rey>0$ one can follow the proof of
\cite[Lemma 5.6]{EiterKyed_ViscousFlowAroundRigidBodyPerformingTPMotion_2021}, 
which treats the case $s\in\tay\Z$.
Therefore, we only sketch the main arguments here.
One first employs a cut-off argument
that leads to a Stokes problem in a bounded domain
and to the resolvent problem \eqref{sys:Oseen.rot.res.R3}
in the whole space,
both with error terms on the right-hand side.
Using elliptic regularity 
of the Stokes problem
and regularity properties 
for \eqref{sys:Oseen.rot.res.R3}
established in Theorem \ref{thm:Oseen.rot.res.R3},
one can show that
\[
\begin{aligned}
&\forall r\in(1,2): 
&&
is\vvel+\rottermsimple{\vvel},\,\grad^2\vvel,\,\grad\vpres
\in\LR{r}(\Omega),
\\
&\forall r\in\bp{\frac{3}{2},6}: 
&&
\grad\vvel\in\LR{r}(\Omega), 
\\
&\forall r\in(3,\infty) : 
&&
\vvel\in\LR{r}(\Omega).
\end{aligned}
\]
These regularities allow us to multiply 
\eqrefsub{sys:Oseen.rot.res}{1}
the complex conjugate of $\vvel$,
to integrate the resulting identity over $\Omega_R$,
and to pass to the limit $R\to\infty$.
This leads to 
\[
0 
= is\int_{\Omega}\snorm{\vvel}^2\,\dx
+\int_{\Omega}\snorm{\grad\vvel}^2 \,\dx,
\]
so that $\grad\vvel=0$,
which implies $\vvel=0$ in view of \eqrefsub{sys:Oseen.rot.res}{3}.
From \eqrefsub{sys:Oseen.rot.res}{1}
we finally conclude $\grad\vpres=0$.
\end{proof}

Suitable \textit{a priori} estimates 
for solutions to \eqref{sys:Oseen.rot.res}
can be derived by a similar cut-off procedure,
which first leads to the following intermediate result.
For simplicity, we only consider the case $q<2$,
where estimates of $\vvel$ and $\grad\vvel$ are available.

\begin{lem}\label{lem:Oseen.rot.res.estimates.pert}
Let $\Omega\subset\R^3$ be an exterior domain of class $\CR{1,1}$,
and $\rey,\tay>0$ and $s\in\R$. 
Let $q\in(1,2)$ and $g\in\LR{q}(\Omega)^3$.
Consider a solution $\np{\vvel,\vpres}$ 
to \eqref{sys:Oseen.rot.res} that satisfies
\[
is\vvel+\tay\rotterm\vvel, \
\grad^2\vvel,\
\partial_1\vvel,\
\grad\vpres
\in\LR{q}(\Omega)^3
\]
and $\vvel\in\LR{r}(\Omega)^3$, $\vpres\in\LR{\overline{r}}(\Omega)$ for some 
$r,\overline{r}\in(1,\infty)$.
Fix $R>0$ such that $\partial\Omega\subset\ball_{R}$.
Then $\np{\vvel,\vpres}$ satisfies the estimate
\begin{equation}
\begin{aligned}
&\norm{\dist(s,\tay\Z) \,\vvel}_q
+\norm{is\vvel+\tay\rotterm\vvel}_{q}
+\norm{\grad^2 \vvel}_{q}
+\rey\norm{\partial_1 \vvel}_{q}
+\norm{\grad \vpres}_{q}
\\
&\qquad\qquad\qquad
+\rey^{1/4}\norm{\grad\vvel}_{4q/(4-q)}
+\rey^{1/2}\norm{\vvel}_{2q/(2-q)}
+\norm{\vpres}_{3q/(3-q)}
\\
&\qquad\qquad\qquad\qquad
\leq C \bp{
\norm{g}_{q}+ \np{1+\rey+\tay}\norm{\vvel}_{1,q;\Omega_R}+\norm{\vpres}_{q;\Omega_R}
+\snorm{s}\norm{\vvel}_{-1,q;\Omega_R}
}
\end{aligned}
\label{est:Oseen.rot.res.extdom.pert}
\end{equation}
for a constant $C>0$
given by \eqref{eq:Oseen.rot.res.R3.const}
with $C_0=C_0(q,\Omega,R)>0$ and  $P(\theta)\coloneqq(1+\theta)^3$.
\end{lem}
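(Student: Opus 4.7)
The plan is to reduce the exterior-domain estimate to the whole-space estimate from Theorem \ref{thm:Oseen.rot.res.R3} by a standard cut-off and Bogovski-correction argument, and to handle the bounded part by elliptic regularity of the Stokes system. Pick radii $R < R_1 < R_2$ with $\partial\Omega \subset \ball_{R}$, and choose $\cutoff\in\CRi(\R^3)$ with $\cutoff=0$ on $\ball_{R_1}$ and $\cutoff=1$ outside $\ball_{R_2}$. Let $\bogopr$ denote the Bogovski\u{\i} operator on the annulus $\OprA\coloneqq\ball_{R_2}\setminus\overline{\ball}_{R_1}$, and set
\[
\Vvel\coloneqq\cutoff\vvel-\bogopr\np{\vvel\cdot\grad\cutoff},
\qquad
\Vpres\coloneqq\cutoff\vpres.
\]
Since $\vvel\cdot\grad\cutoff$ has compact support in $\OprA$ and zero mean (by $\Div\vvel=0$ and integration by parts), $\Vvel$ extended by zero is divergence-free on $\R^3$, agrees with $\vvel$ outside $\ball_{R_2}$, and vanishes near $\partial\Omega$. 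A direct computation shows that $(\Vvel,\Vpres)$, extended by zero, solves \eqref{sys:Oseen.rot.res.R3} with right-hand side
\[
\uG=\cutoff g + \calr(\vvel,\vpres),
\]
where the commutator $\calr$ collects lower-order terms supported in $\OprA$, all involving $\cutoff$ or its derivatives paired with $\vvel$, $\grad\vvel$, $\vpres$, or $\bogopr\np{\vvel\cdot\grad\cutoff}$ and its derivatives.

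Apply Theorem \ref{thm:Oseen.rot.res.R3} to $(\Vvel,\Vpres)$ to obtain the whole estimate \eqref{est:Oseen.rot.res.R3}--\eqref{est:Oseen.rot.res.R3.fct.Sobolev}, with constant of the claimed form $C_0\,P\bp{\rey^2/\dist(s,\tay\Z)}$ (or $C_0\,P(\rey^2/\tay)$ if $s\in\tay\Z$). Since $\Vvel=\vvel$ and $\Vpres=\vpres$ outside $\ball_{R_2}$, this yields the desired bounds on $\vvel$, $\vpres$ in the \emph{exterior} of $\ball_{R_2}$. For the inner region $\Omega_{R_2}$, define the complementary pair
\[
\Wvel\coloneqq(1-\cutoff)\vvel+\bogopr\np{\vvel\cdot\grad\cutoff},
\qquad
\Wpres\coloneqq(1-\cutoff)\vpres,
\]
which is a solution to a classical Stokes problem on the bounded domain $\Omega_{R_2}$ with homogeneous boundary data and with right-hand side obtained from $g$, the lower-order terms in \eqref{sys:Oseen.rot.res} (including $is\vvel$, $\tay\rotterm{\vvel}$, and $\rey\partial_1\vvel$), and the commutator terms. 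Elliptic $\WSR{2}{q}$-regularity for the Stokes system in bounded $\CR{1,1}$-domains then controls $\norm{\grad^2\Wvel}_q+\norm{\grad\Wpres}_q$ by the $\LR{q}$-norm of the right-hand side. Combining the two bounds via $\vvel=\Vvel+\Wvel$ (and analogously for $\vpres$), and invoking Sobolev embeddings on $\Omega_{R_2}$, produces estimates on the full norms of $\vvel$ and $\vpres$.

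It remains to show that all error terms on the right-hand side can be absorbed into the expression displayed on the right of \eqref{est:Oseen.rot.res.extdom.pert}. The coefficient $\tay\bp{\eone\wedge x}\cdot\grad\cutoff$ is uniformly bounded on $\supp\grad\cutoff\subset\OprA$ by $\tay R_2$, which yields the factor $(1+\rey+\tay)$ multiplying $\norm{\vvel}_{1,q;\Omega_R}$; the Oseen drift contributes the $\rey$-factor, and the remaining derivative terms contribute the $1$. The delicate term is $is\,\bogopr\np{\vvel\cdot\grad\cutoff}$: using the extension of $\bogopr$ as a bounded operator $\WSR{-1}{q}_0(\OprA)\to\LR{q}(\OprA)$ together with the smoothness and compact support of $\grad\cutoff$, one obtains
\[
\bigl\lVert is\,\bogopr\np{\vvel\cdot\grad\cutoff}\bigr\rVert_q
\leq C\snorm{s}\,\norm{\vvel\cdot\grad\cutoff}_{-1,q;\OprA}
\leq C\snorm{s}\,\norm{\vvel}_{-1,q;\Omega_R},
\]
which produces the final term in the right-hand side of \eqref{est:Oseen.rot.res.extdom.pert}. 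The $\norm{\vpres}_{q;\Omega_R}$ contribution arises from $(\grad\cutoff)\vpres$ in the whole-space error.

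I expect the main obstacle to be the careful bookkeeping of the constant dependence, specifically ensuring that the multiplicative constant inherited from Theorem \ref{thm:Oseen.rot.res.R3} still carries the structure $C_0\,P\bp{\rey^2/\dist(s,\tay\Z)}$ and that the dependence on $\Omega$ and $R$ enters only through $C_0$. Since the Bogovski\u{\i} operator norm and the Stokes regularity constant depend on $\OprA$ and $\Omega_{R_2}$ but not on $s$, $\rey$ or $\tay$, all such constants can be absorbed into $C_0=C_0(q,\Omega,R)$, preserving the polynomial factor $P(\theta)=(1+\theta)^3$ coming from the whole-space multiplier analysis.
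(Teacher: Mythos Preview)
Your cut-off plus Bogovski\u{\i} approach is exactly the ``classical cut-off procedure'' the paper invokes, and the identification of the $\snorm{s}\norm{\vvel}_{-1,q;\Omega_R}$ contribution via $is\,\bogopr(\vvel\cdot\grad\cutoff)$ together with the mapping $\bogopr\colon\WSR{-1}{q}\to\LR{q}$ is the right mechanism.

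There is one point where your sketch would not produce the stated estimate. For the interior piece $(\Wvel,\Wpres)$ you propose to move \emph{all} lower-order terms, including $is\vvel$, to the right-hand side of a classical Stokes problem and then apply $\WSR{2}{q}$-regularity. That yields a term $\snorm{s}\norm{(1-\cutoff)\vvel}_{q;\Omega_{R_2}}$ on the right, which is strictly stronger than the claimed $\snorm{s}\norm{\vvel}_{-1,q;\Omega_R}$ and cannot be reduced to it. The correct bookkeeping is to keep $is\Wvel$ on the left and treat the bounded-domain problem as a Stokes \emph{resolvent} problem
\[
is\Wvel-\Delta\Wvel+\grad\Wpres=(1-\cutoff)g-(1-\cutoff)\bb{\tay\rotterm{\vvel}-\rey\partial_1\vvel}+is\,\bogopr(\vvel\cdot\grad\cutoff)+\text{commutators},
\]
for which the uniform estimate $\snorm{s}\norm{\Wvel}_q+\norm{\grad^2\Wvel}_q+\norm{\grad\Wpres}_q\leq C\norm{\text{RHS}}_q$ holds on bounded $\CR{1,1}$-domains with $C$ independent of $s\in\R$. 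Then the only $\snorm{s}$-dependent contribution on the right is again $is\,\bogopr(\vvel\cdot\grad\cutoff)$, which you already know how to bound by $C\snorm{s}\norm{\vvel}_{-1,q;\Omega_R}$. This distinction matters downstream: in Lemma~\ref{lem:Oseen.rot.res.estimates} the compactness argument relies on the embedding $\LR{q}(\Omega_R)\embeds\embeds\WSR{-1}{q}(\Omega_R)$ to pass to the limit in $\snorm{s_j}\norm{\vvel_j}_{-1,q;\Omega_R}$, which would fail with $\snorm{s_j}\norm{\vvel_j}_{q;\Omega_R}$ in its place. Your remarks on the constant structure are correct.
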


\begin{proof}
The estimate
can be shown by a classical cut-off procedure.
We skip the details here
and refer to 
\cite[Lemma 5.7]{EiterKyed_ViscousFlowAroundRigidBodyPerformingTPMotion_2021},
where the special case $s\in\tay\Z$ was considered.
In the present situation one may proceed in the very same way
by invoking estimates \eqref{est:Oseen.rot.res.R3},
\eqref{est:Oseen.rot.res.R3.grad} and \eqref{est:Oseen.rot.res.R3.fct}
as well as the uniqueness result from Lemma \ref{lem:Oseen.rot.res.uniqueness}.
\end{proof}

Next we show how to omit the error terms on the right-hand side of
\eqref{est:Oseen.rot.res.extdom.pert}
by means of a compactness argument
in order to derive estimate \eqref{est:Oseen.rot.res}.
Again we want to keep track of the constant
in this \textit{a priori} estimate
and its dependence on the various parameters.
Observe that the dependence on $\rey$ can only be avoided 
if $q<3/2$.

\begin{lem}\label{lem:Oseen.rot.res.estimates}
In the situation of Lemma \ref{lem:Oseen.rot.res.estimates.pert},
the solution $\np{\vvel,\vpres}$ satisfies the estimate
\eqref{est:Oseen.rot.res}
for a constant $C=C(q,\Omega,\tay,s,\rey)>0$.
If $\tay\in(0,\tay_0]$ for some $\tay_0>0$,
and if
\[
\rey^2\leq\theta\min\setcl{\snorm{s-\tay k}}{k\in\Z,\,s\neq\tay k},
\]
for some $\theta>0$,
then we can choose $C$ such that $C=C(q,\Omega,\tay_0,\theta,\rey)>0$.
If additionally $q\in(1,3/2)$, then $C$ can be chosen such that $C=C(q,\Omega,\tay_0,\theta)>0$.
In particular, then $C$ is independent of $\tay$, $\rey$ and $s$.
\end{lem}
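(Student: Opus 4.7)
The plan is to derive \eqref{est:Oseen.rot.res} from the perturbed estimate \eqref{est:Oseen.rot.res.extdom.pert} by absorbing the local remainder $\norm{\vvel}_{1,q;\Omega_R}+\norm{\vpres}_{q;\Omega_R}+\snorm{s}\norm{\vvel}_{-1,q;\Omega_R}$ into the left-hand side via a compactness-and-contradiction argument in the spirit of Peetre--Tartar. For the first assertion I fix $\rey$, $\tay$, $s$ and suppose that \eqref{est:Oseen.rot.res} fails: there are solutions $(\vvel_n,\vpres_n)$ with data $g_n\in\LR{q}(\Omega)^3$ such that the left-hand side of \eqref{est:Oseen.rot.res} evaluated at $(\vvel_n,\vpres_n)$ equals $1$ while $\norm{g_n}_q\to 0$. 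Inserting this into \eqref{est:Oseen.rot.res.extdom.pert} gives
\begin{equation*}
c_0\le\norm{\vvel_n}_{1,q;\Omega_R}+\norm{\vpres_n}_{q;\Omega_R}+\snorm{s}\norm{\vvel_n}_{-1,q;\Omega_R}
\end{equation*}
for some $c_0>0$ and all $n$ large. The normalization also yields uniform bounds on $(\vvel_n,\vpres_n)$ in $\WSR{2}{q}(\Omega_R)\times\WSR{1}{q}(\Omega_R)$ for every $R$; by Rellich--Kondrachov and a diagonal extraction I pass to a subsequence with $\vvel_n\to\vvel_*$ strongly in $\WSR{1}{q}(\Omega_R)$ and $\vpres_n-c_n\to\vpres_*$ strongly in $\LR{q}(\Omega_R)$ for every $R>0$. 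Letting $n\to\infty$ in \eqref{sys:Oseen.rot.res} distributionally shows that $(\vvel_*,\vpres_*)$ solves the homogeneous problem and lies in the global function classes required by Lemma \ref{lem:Oseen.rot.res.uniqueness}, which forces $\vvel_*=0$ and $\vpres_*$ constant. The strong local convergence then contradicts the lower bound above.

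For uniformity in $s$ and $\tay$ (and, when $q<3/2$, additionally in $\rey$), I run the same argument along sequences $(s_n,\tay_n,\rey_n)$ that satisfy the constraint $\rey_n^2\le\theta\dist(s_n,\tay_n\Z\setminus\{s_n\})$. Under this constraint the polynomial $P\bp{\rey^2/\dist(s,\tay\Z\setminus\{s\})}$ that enters the constant in \eqref{est:Oseen.rot.res.extdom.pert} stays bounded by $P(\theta)$, so the perturbed estimate itself has a uniform constant. Moreover, since $\dist(s,\tay\Z\setminus\{s\})\le\tay\le\tay_0$, the constraint also enforces $\rey_n^2\le\theta\tay_0$, so that, up to subsequences, $\tay_n\to\tay_*\in[0,\tay_0]$ and $\rey_n\to\rey_*\in[0,\sqrt{\theta\tay_0}]$. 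The compactness step and Lemma \ref{lem:Oseen.rot.res.uniqueness} remain valid for every admissible limit configuration, including $\rey_*=0$ or $\tay_*=0$. For the final assertion ($q<3/2$) the additional Sobolev embedding $\WSR{2}{q}(\Omega_R)\embeds\LR{3q/(3-2q)}(\Omega_R)$ supplies uniform-in-$\rey$ local bounds on $\vvel_n$ and $\grad\vvel_n$ without going through the $\rey^{1/2}$- and $\rey^{1/4}$-weighted terms on the left-hand side of \eqref{est:Oseen.rot.res}, which is precisely what is needed to keep the argument compact as $\rey_n\to 0$.

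The main obstacle is to control the term $\snorm{s_n}\norm{\vvel_n}_{-1,q;\Omega_R}$ along the parameter sequence, since $s_n$ is not forced to remain bounded by the constraints. In this regime one combines the normalization $\norm{is_n\vvel_n+\tay_n\rotterm{\vvel_n}}_q\le 1$ with the local bound $\norm{\tay_n\rotterm{\vvel_n}}_{-1,q;\Omega_R}\le C_R\norm{\vvel_n}_{q;\Omega_R}$, valid because $\eone\wedge x$ and $\eone\wedge\,\cdot\,$ are bounded coefficients on the compact set $\Omega_R$, to obtain $\snorm{s_n}\norm{\vvel_n}_{-1,q;\Omega_R}\le C(1+\norm{\vvel_n}_{q;\Omega_R})$, which tends to $0$ along the extracted subsequence producing $\vvel_*=0$ strongly in $\LR{q}(\Omega_R)$. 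Together with the steps above, this closes the contradiction and yields \eqref{est:Oseen.rot.res} with the claimed dependence of the constant $C$.
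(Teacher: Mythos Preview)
Your overall strategy---normalize, invoke the perturbed estimate \eqref{est:Oseen.rot.res.extdom.pert}, extract a locally convergent subsequence, identify the limit as a solution of the homogeneous problem, and derive a contradiction---is the same as the paper's. For the first assertion (fixed $\rey,\tay,s$) your outline is essentially complete. The gaps appear in the uniformity part, and they are substantive.

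\textbf{The case $|s_n|\to\infty$.} Your displayed bound
\[
|s_n|\norm{\vvel_n}_{-1,q;\Omega_R}\le C\bigl(1+\norm{\vvel_n}_{q;\Omega_R}\bigr)
\]
does not tend to $0$: even if $\vvel_n\to 0$ strongly in $\LR{q}(\Omega_R)$, the right-hand side converges to $C$, not to $0$, so the contradiction does not close. Relatedly, you cannot ``pass to the limit distributionally'' in \eqref{sys:Oseen.rot.res} to obtain \emph{the} homogeneous problem when $s_n\to\infty$: the term $is_n\vvel_n$ need not vanish in the limit. What the paper does is upgrade your $\WSR{-1}{q}$-bound to an $\LR{q}(\Omega_R)$-bound for $is_n\vvel_n$ (your own triangle-inequality argument gives this in the $\LR{q}$-norm), extract a weak $\LR{q}(\Omega_R)$-limit $w$ of $is_n\vvel_n$, and pass to the limit to obtain the \emph{modified} equation $w+\tay_\ast\rotterm{\vvel_\ast}-\Delta\vvel_\ast-\rey_\ast\partial_1\vvel_\ast+\grad\vpres_\ast=0$. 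From $|s_n|\norm{\vvel_n}_{q;\Omega_R}\le C$ one first gets $\vvel_\ast=0$, whence $w+\grad\vpres_\ast=0$ with $\Div w=0$ and $w|_{\partial\Omega}=0$; uniqueness of the Helmholtz decomposition then forces $w=0$ and $\grad\vpres_\ast=0$. Only now does the compact embedding $\LR{q}(\Omega_R)\hookrightarrow\WSR{-1}{q}(\Omega_R)$ give $\norm{is_n\vvel_n}_{-1,q;\Omega_R}\to\norm{w}_{-1,q;\Omega_R}=0$, which is what you need on the right-hand side of \eqref{est:Oseen.rot.res.extdom.pert}. Without introducing $w$ this step cannot be completed.

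\textbf{The case $\tay_\ast=0$.} You claim that Lemma~\ref{lem:Oseen.rot.res.uniqueness} ``remains valid for every admissible limit configuration, including $\tay_\ast=0$'', but that lemma is stated for $\tay>0$. When $\tay_\ast=0$ (which forces $\rey_\ast=0$ by the constraint), the limit system is the Stokes resolvent problem (if $|s_\ast|<\infty$, $s_\ast\neq 0$) or the steady Stokes problem (if $s_\ast=0$), and one has to invoke the corresponding uniqueness results separately, together with the global integrability $\vvel_\ast\in\LR{3q/(3-2q)}(\Omega)$ supplied by Sobolev's inequality when $q<3/2$. The paper carries out exactly this case distinction (cases i--iii), which your sketch elides.
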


\begin{proof}
We perform a contradiction argument.
Consider the case $q\in(1,3/2)$ at first,
and assume that there is no constant $C$ with the claimed properties.
Then there exist sequences 
$\np{s_j}\subset\R$, $\np{\tay_j}\subset(0,\tay_0]$,
$\np{\rey_j}\subset(0,\infty)$
with 
\begin{equation}\label{est:reyj}
\rey_j^2\leq
\theta\min\setcl{\snorm{s_j-\tay_j k}}{k\in\Z,\,s_j\neq\tay_j k}
\end{equation}
and sequences
$\np{\vvel_j}\subset\WSRloc{2}{q}(\Omega)^3$, 
$\np{\vpres_j}\subset\WSRloc{1}{q}(\Omega)$,
$\np{g_j}\subset\LR{q}(\Omega)^3$
such that
\begin{equation}\label{sys:Oseen.rot.res.seq}
\begin{pdeq}
is_j\vvel_j+\tay_j\rotterm{\vvel_j} 
- \Delta \vvel_j
-\rey_j\partial_1\vvel_j
+ \grad \vpres_j
 &= g_j
&& \tin \Omega, \\
\Div\vvel_j&=0
&& \tin\Omega, \\
\vvel_j&=0
&& \ton \partial\Omega
\end{pdeq}
\end{equation}
as well as
\begin{equation}\label{eq:Oseen.rot.res.extdom.seqnorm}
\begin{aligned}
&\norm{\dist(s_j,\tay_j\Z) \,\vvel_j}_q
+\norm{is_j\vvel_j{+}\tay_j\rotterm{\vvel_j}}_{q}
+\norm{\grad^2 \vvel_j}_{q}
+\rey_j\norm{\partial_1\vvel_j}_{q}
\\
&\qquad\qquad\quad
+\norm{\grad \vpres_j}_{q}
+\rey^{1/4}\norm{\grad\vvel_j}_{4q/(4-q)}
+\rey^{1/2}\norm{\vvel_j}_{2q/(2-q)}
+\norm{\vpres_j}_{3q/(3-q)}
=1
\end{aligned}
\end{equation}
and 
\[
\lim_{j\to\infty}\norm{g_j}_{q}
=0.
\]
Moreover, there are sequences $(r_j),\,(\overline{r}_j)\subset(1,\infty)$
such that
$\vvel_j\in\LR{r_j}(\Omega)^3$, $\vpres_j\in\LR{\overline{r}_j}(\Omega)$.
Here we used that the left-hand side of \eqref{eq:Oseen.rot.res.extdom.seqnorm} 
is finite due to Lemma \ref{lem:Oseen.rot.res.estimates.pert},
so that it can be normalized to $1$.
Note that \eqref{est:reyj} implies $\rey_j^2\leq\theta\tay_j^2\leq\theta\tay_0^2$ for all $j\in\N$.
Upon the choice of a subsequence, we thus conclude
the convergence of the sequences
$\tay_j\to\tay\in[0,\tay_0]$,
$\rey_j\to\rey\in[0,\sqrt{\theta\tay}]$, 
$s_j\to s\in[-\infty,\infty]$ 
and $\dist(s_j,\tay_j\Z)\to\delta\in[-\tay/2,\tay/2]$
as $j\to\infty$.
For the moment, fix $R>0$ with $\partial\Omega\subset\ball_R$.
In virtue of \eqref{eq:Oseen.rot.res.extdom.seqnorm}
and the estimate
\begin{equation}\label{est:isjvj}
\begin{aligned}
&\norm{is_j\vvel_j}_{q;\Omega_R}\\
&\quad\leq \norm{is_j\vvel_j+\tay_j\rotterm{\vvel_j}}_{q;\Omega_R}
+\norm{\tay_j\rotterm{\vvel_j}}_{q;\Omega_R}
\\
&\quad\leq \norm{is_j\vvel_j+\tay_j\rotterm{\vvel_j}}_{q} 
+ \tay_0\bp{\norm{\vvel_j}_{q;\Omega_R}
+ R\norm{\grad\vvel_j}_{q;\Omega_R}},
\end{aligned}
\end{equation}
the sequences 
$(\restriction{is\vvel_j}{\Omega_R})$, 
$(\restriction{\vvel_j}{\Omega_R})$ and
$(\restriction{\vpres_j}{\Omega_R})$
are bounded in 
$\LR{q}(\Omega_R)$, $\WSR{2}{q}(\Omega_R)$ and $\WSR{1}{q}(\Omega_R)$, respectively.
Upon selecting suitable subsequences,
we thus infer the existence of
$\wvel\in\LRloc{q}(\Omega)^3$, 
$\vvel\in\WSRloc{2}{q}(\Omega)^3$ 
and $\vpres\in\WSRloc{1}{q}(\Omega)$
such that
\[
is_j\vvel_j\wto\wvel 
\quad\tin\LR{q}(\Omega_R),
\qquad
\vvel_j\wto\vvel
\quad\tin\WSR{2}{q}(\Omega_R),
\qquad
\vpres_j\wto\vpres
\quad\tin\WSR{1}{q}(\Omega_R).
\]
By a Cantor diagonalization argument one can find the limit functions $\wvel$, $\vvel$, $\vpres$
that are independent of the radius $R>0$.
Moreover, due to the uniform bounds in \eqref{eq:Oseen.rot.res.extdom.seqnorm},
we can assume weak convergence in the norms on the left-hand side 
of \eqref{eq:Oseen.rot.res.extdom.seqnorm},
which yields
\[
\begin{aligned}
&\norm{\delta\vvel}_q
+\norm{\wvel+\tay\rotterm{\vvel}}_{q}
+\norm{\grad^2 \vvel}_{q}
+\rey\norm{\partial_1\vvel}_{q}
+\norm{\grad \vpres}_{q}
\\
&\qquad\qquad\qquad\qquad
+\rey^{1/4}\norm{\grad\vvel}_{4q/(4-q)}
+\rey^{1/2}\norm{\vvel}_{2q/(2-q)}
+\norm{\vpres}_{3q/(3-q)}
\leq 1,
\end{aligned}
\]
and Sobolev's inequality yields $\vvel\in\LR{3q/(3-2q)}(\Omega)$
since $q<3/2$.
Moreover, we can pass to the limit $j\to\infty$
in \eqref{sys:Oseen.rot.res.seq}
and conclude
\begin{equation}\label{sys:Oseen.rot.res.extdom.limit}
\begin{pdeq}
\wvel +\tay\rotterm\vvel
- \Delta \vvel
-\rey\partial_1\vvel
+ \grad \vpres 
&= 0
&&\tin\Omega,
\\
\Div\vvel
&=0 
&& \tin\Omega,
\\
\vvel
&=0 
&& \tin\partial\Omega.
\end{pdeq}
\end{equation}
We now distinguish the following cases:
\begin{enumerate}[label=\roman*.]
\item
If $\snorm{s}<\infty$ and $\tay=0$, then $\wvel=is\vvel$ and $\rey=0$
and \eqref{sys:Oseen.rot.res.extdom.limit}
simplifies to the classical Stokes resolvent problem with resolvent parameter $is$.
If $s\neq0$, this yields $is\vvel=\Delta\vvel-\grad\vpres\in\LR{q}(\Omega)$, 
so that $\vvel\in\WSR{2}{q}(\Omega)$.
Uniqueness in this functional framework is well known,
so that $\vvel=\grad\vpres=0$; see \cite{FarwigSohr1994} for example.
If $s=0$, then \eqref{sys:Oseen.rot.res.extdom.limit} 
coincides with the steady-state Stokes problem, and $\vvel\in\LR{3q/(3-2q)}(\Omega)$
implies $\vvel=\grad\vpres=0$;
see \cite[Theorem V.4.6]{GaldiBookNew} 
for example.
\item
If $\snorm{s}<\infty$ and $\tay>0$,
then $\wvel=is\vvel$ and \eqref{sys:Oseen.rot.res.extdom.limit}
coincides with \eqref{sys:Oseen.rot.res} with $g=0$ and $\rey\geq0$.
From Lemma \ref{lem:Oseen.rot.res.uniqueness} and $\vvel\in\LR{3q/(3-2q)}(\Omega)$
we now conclude $\vvel=\grad\vpres=0$.
\item
If $\snorm{s}=\infty$,
we note that estimates \eqref{eq:Oseen.rot.res.extdom.seqnorm} and \eqref{est:isjvj}
imply $\norm{\vvel_j}_{q;\Omega_R}\leq C /\snorm{s_j}$
for some $R$-dependent constant $C$.
Passing to the limit $j\to\infty$ 
and employing that $R$ was arbitrary,
we deduce $\vvel=0$ and \eqref{sys:Oseen.rot.res.extdom.limit}
reduces to $\wvel+\grad\vpres=0$,
which, in particular, yields $\wvel\in\LR{q}(\Omega)$.
Since we also have $\Div\wvel=0$ and $\restriction{\wvel}{\partial\Omega}=0$,
this equality 
corresponds to the Helmholtz decomposition in $\LR{q}(\Omega)$ of the zero function,
which is unique, so that $\wvel=\grad\vpres=0$.
\end{enumerate}
All three cases lead to
$\wvel=\vvel=\grad\vpres=0$,
and $\vpres\in\LR{3q/(3-q)}(\Omega)$ further implies $\vpres=0$.
By Lemma \ref{lem:Oseen.rot.res.estimates.pert}
we further have \eqref{est:Oseen.rot.res.extdom.pert}
with $\vvel$, $\vpres$, $g$ replaced with $\vvel_j$, $\vpres_j$, $g_j$
and the constant $C$ given by $C=C_0(1+\theta)^3$.
In particular, $C$ is uniform in $j\in\N$.
Employing \eqref{eq:Oseen.rot.res.extdom.seqnorm}
and passing to the limit $j\to\infty$ in this inequality,
we thus deduce
\[
1
\leq C \bp{
\np{1+\rey+\tay}\norm{\vvel}_{1,q;\Omega_{R}}+\norm{\vpres}_{q;\Omega_{R}}
+\norm{\wvel}_{-1,q;\Omega_{R}}
}
=0,
\]
which is a contradiction.
This finishes the proof for $q\in(1,3/2)$.
 
In the case $q\in(1,2)$, where the constant $C$ may depend on $\rey$,
the proof follows nearly the same lines,
but we consider a fixed value $\rey>0$ instead of a sequence $(\rey_j)$.
Therefore, the case $\rey=0$ cannot occur in the limit system \eqref{sys:Oseen.rot.res.extdom.limit}
and we have $\vvel\in\LR{2q/(2-q)}(\Omega)$,
which is sufficient to deduce $\wvel=\vvel=\grad\vpres=0$ as before
and to conclude the contradiction argument.
\end{proof}

We next show the existence of a solution to the resolvent problem \eqref{sys:Oseen.rot.res}
for a smooth right-hand side $g$.

\begin{lem}
\label{lem:Oseen.rot.res.exist.smooth}
Let $\Omega\subset\R^3$ be an exterior domain of class $\CR{3}$.
Let $\rey,\,\tay>0$, $s\in\R$ and $g\in\CRci(\Omega)^3$.
Then there exists a solution $\np{\vvel,\vpres}$ to \eqref{sys:Oseen.rot.res}
with
\[
\forall q\in(1,2):\ 
\np{\vvel,\vpres}\in\Xreys(\Omega)\times\Ys(\Omega).
\]
\end{lem}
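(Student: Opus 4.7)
The plan is to construct the solution by approximating $\Omega$ with bounded domains $\Omega_R = \Omega \cap B_R$, solving the corresponding Dirichlet problem on each $\Omega_R$, and then passing to the limit by means of the a priori estimate of Lemma~\ref{lem:Oseen.rot.res.estimates}. Choose $R_0 > 0$ with $\partial\Omega \cup \supp g \subset B_{R_0}$ and, for $R > R_0$, consider problem \eqref{sys:Oseen.rot.res} on $\Omega_R$ with additional homogeneous Dirichlet condition on $\partial B_R$. Since $\Omega_R$ is bounded and the coefficient $\eone \wedge x$ of the rotation term is bounded there, the left-hand side operator is a relatively compact perturbation of the Stokes operator on the divergence-free subspace of $\WSRN{1}{2}(\Omega_R)^3$, so Fredholm theory reduces existence to injectivity. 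Injectivity follows from the standard energy identity: multiplying by $\bar v_R$ and integrating by parts over $\Omega_R$, the contributions of $is v_R\cdot\bar v_R$, of $\tay\rotterm{v_R}\cdot\bar v_R$ (using $\Div(\eone\wedge x)=0$ on the transport part and skew-symmetry of $\eone\wedge\cdot$ on the Coriolis part), of $-\rey\partial_1 v_R\cdot\bar v_R$ and of $\grad p_R\cdot\bar v_R$ all have vanishing real part, leaving $\int_{\Omega_R}\snorm{\grad v_R}^2 = 0$, hence $v_R=0$. This produces a unique pair $(v_R,p_R)$, smooth by elliptic regularity (since $g$ and $\partial\Omega_R$ are smooth), and in particular in $\WSR{2}{q}(\Omega_R)\times \WSR{1}{q}(\Omega_R)$ for every $q\in(1,\infty)$.

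To pass to the limit $R\to\infty$, I would introduce a cut-off $\chi_R \in \CRci(B_R)$ with $\chi_R = 1$ on $B_{R/2}$ and $\snorm{\grad^k \chi_R} \leq C R^{-k}$ for $k=1,2$, and define $\tilde v_R = \chi_R v_R - \bogopr_R[\grad \chi_R \cdot v_R]$, $\tilde p_R = \chi_R p_R$, where $\bogopr_R$ is a Bogovskii-type right inverse of the divergence supported in the shell $B_R \setminus B_{R/2}$. Then $\tilde v_R$ is divergence-free on $\Omega$, vanishes on $\partial\Omega$ and outside $B_R$, and $(\tilde v_R,\tilde p_R)$ solves \eqref{sys:Oseen.rot.res} in $\Omega$ with data $g+e_R$, where $e_R$ is supported in the shell. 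The contributions to $e_R$ from the Laplacian, from $\rey\partial_1$ and from $\grad\tilde p_R$ come with the decay factor $1/R$ from $\grad\chi_R$, while the contribution of $\tay\rotterm{\tilde v_R}$ picks up $\snorm{x}\leq R$ from the rotational coefficient but is again $O(1)$ after combining with the $1/R$ from $\grad \chi_R$; altogether, $\norm{e_R}_q$ is controlled by $\norm{v_R}_{\WSR{1}{q}(B_R\setminus B_{R/2})} + \norm{p_R}_{\LR{q}(B_R\setminus B_{R/2})}$. Applying Lemma~\ref{lem:Oseen.rot.res.estimates} to $(\tilde v_R,\tilde p_R)$ gives $\norm{\tilde v_R}_{\Xreys}+\norm{\tilde p_R}_{\Ys}\leq C(\norm{g}_q+\norm{e_R}_q)$ with $C$ independent of $R$. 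A contradiction scheme parallel to that of Lemma~\ref{lem:Oseen.rot.res.estimates}---normalizing and extracting a weakly convergent subsequence, then invoking the uniqueness result of Lemma~\ref{lem:Oseen.rot.res.uniqueness} to conclude the weak limit vanishes---then absorbs $e_R$ and produces uniform-in-$R$ bounds. A weak-compactness extraction, combined with a Cantor diagonal argument in $q\in(1,2)$, yields the desired solution $(v,p)\in \Xreys(\Omega)\times\Ys(\Omega)$ for every $q\in(1,2)$.

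The main obstacle is the uniform control of the error term $e_R$ as $R\to\infty$. The cut-off-Bogovskii construction couples $e_R$ to $v_R$ on the annular shell, and the rotation term brings in the unbounded coefficient $\eone\wedge x$, whose magnitude on the shell grows as $R$; that this is balanced by the $O(1/R)$ decay of $\grad\chi_R$ is critical for the scheme to close. The final absorption of $e_R$ is not direct but relies on the contradiction argument mirroring the proof of Lemma~\ref{lem:Oseen.rot.res.estimates}, which ultimately rests on the uniqueness result of Lemma~\ref{lem:Oseen.rot.res.uniqueness} applied to the weak limit.
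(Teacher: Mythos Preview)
Your strategy of invading domains, solving on $\Omega_R$, cutting off, and passing to the limit is the same outline the paper follows, but there is a genuine gap in how you obtain the uniform-in-$R$ control needed to pass to the limit. Your attempt to apply Lemma~\ref{lem:Oseen.rot.res.estimates} to the cut-off pair $(\tilde v_R,\tilde p_R)$ only yields $\norm{\tilde v_R}_{\Xreys}+\norm{\tilde p_R}_{\Ys}\leq C\bigl(\norm{g}_q+\norm{e_R}_q\bigr)$, and $\norm{e_R}_q$ is controlled by $\norm{v_R}_{\WSR{1}{q}(\ball_R\setminus \ball_{R/2})}+\norm{p_R}_{\LR{q}(\ball_R\setminus \ball_{R/2})}$, for which you have \emph{no} a priori bound: the Fredholm argument on $\Omega_R$ gives existence with $R$-dependent constants only. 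The contradiction scheme you propose is circular --- the normalisation forces $\norm{\tilde v_R}_{\Xreys}=1$, but the integrability exponents in $\Xreys$ (namely $\LR{2q/(2-q)}$ for $v$ and $\LR{4q/(4-q)}$ for $\grad v$) are not strong enough to make the $\LR{q}$ norm on the shell $\ball_R\setminus \ball_{R/2}$ small as $R\to\infty$; a H\"older estimate shows it grows like a positive power of $R$. So there is nothing forcing $\norm{e_R}_q$ to vanish after normalisation, and no contradiction materialises.

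The missing idea is that the very energy identity you use for injectivity also produces \emph{uniform} estimates: testing with $\overline{v_R}$ on $\Omega_R$ gives $\norm{\grad v_R}_{2;\Omega_R}\leq C\norm{g}_{6/5}$ with $C$ independent of $R$, because the $is$, rotation, drift and pressure terms all drop out exactly as in your uniqueness computation. The paper then tests with $\overline{\projhm\Delta v_R}$ (Helmholtz projection on $\Omega_R$) to bound $\norm{\projhm\Delta v_R}_{2;\Omega_R}$ uniformly, and invokes Heywood's domain-independent inequality $\norm{\grad^2 w}_{2;\Omega_R}\leq C\bigl(\norm{\projhm\Delta w}_{2;\Omega_R}+\norm{\grad w}_{2;\Omega_R}\bigr)$ --- which needs the $\CR{3}$ boundary hypothesis --- to upgrade to a uniform $\WSR{2}{2}$ bound. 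With this $\LR{2}$-based uniform control in hand, the cut-off error is genuinely small, one passes to the limit to obtain a solution on $\Omega$, and \emph{only then} does a separate cut-off argument combined with the uniqueness in Lemma~\ref{lem:Oseen.rot.res.uniqueness} bootstrap the membership in $\Xreys(\Omega)\times\Ys(\Omega)$ for all $q\in(1,2)$.
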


\begin{proof}
In the case $s\in\tay\Z$, existence was shown in
\cite[Lemma 5.11]{EiterKyed_ViscousFlowAroundRigidBodyPerformingTPMotion_2021}
in full detail based on energy estimates and an ``invading domains'' technique
as well as the $\LR{q}$ estimates \eqref{est:Oseen.rot.res},
which we established in 
Lemma \ref{lem:Oseen.rot.res.estimates}.
The proof for general $s\in\R$
follows along the same lines,
and we only give a rough sketch here.

First of all, we choose $R>0$ such that $\partial\Omega\subset\ball_R$.
For $m\in\N$ with $m>R$ we consider the resolvent problem \eqref{sys:Oseen.rot.res} in 
the bounded domain $\Omega_m=\Omega\cap\ball_m$,
which is given by
\[
\begin{pdeq}
is\vvel_m+\tay\rotterm{\vvel_m} 
- \Delta \vvel_m
-\rey\partial_1\vvel_m
+ \grad \vpres_m
 &= g
&& \tin\Omega_m, \\
\Div\vvel_m&=0
&& \tin\Omega_m, \\
\vvel_m&=0
&& \ton \partial\Omega_m.
\end{pdeq}
\]
By formally testing with the complex conjugates of $\vvel_m$ and $\projhm\Delta\vvel_m$,
where $\projhm$ denotes the Helmholtz projection in $\LR{2}(\Omega_m)$,
one can then derive the \textit{a priori} estimates
\[
\begin{aligned}
\norm{\vvel_m}_{6;\Omega_m}+\norm{\grad\vvel_m}_{2;\Omega_m}
&\leq C \norm{g}_{6/5},
\\
\norm{\projhm\Delta\vvel_m}_{2;\Omega_m}
&\leq C \bp{\norm{g}_{6/5}+\norm{g}_{2}},
\end{aligned}
\]
where the constant $C>0$ is independent of $m$.
In order to derive a uniform estimate on the full second-order norm,
we employ the inequality
\[
\norm{\grad^2\wvel}_{2;\Omega_m}
\leq C \np{\norm{\projhm\Delta\wvel}_{2;\Omega_m}
+\norm{\grad\wvel}_{2;\Omega_m}}
\]
for all $\wvel\in\WSRN{1}{2}(\Omega_m)^3\cap\WSR{2}{2}(\Omega_m)^3$ with $\Div\wvel=0$.
Since we assumed $\partial\Omega\in\CR{3}$,
the constant $C$ can be chosen independently of $m$;
see \cite[Lemma 1]{Heywood1980}.
Using these formal \textit{a priori} estimates,
one can then apply a Galerkin method, 
based on a basis of eigenfunctions of the Stokes operator on the bounded domain $\Omega_m$,
to conclude the existence of a solution $\np{\vvel_m,\vpres_m}$,
which satisfies the \textit{a priori} estimate
\[
\norm{\vvel_m}_{6;\Omega_m}+\norm{\grad\vvel_m}_{1,2;\Omega_m}
\leq C \np{\norm{g}_{6/5}+\norm{g}_{2}},
\]
where $C$ is independent of $m$.
After multiplication with suitable cut-off functions,
one can then pass to the limit $m\to\infty$,
which leads to 
a solution $\np{\vvel,\vpres}$ to the original resolvent problem 
\eqref{sys:Oseen.rot.res}.
Finally, another cut-off argument that uses
the uniqueness properties from Lemma \ref{lem:Oseen.rot.res.uniqueness}
reveals that $\np{\vvel,\vpres}\in\Xreys(\Omega)\times\Ys(\Omega)$ for all $q\in(1,2)$.
\end{proof}

Now we can combine the previous lemmas
to conclude the proof Theorem \ref{thm:Oseen.rot.res} by a final approximation argument.

\begin{proof}[Proof of Theorem \ref{thm:Oseen.rot.res}]
In the case $\Omega=\R^3$ the statement follows from Theorem \ref{thm:Oseen.rot.res.R3} above.
If $\Omega\subset\R^3$ is an exterior domain,
the uniqueness statement is a consequence of Lemma \ref{lem:Oseen.rot.res.uniqueness},
and estimate \eqref{est:Oseen.rot.res}
was shown in Lemma \ref{lem:Oseen.rot.res.estimates}.
For existence, let $g\in\LR{q}(\Omega)^3$ and 
consider a sequence $(g_j)\subset\CRci(\Omega)^3$
converging to $g$ in $\LR{q}(\Omega)$.
By Lemma \ref{lem:Oseen.rot.res.exist.smooth}
there exists a solution 
$\np{\vvel_j,\vpres_j}\in\Xreys(\Omega)\times\Ys(\Omega)$
to \eqref{sys:Oseen.rot.res} with $g=g_j$ for each $j\in\N$.
From Lemma \ref{lem:Oseen.rot.res.estimates}
we infer that $\np{\vvel_j,\vpres_j}$ 
is a Cauchy sequence in 
$\Xreys(\Omega)\times\Ys(\Omega)$.
Since this is a Banach space,
there exists a unique limit $\np{\vvel,\vpres}\in\Xreys(\Omega)\times\Ys(\Omega)$,
which is a solution to \eqref{sys:Oseen.rot.res}.
This completes the proof.
\end{proof}

\section{Existence of time-periodic solutions}
\label{sec:time-periodic}
Now we consider the linear and nonlinear 
time-periodic problems \eqref{sys:Oseen.rot.tp}
and \eqref{sys:NS.rot.tp},
and we prove the existence results from Theorem \ref{thm:Oseen.rot.tp}
and Theorem \ref{thm:NS.rot.tp}.
We begin with the well-posedness of the linear problem \eqref{sys:Oseen.rot.tp}.

\begin{proof}[Proof of Theorem \ref{thm:Oseen.rot.tp}]
Let $f\in\AR(\torus;\LR{q}(\Omega)^3)$,
and let $f_k\in\LR{q}(\Omega)^3$, $k\in\Z$, such that
\[
f(t,x)=\sum_{k\in\Z}f_k(x)\e^{i\perf kt}.
\]
By Theorem \ref{thm:Oseen.rot.res},
for each $k\in\Z$
there is a solution $\np{\uvel_k,\upres_k}\in\Xreyk(\Omega)\times\Ys(\Omega)$
to
\begin{equation}\label{sys:Stokes.rot.res.uk}
\begin{pdeq}
i\perfs k\uvel_k+\tay\rotterm{\uvel_k} 
- \Delta\uvel_k
-\rey\partial_1\uvel_k
+ \grad \upres_k
 &= f_k
&& \tin \Omega, \\
\Div\uvel_k&=0
&& \tin\Omega, \\
\uvel_k&=0
&& \ton \partial\Omega,
\end{pdeq}
\end{equation}
which satisfies
\[
\begin{aligned}
&\norm{i\perfs k\uvel_k+\tay\rotterm{\uvel_k}}_{q}
+\norm{\grad^2 \uvel_k}_{q}
+\norm{\grad \uvel_k}_{q}
\\
&\qquad\qquad
+\norm{\grad \uvel_k}_{3q/(3-q)}
+\norm{\uvel_k}_{3q/(3-2q)}
+\norm{\upres_k}_{3q/(3-q)}
\leq C\norm{f_k}_{q}.
\end{aligned}
\]
Since $\perf/\tay\in\Q$,
Proposition \ref{prop:linearcomb} implies that
condition \eqref{est:rey2.tp} can always be satisfied for some $\theta>0$.
Moreover, \eqref{est:rey2.tp} implies \eqref{est:rey2.res}
for all $s\in\perf\Z$,
so that the constant $C$ can be chosen independently of $k\in\Z$.
Therefore,
the series
\[
\uvel(t,x)=\sum_{k\in\Z}\uvel_k(x)\e^{i\perf kt},
\qquad
\upres(t,x)=\sum_{k\in\Z}\upres_k(x)\e^{i\perf kt},
\]
define a pair 
$\np{\uvel,\upres}\in\XzeroT(\torus\times\Omega)\times\YT(\torus\times\Omega)$,
which satisfies estimate \eqref{est:Oseen.rot.tp}
with the same constant $C$
and solves the time-periodic problem \eqref{sys:Oseen.rot.tp}.

To show the uniqueness statement, consider a solution 
$\np{\uvel,\upres}\in\XreyT(\torus\times\Omega)\times\YT(\torus\times\Omega)$
to \eqref{sys:Oseen.rot.tp} with $f=0$.
Then the Fourier coefficients 
$\np{\uvel_k,\upres_k}$, $k\in\Z$,
are elements of $\Xreyk(\Omega)\times\Ys(\Omega)$
and satisfy
\eqref{sys:Stokes.rot.res.uk} with $f_k=0$.
From Theorem \ref{thm:Oseen.rot.res}
we thus conclude $\np{\uvel_k,\upres_k}=\np{0,0}$
for all $k\in\Z$,
so that $\np{\uvel,\upres}=\np{0,0}$. 
This shows uniqueness of the solution and
completes the proof.
\end{proof}

Next we provide the proof of Theorem \ref{thm:NS.rot.tp}
on the existence of a solution to the nonlinear problem \eqref{sys:NS.rot.tp} 
for ``small'' data,
which is based on a fixed-point argument.

\begin{proof}[Proof of Theorem \ref{thm:NS.rot.tp}]
The proof largely follows that of 
\cite[Theorem 2.3]{EiterKyed_ViscousFlowAroundRigidBodyPerformingTPMotion_2021},
where existence of a solution to \eqref{sys:NS.rot.tp} was shown for $\perf=\tay$,
which is why we skip the details here.

Let $R>0$ such that $\partial\Omega\subset\ball_R$,
and let $\varphi\in\CRci(\R^3)$ 
such that $\varphi(x)=1$ for $\snorm{x}\leq R$, and $\varphi(x)=0$ for $\snorm{x}\geq 2R$. 
Define $\Uvel(x)\coloneqq\frac{1}{2}\rot\nb{\np{\rey\eone\wedge x - \tay\eone\snorm{x}^2}\varphi(x)}$.
Then $\np{\uvel,\upres}$ is a solution to \eqref{sys:NS.rot.tp}
if and only if $\np{\wvel,\wpres}\coloneqq\np{\uvel-\Uvel,\upres}$
is a solution to
\begin{equation}\label{sys:NS.lifted}
\begin{pdeq}
\partial_t \wvel + \tay\rotterm{\wvel} 
-\rey\partial_1\wvel
-\Delta \wvel + \grad \wpres 
&= f+\caln(\wvel) 
&& \tin \torus\times\Omega, \\
\Div\wvel&=0
&& \tin \torus\times\Omega, \\
\wvel&=0
&& \ton \torus\times\partial\Omega, \\
\lim_{\snorm{x}\to\infty} \wvel(t,x) &= 0
&& \tfor t\in \R,
\end{pdeq}
\end{equation}
where 
\[
\caln(\wvel)
\coloneqq-\tay\rotterm{\Uvel}+\rey\partial_1\Uvel+\Delta\Uvel-\np{\wvel+\Uvel}\cdot\grad\np{\wvel+\Uvel}.
\]
It now suffices to show existence of a solution $\np{\wvel,\wpres}$ to \eqref{sys:NS.lifted}.

On the space $\XreyT(\torus\times\Omega)$ we define the norm $\norm{\cdot}_{q,\rey,\tay}$ by
\[
\begin{aligned}
\norm{\zvel}_{q,\rey,\tay}
&\coloneqq\norm{\grad^2\zvel}_{\AR(\torus;\LR{q}(\Omega))}
+\norm{\pt\zvel+\tay\rotterm\zvel}_{\AR(\torus;\LR{q}(\Omega))}
+\rey\norm{\partial_1\zvel}_{\AR(\torus;\LR{q}(\Omega))}
\\
&\qquad
+\rey^{1/2}\norm{\zvel}_{\AR(\torus;\LR{2q/(2-q)}(\Omega))}
+\rey^{1/4}\norm{\grad\zvel}_{\AR(\torus;\LR{4q/(4-q)}(\Omega))}.
\end{aligned}
\]
As in \cite[Proof of Theorem 2.3]{EiterKyed_ViscousFlowAroundRigidBodyPerformingTPMotion_2021},
one then derives the estimates
\[
\begin{aligned}
\norm{\caln(\zvel)}_{\AR(\torus;\LR{q}(\Omega))}
&\leq C\bp{
\np{\rey+\tay}\np{1+\rey+\tay+\norm{\zvel}_{q,\rey,\tay}}
+\rey^{-\frac{3q-3}{q}}\norm{\zvel}_{q,\rey,\tay}^2
},
\\
\norm{\caln(\zvel_1)-\caln(\zvel_2)}_{\AR(\torus;\LR{q}(\Omega))}
&\leq C\bp{\rey+\tay
+\rey^{-\frac{3q-3}{q}}\np{
\norm{\zvel_1}_{q,\rey,\tay}+\norm{\zvel_2}_{q,\rey,\tay}
}}
\norm{\zvel_1-\zvel_2}_{q,\rey,\tay}
\end{aligned}
\]
for all $\zvel,\zvel_1,\zvel_2\in\XreyT(\torus\times\Omega)$ and $q\in[\frac{12}{11},\frac{6}{5}]$,
where $C$ is independent of $\rey$ and $\tay$.

Since we assume $\perf/\tay\in\Q$,
Theorem \ref{thm:Oseen.rot.tp}
provides the existence of a solution operator 
$\cals\colon\LR{q}(\torus\times\Omega)^3\to\XreyT(\torus\times\Omega)$
that maps a right-hand side $f\in\LR{q}(\torus\times\Omega)^3$
onto the velocity field $\uvel\in\XreyT(\torus\times\Omega)$ of a solution 
$\np{\uvel,\upres}$ to \eqref{sys:Oseen.rot.tp}.
We further introduce the set 
$A_\delta\coloneqq\setc{\zvel\in\XreyT(\torus\times\Omega)}{\norm{\zvel}_{q,\rey,\tay}\leq\delta}$.
Then $\np{\wvel,\wpres}$ is a solution to \eqref{sys:NS.lifted}
if $\wvel$ is a fixed point of the mapping
$\calf\colon A_\delta\to \XreyT(\torus\times\Omega)$,
$\zvel\mapsto\cals(f+\caln(\zvel))$.
Now let $\rey_0>0$ and $0<\tay\leq\tay_0\coloneqq\kappa\rey_0^\rho$
and assume that \eqref{est:rey2.nonlin} as well as 
$\norm{f}_{\AR(\torus;\LR{q}(\Omega))}<\varepsilon$ are satisfied.
In virtue of the previous estimates and Theorem \ref{thm:Oseen.rot.tp},
we then obtain 
a constant $C$, independent of $\rey$ and $\tay$, such that
\[
\begin{aligned}
\norm{\calf(\zvel)}_{q,\rey,\tay}
&\leq C\bp{
\varepsilon+\np{\rey+\kappa\rey^\rho}\np{1+\rey+\kappa\rey^\rho+\delta}
+\rey^{-\frac{3q-3}{q}}\delta^2
},
\\
\norm{\calf(\zvel_1)-\calf(\zvel_2)}_{q,\rey,\tay}
&\leq C\bp{\rey+\kappa\rey^\rho
+\rey^{-\frac{3q-3}{q}}\delta
}
\norm{\zvel_1-\zvel_2}_{q,\rey,\tay}
\end{aligned}
\]
for $C$ independent of $\rey$ and $\tay$ and for all $\zvel,\zvel_1,\zvel_2\in A_\delta$.
From these estimates one readily derives that $\calf\colon A_\delta\to A_\delta$
is a contractive self-mapping
if we choose $\varepsilon=\rey^2$, $\delta=\rey^\mu$ for some $\mu\in(\frac{3q-3}{q},\rho)$,
and $0<\rey\leq\rey_0$ sufficiently small.
The contraction mapping principle then yields the existence of a unique fixed point
$\wvel\in A_\delta$, 
which finally shows the existence of a solution $\np{\uvel,\upres}$ to \eqref{sys:NS.rot.tp}.
\end{proof}

\section{Construction of the counterexamples}
\label{sec:counterexample}
In this section we 
shall prove Theorem \ref{thm:counterexample.tp} and Theorem \ref{thm:counterexample}.
We first address the latter and
construct an explicit counterexample,
which is a modification of the sequence from the proof of
\cite[Theorem 3.1]{DeuringVarnhorn_OseenResolventEst_2010},
where the Oseen problem \eqref{sys:Oseen.res} was considered.

\begin{proof}[Proof of Theorem \ref{thm:counterexample}]
Fix $\rey>0$ and consider a sequence $(\sigma_n)\subset\R$ 
with $\frac{1}{n}\leq \sigma_n \leq \frac{2}{n}$.
Since we assume that $\alpha/\tay\not\in\Q$,
Corollary \ref{cor:linearcomb.odd}
enables us to choose $\sigma_n=\alpha k_n+\tay\ell_n$ for $k_n,\,\ell_n\in\Z$
with $\ell_n$ odd.
Moreover, we have that $s_n\coloneqq\alpha k_n$
satisfies $\snorm{s_n}\geq\frac{1}{n}$ for all $n\in\N$ sufficiently large.
Indeed, if $\snorm{s_n}<\frac{1}{n}$, 
then $0<\tay\ell_n<\frac{3}{n}$,
which is impossible for large $n\in\N$ and $\ell_n\in\Z$.

For simplicity, we denote the spatial Fourier transform of a function 
$\varphi\in\LR{2}(\R^3)$ by $\ft\varphi\coloneqq\FT_{\R^3}\nb{\varphi}$.
By Plancherel's theorem,
we can define a sequence $(G_n)\subset\LR{2}(\R^3)^3$
by 
\[
\ft G_n(\xi)\coloneqq n^{3/2}\charfct_{I_n}(\xi) \frac{(0,\xi_3,-\xi_2)}{\snorm{\xi'}},
\]
where $\xi'\coloneqq(\xi_2,\xi_3)$ and
\[
I_n\coloneqq \np{\tfrac{1}{2\rey n},\tfrac{1}{\rey n}}\times A_n,
\qquad
A_n\coloneqq\setcl{\xi'=(\xi_2,\xi_3)\in\R^2}
{\snorm{\xi'}<\tfrac{1}{n}, \, \xi_3>0}.
\]
We have $\snorm{A_n}=\frac{\pi}{2n^2}$ 
and $\snorm{I_n}=\frac{\pi}{4\rey n^3}$,
which yields
\begin{equation}
\label{eq:Gnj.norm}
\norm{G_n}_{2}^2
=\norm{\ft G_n}_{2}^2
=n^{3} \snorml{I_n}
=\tfrac{\pi}{4\rey}.
\end{equation}
Now set
\[
\ft\Vvel_n(\xi)\coloneqq\frac{1}{i\sigma_n+\snorm{\xi}^2-i\rey\xi_1}\ft G_n(\xi).
\]
Since $\sigma_n\neq 0$, 
this defines a sequence $(\Vvel_n)\subset\LR{2}(\R^3)^3$ 
by Plancherel's theorem.
With a similar argument,
one shows $\grad^2\Vvel_n\in\LR{2}(\R^3)$.
Additionally, $\Vvel_n$ satisfies
\begin{equation}\label{sys:Oseen.Vn}
i\sigma_n\Vvel_n
- \Delta \Vvel_n
-\rey\partial_1\Vvel_n
= G_n,
\quad
\Div\Vvel_n
=0 
\qquad \tin\R^3,
\end{equation}
where we used $\xi\cdot\ft G_n(\xi)=0$.
In particular, we also have $\partial_1\Vvel_n\in\LR{2}(\R^3)$,
and $\Vvel_n$ satisfies a classical Oseen resolvent problem 
(with pressure $\Vpres_n\equiv 0$).
As in our approach in Section \ref{sec:resprob.wholespace},
we now transform this solution into 
a solution to the resolvent problem \eqref{sys:Oseen.rot.res.R3}.
To this end, we recall the rotation matrix $\rotmatrix_\tay$ 
from \eqref{eq:Q.def},
and we let $\per_\tay=\frac{2\pi}{\tay}$ and $\torus=\R/\per_\tay\Z$ 
denote the associated time period and torus group.
Set
\[
\begin{aligned}
\Uvel_n(t,x)
&\coloneqq\Vvel_n(x)\e^{i\tay\ell_n  t},
&\qquad
F_n(t,x)
&\coloneqq G_n(x)\e^{i\tay\ell_n t},
\\
\uvel_n(t,x)
&\coloneqq\rotmatrix_\tay(t)^\transpose\Uvel_n(t,\rotmatrix_\tay(t)x),
&\qquad
f_n(t,x)
&\coloneqq\rotmatrix_\tay(t)^\transpose F_n(t,\rotmatrix_\tay(t)x),
\\
\vvel_n(x)
&\coloneqq
\int_\torus\uvel_n(t,x)\,\dt,
&\qquad
g_n(x)
&\coloneqq
\int_\torus f_n(t,x)\,\dt.
\end{aligned}
\]
First of all, we have 
$\partial_t\Uvel_n=i\tay\ell_n\Vvel_n=(s_n-i\alpha k_n)\Vvel_n$,
so that
$i\alpha k_n\Uvel_n+\pt\Uvel_n$,
$\grad^2\Uvel_n$,
$\partial_1\Uvel_n\in\LR{2}(\torus\times\R^3)$
and
\[
i\alpha k_n\Uvel_n
+\partial_t\Uvel_n
- \Delta \Uvel_n
-\rey\partial_1\Uvel_n
= F_n,
\quad
\Div\Uvel_n
=0 
\qquad \tin\torus\times\R^3.
\]
Mimicking the calculations from the proof of Theorem \ref{thm:Oseen.tp.rot.mod.R3},
we can further deduce that we have
$i\alpha k_n\uvel_n+\partial_t\uvel_n+\tay\rotterm{\uvel_n},\,
\grad^2\uvel_n,\,
\partial_1\uvel_n\in\LR{2}(\torus\times\R^3)$
and
\[
i\alpha k_n\uvel_n+\partial_t\uvel_n
+\tay\rotterm{\uvel_n}
- \Delta \uvel_n
-\rey\partial_1\uvel_n
= f_n,
\quad
\Div\uvel_n
=0 
\qquad \tin\torus\times\R^3.
\]
Computing the time means over $\torus$, we now deduce
$i\alpha k_n\vvel_n+\tay\rotterm{\vvel_n}$,
$\grad^2\vvel_n$,
$\partial_1\vvel_n\in\LR{2}(\R^3)$
and
\[
i\alpha k_n\vvel_n
+\tay\rotterm{\vvel_n}
- \Delta \vvel_n
-\rey\partial_1\vvel_n
= g_n,
\quad
\Div\vvel_n
=0 
\qquad \tin\R^3.
\]
Hence, $\np{\vvel,\vpres}=\np{\vvel_n,0}$ is a solution 
to the resolvent problem \eqref{sys:Oseen.rot.res.R3}
with $g=g_n$ and $s=s_n=\alpha k_n\in\alpha\Z$,
and it belongs to the asserted function class.
It thus remains to show inequality \eqref{est:counterexample}.
At first, let us address the right-hand side.
Using that $\FT_{\R^3}$ is an isomorphism $\LR{2}(\R^3)\to\LR{2}(\R^3)$
and commutes with the transformation induced by $\rotmatrix_\tay(t)$, we obtain
\[
\norm{g_n}_{2}^2
=\int_{\R^3}\!\snormL{\int_\torus 
\rotmatrix_\tay(t)^\transpose
\ft G_n(\rotmatrix_\tay(t)\xi)
\e^{i\tay\ell_nt}
\dt}^2\dxi
=n^{3}\int_{\R^3}\!\snormL{\int_\torus 
\charfct_{I_n}(\rotmatrix_\tay(t)\xi)
\e^{i\tay\ell_nt}
\dt}^2\dxi.
\]
To first compute the integral over $\torus$, we fix $\xi\in\R^3$ for the moment,
let $\eta=(\xi_1,\snorm{\xi'},0)$ and
choose $\varphi\in[0,\per_\tay)$ such that
$\rotmatrix_\tay(\varphi)\eta=\xi$.
Employing the equivalence
\[
\begin{aligned}
\rotmatrix_\tay(\psi)\eta\in I_n
&\iff \np{\eta_1,\snorm{\eta'},\psi}
\in\np{\tfrac{1}{2\rey n},\tfrac{1}{\rey n}}
\times(0,\tfrac{1}{n})
\times(0,\tfrac{\pi}{\tay})
\\
&\iff
\np{\xi_1,\snorm{\xi'},\psi}
\in\np{\tfrac{1}{2\rey n},\tfrac{1}{\rey n}}
\times(0,\tfrac{1}{n})
\times(0,\tfrac{\pi}{\tay}),
\end{aligned}
\]
we infer
\[
\begin{aligned}
\int_\torus\!\charfct_{I_n}(\rotmatrix_\tay(t)\xi) \e^{i\tay\ell_n t}\dt
&=\int_\torus \charfct_{I_n}(\rotmatrix_\tay(t+\varphi)\eta) \e^{i\tay\ell_n t}\dt
=\e^{-i\tay\ell_n\varphi}
\int_\torus\! \charfct_{I_n}(\rotmatrix_\tay(t)\eta) \e^{i\tay\ell_n t}\dt
\\
&=\e^{-i\tay\ell_n\varphi} 
\,\charfct_{\bp{\tfrac{1}{2\rey n},\tfrac{1}{\rey n}}}(\xi_1)
\charfct_{[0,\tfrac{1}{n})}(\snorm{\xi'})\,
\iper\int_0^{\tfrac{\pi}{\tay}}\e^{i\tay\ell_n t}\,\dt
\\
&=\frac{1}{2\pi i\ell_n}\e^{-i\tay\ell_n\varphi} 
\bp{\e^{i\pi\ell_n}-1}\,
\charfct_{\bp{\tfrac{1}{2\rey n},\tfrac{1}{\rey n}}}(\xi_1)\,
\charfct_{\bp{0,\tfrac{1}{n}}}(\snorm{\xi'}).
\end{aligned}
\]
Since $\ell_n$ is an odd number, we have $\e^{i\pi\ell_n}=-1$
and conclude
\begin{equation}\label{eq:meanvalue.counterex}
\snormL{\int_\torus \charfct_{I_n}(\rotmatrix_\tay(t)\xi) \e^{i\tay\ell_n t}\,\dt}^2
=\frac{1}{\pi^2 \ell_n^2}\,
\charfct_{\bp{\tfrac{1}{2\rey n},\tfrac{1}{\rey n}}}(\xi_1)\,
\charfct_{\bp{0,\tfrac{1}{n}}}(\snorm{\xi'}),
\end{equation}
which leads to
\begin{equation}\label{eq:gn.norm}
\norm{g_n}_{2}^2
=\frac{n^{3}}{\pi^2 \ell_n^2}\int_{\R^3}
\charfct_{\bp{\tfrac{1}{2\rey n},\tfrac{1}{\rey n}}}(\xi_1)\,
\charfct_{\bp{0,\tfrac{1}{n}}}(\snorm{\xi'})\,\dxi
=\frac{1}{2\rey\pi \ell_n^2}.
\end{equation}
Now let us turn to the left-hand side of \eqref{est:counterexample}.
Arguing as above, we obtain
\begin{equation}\label{eq:norm.resrotterm.counterex}
\norm{i\alpha k_n\vvel_n+\tay\rotterm{\vvel_n}}_{2}^2
=s_n^2\!\int_{\R^3}\!\snormL{\int_\torus 
\rotmatrix_\tay(t)^\transpose
\ft\Vvel_n(\rotmatrix_\tay(t)\xi)
\e^{i\tay\ell_nt}
\dt}^2\dxi.
\end{equation}
Again, we focus on the integral over $\torus$ at first.
From the definition of $\ft\Vvel_n$ and
the invariance properties $\snorm{\rotmatrix_\tay(t)\xi}=\snorm{\xi}$
and $\rotmatrix_\tay(t)\eone=\eone$, 
we deduce the identity
\[
\begin{aligned}
\snormL{\int_\torus\rotmatrix_\tay(t)^\transpose
\ft\Vvel_n(\rotmatrix_\tay(t)\xi)\e^{i\tay\ell_n t}\,\dt}^2
&=\snormL{\int_\torus \frac{1}{is_n+\snorm{\xi}^2-i\rey\xi_1}
\rotmatrix_\tay(t)^\transpose
\ft G_n(\rotmatrix_\tay(t)\xi)\e^{i\tay\ell_n t}
\,\dt}^2
\\
&=\frac{n^{3}}{\snorml{is_n+\snorm{\xi}^2-i\rey\xi_1}^2}
\,\snormL{\int_\torus\charfct_{I_n}(\rotmatrix_\tay(t)\xi)
\e^{i\tay\ell_n t}\,\dt}^2
\\
&=\frac{n^3}{\pi^2 \ell_n^2}\,\frac{1}{\snorml{is_n+\snorm{\xi}^2-i\rey\xi_1}^2}\,
\charfct_{\bp{\tfrac{1}{2\rey n},\tfrac{1}{\rey n}}}(\xi_1)\,
\charfct_{\bp{0,\tfrac{1}{n}}}(\snorm{\xi'}),
\end{aligned}
\]
where we invoked \eqref{eq:meanvalue.counterex} for the last equality.
For $\frac{1}{2\rey n}<\xi_1<\frac{1}{\rey n}$ and $\snorm{\xi'}<\tfrac{1}{n}$
we have 
$s_n-\rey\xi_1>\frac{1}{n}-\rey\xi_1\geq0$,
and we can estimate
\[
\begin{aligned}
\frac{1}{\snorml{\snorm{\xi}^2+i(s_n-\rey\xi_1)}^2}
&
\geq\frac{1}{(\xi_1^2+\frac{1}{n^2})^2+(\frac{1}{n}-\rey\xi_1)^2}
\geq\frac{1}{2\xi_1^4+\frac{2}{n^4}+(\frac{1}{n}-\rey\xi_1)^2}\\
&\geq\frac{1}{\bp{\frac{\sqrt{2n^4\xi_1^4+2}}{n^2}+\frac{1}{n}-\rey\xi_1}^2},
\end{aligned}
\]
where the relation $a^2+b^2\leq (a+b)^2 \leq2(a^2+b^2)$ for $a,b\in[0,\infty)$
was employed in the last two estimates.
Returning to identity \eqref{eq:norm.resrotterm.counterex},
we have thus shown
\[
\begin{aligned}
\norm{i\alpha k_n\vvel_n+\tay\rotterm{\vvel_n}}_{2}^2
&\geq\frac{s_n^2 n^3}{\pi^2\ell_n^2}\!
\int_{\frac{1}{2\rey n}}^{\frac{1}{\rey n}}\int_{\snorm{\xi'}<\frac{1}{n}}
\frac{1}{\bp{\frac{\sqrt{2n^4\xi_1^4+2}}{n^2}+\frac{1}{n}-\rey\xi_1}^2}
\,\dxi'\dxi_1
\\
&
=\frac{s_n^2 n}{\pi\ell_n^2}\!
\int_{\frac{1}{2\rey n}}^{\frac{1}{\rey n}}
\frac{1}{\bp{\frac{\sqrt{2n^4\xi_1^4+2}}{n^2}+\frac{1}{n}-\rey\xi_1}^2}
\,\dxi_1.
\end{aligned}
\]
With
the transformation $\xi_1=\tfrac{\theta}{\rey n}$
we further obtain
\[
\begin{aligned}
&\norm{i\alpha k_n\vvel_n+\tay\rotterm{\vvel_n}}_{2}^2
=\frac{s_n^2 n^2}{\rey\pi\ell_n^2}
\int_{\frac{1}{2}}^{1} \!
\frac{1}{\Bp{\frac{1}{n}\sqrt{2+\frac{2\theta^4}{\rey^4}}+1-\theta}^2}
\,\dtheta
\\
&\quad
\geq\frac{s_n^2 n^2}{\rey\pi\ell_n^2}
\!\int_{\frac{1}{2}}^{1} \!
\frac{1}{\Bp{\frac{1}{n}\sqrt{2+\frac{2}{\rey^4}}+1-\theta}^2}
\,\dtheta
=\frac{s_n^2 n^2}{\rey\pi\ell_n^2}
\Bp{\frac{n}{\sqrt{2+\frac{2}{\rey^4}}}
-\frac{1}{\frac{1}{n}\sqrt{2+\frac{2}{\rey^4}}+\tfrac{1}{2}}}\\
&\quad
\geq\frac{s_n^2 n^2}{\rey\pi\ell_n^2}
\Bp{\frac{n}{\sqrt{2+\frac{2}{\rey^4}}}-2}
\end{aligned}
\]
If we choose $n\in\N$ large enough, that is,  
such that $n\geq 4\sqrt{2+\frac{2}{\rey^4}}$,
then
\[
\norm{i\alpha k_n\vvel_n+\tay\rotterm{\vvel_n}}_{2}^2
\geq\frac{s_n^2 n^2}{\rey\pi\ell_n^2}
\frac{n}{2\sqrt{2+\frac{2}{\rey^4}}}
=\frac{s_n^2 n^3}{2\rey\pi\ell_n^2\sqrt{2+\frac{2}{\rey^4}}}.
\]
With $\snorm{s_n}\geq\tfrac{1}{n}$ and \eqref{eq:gn.norm}, this yields
\[
\norm{i\alpha k_n\vvel_n+\tay\rotterm{\vvel_n}}_{2}^2
\geq\frac{n}{\sqrt{2+\frac{2}{\rey^4}}}\norm{g_n}_{2}^2,
\]
which corresponds to \eqref{est:counterexample} with $C=(2+2\rey^{-4})^{-1/4}$.
\end{proof}

To conclude Theorem \ref{thm:counterexample.tp},
we construct a suitable right-hand side $f$ as a Fourier series 
defined by means of the sequence $(g_n)$,
and we show that the corresponding solutions $\np{\vvel_n,\vpres_n}$ 
cannot constitute a Fourier series that is a solution to \eqref{sys:Oseen.rot.tp}
with the desired properties.

\begin{proof}[Proof of Theorem \ref{thm:counterexample.tp}]
Let us focus on case~\ref{it:counterexample.tp.i}~at first.
Consider the sequences $(s_n)\subset\perf\Z$ and $(g_n)\subset\LR{2}(\R^3)^3$ 
from Theorem \ref{thm:counterexample}
and the corresponding sequence of velocity fields $\np{\vvel_n}$. 
We may assume that $s_n\neq s_m$ if $n \neq m$ 
and, by a renormalization, that $\norm{g_n}_{2}=n^{-3/2}$.
Then we can define $f\in \AR(\torus;\LR{2}(\R^3)^3)$ by
\[
f(t,x)=\sum_{k\in\Z} f_k(x)\e^{i\perf k t},
\qquad
f_k=
\begin{cases}
g_n &\tif \perf k= s_n \text{ for some }n\in\N, \\
0 &\tif \perf k\neq s_n \text{ for all }n\in\N.
\end{cases}
\]
Now assume that there exists a solution 
$\np{\uvel,\upres}\in\LRloc{1}(\torus\times\R^3)^{3+1}$ 
to \eqref{sys:Oseen.rot.tp} 
such that $\pt\uvel+\tay\rotterm{\uvel},\,
\grad^2\uvel,\,
\partial_1\uvel
\in\AR(\torus;\LR{2}(\R^3)^3)$.
Let $\np{\uvel_k,\upres_k}\coloneqq\np{\FT_\torus\nb{\uvel}(k),\FT_\torus\nb{\upres}(k)}$, 
$k\in\Z$, 
be the Fourier coefficients of $\np{\uvel,\upres}$.
Then
\[
i\perf k\uvel_k
+\tay\rotterm{\uvel_k}
- \Delta \uvel_k
+ \grad \upres_k
= f_k,
\quad
\Div\uvel_k
=0 
\qquad \tin\R^3
\]
and $i\perf k\uvel_k+\tay\rotterm{\uvel_k},\,
\grad^2\uvel_k,\,
\partial_1\uvel_k
\in\LR{2}(\R^3)$.
The uniqueness statement from Theorem \ref{thm:Oseen.rot.res.R3}
now implies
that
$i\perf\uvel_k+\tay\rotterm{\uvel_k}=0$ if $\perf k\neq s_n$ for all $n\in\N$,
and that 
\[
i\perf\uvel_k+\tay\rotterm{\uvel_k}=is_n\vvel_n+\tay\rotterm{\vvel_n}
\]
if $\perf k= s_n$ for some $n\in\N$.
In virtue of inequality \eqref{est:counterexample}, 
we thus conclude
\[
\begin{aligned}
\sum_{k\in\Z}\norm{i\perf k\uvel_k+\tay\rotterm{\uvel_k}}_{2}
&=\sum_{n=1}^\infty\norm{is_n\vvel_n+\tay\rotterm{\vvel_n}}_{2}
\\
&\geq C \sum_{n=1}^\infty n^{1/2} \norm{g_n}_{2}=\infty.
\end{aligned}
\]
Hence, the left-hand side is not summable
and $\pt\uvel+\tay\rotterm\uvel\not\in\AR(\torus;\LR{2}(\R^3)^3)$,
which contradicts the assumption 
and completes the proof of \ref{it:counterexample.tp.i}

For \ref{it:counterexample.tp.ii}~we proceed in the same way but renormalize $g_n$ such that
$\norm{g_n}_{2}=n^{-1}$.
By Plancherel's theorem we then obtain $f\in\LR{2}(\torus\times\R^3)^3$
and
\[
\begin{aligned}
\sum_{k\in\Z}\norm{i\perf k\uvel_k+\tay\rotterm{\uvel_k}}_{2}^2
&=\sum_{n=1}^\infty\norm{is_n\vvel_n+\tay\rotterm{\vvel_n}}_{2}^2
\\
&\geq C \sum_{n=1}^\infty n \norm{g_n}_{2}^2=\infty.
\end{aligned}
\]
This shows $\pt\uvel+\tay\rotterm\uvel\not\in\LR{2}(\torus\times\R^3)^3$
and completes the proof.
\end{proof}

\end{document}